\algrenewcommand\algorithmiccomment[1]{\hfill{\color{gray}$\triangleright$~#1}}
\algrenewcommand\algorithmicindent{1em}%
\newcommand{\algmargin}{\the\ALG@thistlm}
\newlength{\whilewidth}
\algnewcommand{\parState}[1]{\State%
  \parbox[t]{\dimexpr\linewidth-\algmargin}{\strut #1\strut}}
\definecolor{c0}{HTML}{641a80}
\definecolor{c1}{HTML}{b73779}
\definecolor{mplb}{HTML}{1f77b4}
\definecolor{mplo}{HTML}{ff7f0e}
\definecolor{mplg}{HTML}{2ca02c}
\crefname{assumption}{Assumption}{Assumptions}
\crefname{problem}{Problem}{Problems}
\crefname{section}{Section}{Sections}
\crefname{subsection}{Subsection}{Subsections}
\renewcommand{\vec}{\bm}
\newcommand{\llbracket}{[\![}
\newcommand{\rrbracket}{]\!]}
\newcommand{\HODLR}{\operatorname{HODLR}}
\newcommand{\HSS}{\operatorname{HSS}}
\newcommand{\SSS}{\operatorname{SSS}}
\newcommand{\UBLR}{\operatorname{BLR^2}}
\newcommand{\Gaussian}{\operatorname{Gaussian}}
\newcommand{\EE}{\operatorname{\mathbb{E}}}
\newcommand{\R}{\mathbb{R}}
\newcommand{\T}{\mathsf{T}}
\newcommand{\F}{\mathsf{F}}
\newcommand{\blockdiag}{\operatorname{blockdiag}}
\newcommand{\rank}{\operatorname{rank}}
\newcommand{\di}[1]{{\color{orange}{DH: #1}}}
\newcommand{\Cam}[1]{{\color{blue}{Cam: #1}}}
\newcommand{\Tyler}[1]{{\color{teal}{\textbf{Tyler:}} #1}}
\newcommand{\rev}[1]{\textcolor{black}{#1}}
\newcommand\numberthis{\addtocounter{equation}{1}\tag{\theequation}}
\crefname{hypothesis}{Hypothesis}{Hypotheses}
\renewcommand\funding[1]{{\bfseries Funding:} #1}
\title{Quasi-optimal Hierarchically semi-separable matrix approximation\thanks{%
\funding{Diana Halikias was supported by NSF grant DGE-2139899. Cameron Musco was supported in part by NSF grants CFF-2046235 and CCF-2427363. }}}
\author{Noah Amsel\thanks{New York University (\{\href{mailto:noah.amsel@nyu.edu}{noah.amsel},\href{mailto:fd2135@nyu.edu}{fd2135},\href{mailto:cmusco@nyu.edu}{cmusco}, \href{mailto:dup210@nyu.edu}{dup210}\}@nyu.edu)}
\and Tyler Chen\thanks{JP Morgan Chase (\email{tyler.chen@jpmchase.com})}
\and
Feyza Duman Keles\footnotemark[2]
\and
Diana Halikias\thanks{Cornell University (\email{dh736@cornell.edu})}
\and
Cameron Musco\thanks{University of Massachusetts Amherst (\email{cmusco@cs.umass.edu})}
\and
Christopher Musco\footnotemark[2]
\and
 David Persson\footnotemark[2]~\thanks{Flatiron Institute (\email{dpersson@flatironinstitute.org})}
}
\begin{document}

\maketitle

\begin{abstract} We present a randomized algorithm for producing a quasi-optimal hierarchically semi-separable (HSS) approximation to an $N\times N$ matrix $\bm{A}$ using only matrix-vector products with $\bm A$ and $\bm A^\T$. 
We prove that, using $O(k \log(N/k))$ matrix-vector products and ${O}(N k^2 \log(N/k))$ additional runtime, the algorithm  returns an HSS matrix $\bm{B}$ with rank-$k$ blocks whose expected Frobenius norm error $\mathbb{E}[\|\bm{A} - \bm{B}\|_\F^2]$ is at most $O(\log(N/k))$ times worse than the best possible approximation error by an HSS rank-$k$ matrix. In fact, the algorithm we analyze in a simple modification of an empirically effective method proposed by [Levitt \& Martinsson, SISC 2024].
As a stepping stone towards our main result, we prove two results that are of independent interest: a similar guarantee for a variant of the algorithm  which  accesses $\bm{A}$'s entries directly, and explicit error bounds for near-optimal subspace approximation using \textit{projection-cost-preserving sketches}.
To the best of our knowledge, our analysis constitutes the first polynomial-time quasi-optimality result for HSS matrix approximation, both in the explicit access model and the matrix-vector product query model.
\end{abstract}
\begin{keywords}
randomized approximation of matrices; rank-structured matrices; hierarchically block separable matrix; hierarchically semi-separable matrix; randomized SVD; matrix-vector query
\end{keywords}
\begin{MSCcodes}
65N22, 65N38, 15A23, 15A52
\end{MSCcodes}



\section{Introduction}
Recently, there has been a surge of interest in structured matrix approximation from matrix-vector products \cite{LiuXingGuo:2021,BakshiClarksonWoodruff:2022,BastonNakatsukasa:2022,HalikiasTownsend:2023,park_nakatsukasa_23,AmselChenDumanKelesHalikiasMuscoMusco:2024,ChenDumanKelesHalikiasMuscoMuscoPersson:2025,PearceYesypenkoLevittMartinsson:2025}.
Concretely, given an $N\times N$ matrix $\bm A$ and a class of structured matrices $\mathcal S$ (rank-$k$, sparse, butterfly, etc.), the goal is to find a matrix $\bm B \in \mathcal S$ approximating $\bm{A}$ in the sense that 
\begin{equation}\label{eqn:approx_target}
    \| \bm A - \bm B\|_\F \leq \Gamma \cdot \min_{\bm S \in \mathcal S} \|\bm A - \bm S \|_\F,
\end{equation}
for some approximation factor $\Gamma$.\footnote{If $\bm A$ is itself in $\mathcal{S}$, then this problem is referred to as \emph{matrix recovery}. Approximation guarantees for other norms are also of interest.} Moreover, we wish to do so given only \emph{implicit access} to $\vec{A}$ through black-box matrix-vector product queries $\bm{x}\mapsto \bm{A}\bm{x}$ and $\bm{x} \mapsto \bm{A}^\T\bm{x}$, both of which will henceforth be referred to as ``matvec queries to $\bm{A}$.''
Perhaps the most well-studied version of this problem is when $\mathcal{S}$ is the class of low-rank matrices, for which methods like (block) Krylov iteration and the randomized SVD can be used \cite{HalkoMartinssonTropp:2011, BakshiClarksonWoodruff:2022,BakshiNarayanan:2023,SimchowitzElAlaouiRecht:2018}. 
Recent work has also considered approximation algorithms for classes such as diagonal matrices \cite{BekasKokiopoulouSaad:2007,TangSaad:2011,BastonNakatsukasa:2022,DharangutteMusco:2023}, sparse matrices \cite{CurtisPowellReid:1974,ColemanMore:1983, ColemanCai:1986,WimalajeewaEldarVarshney:2013,DasarathyShahBhaskarNowak:2015,park_nakatsukasa_23,AmselChenDumanKelesHalikiasMuscoMusco:2024}, and beyond \cite{WatersSankaranarayananBaraniuk:2011,LiYang:2017,SchaferKatzfussOwhadi:2021,LiuXingGuo:2021, kapralov2023toeplitz}. 

\subsection{Hierarchical low-rank matrices} 
In the computational sciences, the class of matrices with \emph{hierarchical low-rank structure} is of particular interest \cite{Martinsson:2008,LinLuYing:2011}. Applications include solvers for differential and integral equations~\cite{BebendorfHackbusch:2003,XiaChandrasekaranGuLi:2010,XiaChandrasekaranGuLi:2010superfast,PouransariCoulierDarve:2017,Martinsson:2020, MasseiMazzaRobol:2019, KayeGolez:2021}, estimation of inverse covariance matrices~\cite{AmbisakaranForemanGreeengardHoggONeil:2016,Litvinenko2019}, Hessian approximation  \cite{Geoga2019,AmbartsumyanBoukaramBui-Thanh:2020}, and tasks in scientific machine learning such as operator learning \cite{BoulleTownsend:2022,BoulleHalikiasTownsend:2023}. 
Efficient data structures for hierarchical matrices enable many  linear algebraic tasks, such as computing matrix-vector products and solving linear systems, to be performed rapidly, often in time that is linear or nearly linear in the matrix dimension~\cite{BormReimer:2013,BallaniKressner:2016,MasseiRobolKressner:2020}.

As such, algorithms for computing hierarchical low-rank approximations are a central topic of research~\cite{Martinsson:2008, LinLuYing:2011, BoukaramTurkiyyahKeyes:2019, CaiHuangChowXi:2024,ChenDumanKelesHalikiasMuscoMuscoPersson:2025}. 
The matvec query model is especially natural for many applications of these methods. 
For instance, integral operators, which can  be accurately approximated by hierarchical matrices~\cite{BebendorfHackbusch:2003}, admit fast matrix-vector multiplication algorithms via the Fast Multipole Method~\cite{GreengardRokhlin:1987} or Barnes–Hut algorithm~\cite{BarnesHut:1986}. Matvec queries are also the natural access model in operator learning applications, where such queries correspond to data available from simulations or experiments~\cite{LuJinPangZhangKarniadakis:2021,  LiKovachkiLiuBhattacharyaStuartAnandkumar:2021, BoulleEarlsTownsend:2022,KovachkiLanthalerStuart:2024}. We refer the reader to \cite{BoulleTownsend:2024} for more details.

In this paper, we specifically study algorithms for solving \cref{eqn:approx_target} where $\mathcal{S}$ is the widely used class of \textit{hierarchically semi-separable} (HSS) matrices (see \Cref{def:HSS}).
Like the broader class of hierarchical off-diagonal low-rank (HODLR) matrices, HSS matrices are recursively partitioned into low-rank off-diagonal blocks and  on-diagonal blocks.
However, the column and row spaces of the low-rank blocks of HSS matrices at different levels of the hierarchy satisfy an additional nestedness property.
This results in a format that is even more compact than that of HODLR matrices.
At the same time, the added structure makes algorithms for HSS approximation substantially harder to design and analyze. Indeed, while there has been significant work on algorithms for obtaining HSS approximations (see \Cref{sec:prior_work} for details), even when the entries of $\vec{A}$ can be accessed explicitly, we are not aware of any polynomial-time algorithms that achieve a strong multiplicative error bound as in \cref{eqn:approx_target}. In the setting where $\vec{A}$ is only accessible through  matrix-vector products, we are only aware of one existing algorithm by  Levitt and Martinsson \cite{LevittMartinsson:2024}. However, while the algorithm works well in practice, as noted in~\cite[Remark 4.3]{LevittMartinsson:2024} it is not well-understood theoretically.

\subsection{Contributions}
Our main contribution is to show that a modification of the Levitt-Martinsson algorithm does in fact produce a quasi-optimal HSS approximation to an arbitrary $N\times N$ matrix $\bm{A}$. 
Specifically, in \Cref{theorem:HSS_matvec}, we show that, using $O(k\log(N/k))$ matrix-vector products with $\bm{A}$ and $O(Nk^2\log(N/k))$ additional time, we can produce an HSS rank-$k$ matrix $\bm{B}$ whose expected error. $\mathbb{E}[\|\bm{A} - \bm{B}\|_\F^2]$, is at most a $O(\log(N/k))$ multiplicative factor worse than the error of the best possible HSS rank-$k$ approximation error to $\bm{A}$. Via a simple application of Markov's inequality, our method thus solves \cref{eqn:approx_target} with probability $1-\delta$ for $\Gamma = O(\log(N/k)/\delta)$.



In~\Cref{section:nearoptimal}, we start by analyzing a general greedy approach for HSS approximation, which captures our variant of the Levitt-Martinson algorithm as a special case. This approach computes a sequence of approximations from the simpler sequentially semi-separable (SSS) matrix class. 
Roughly, we can compute one near-optimal SSS approximation for each level of the HSS hierarchy after fixing all lower levels, and argue that doing so leads to an HSS approximation that is near-optimal as well.
When implemented using explicit access to $\bm{A}$, this greedy algorithm is closely related to existing methods for HSS approximation, including those used in \texttt{hm-toolbox} \cite{MasseiRobolKressner:2020, XiaChandrasekaranGuLi:2010, yaniv2025construction}. 
We prove in \Cref{theorem:HSSgreedy} that such an approach yields a $2\log_2(N/k)$-approximation using $O(N^2k)$ arithmetic operations, which  
we believe is the first polynomial time result for relative error HSS-approximation in the explicit access model. 

In~\Cref{section:matvec}, we extend our analysis of the greedy algorithm to work in the implicit matvec query model. At a high-level, akin to the randomized SVD \cite{HalkoMartinssonTropp:2011}, $\vec{A}$ is multiplied by blocks of $O(k)$ random vectors (i.e., \emph{sketching matrices}) to recover rank-$k$ approximations to various blocks.   
We use the  \emph{block nullification} method from \cite{LevittMartinsson:2024} to simulate matvecs with multiple individual blocks of $\bm A$ given the result of a single random sketching matrix.
However,  unlike ~\cite{LevittMartinsson:2024},
we sample \emph{a fresh sketching matrices} for each level of the matrix's hierarchy. Doing so ensures the independence of certain matrices that arise in our error analysis, and is critical to proving theoretical approximation guarantees. 
A key component of our analysis is to analyze low-rank approximation using the \emph{projection-cost-preserving sketch} (PCPS) framework of \cite{CohenElderMusco:2015, musco2020projection}. PCPSs avoid the need for adaptive transpose products with individual blocks, which are required for more common low-rank approximation algorithms like the randomized SVD \cite{HalkoMartinssonTropp:2011}, but which are not easily simulated by block-nullification.
In our analysis, we establish new explicit error bounds for PCPSs that may be of independent interest.

In~\cref{sec:experiments}, we include numerical experiments to  illustrate the impact of using fresh sketches, and compare our algorithm with \cite{LevittMartinsson:2024}. While fresh sketches do improve accuracy, gains are relatively minor. Since \cite{LevittMartinsson:2024} uses only $O(k)$ matvecs, this leaves open the exciting possibility of removing the logarithmic factor from our $O(k\log(N/k)$ query complexity bound, while matching the same strong theoretical guarantees.

Finally, in \Cref{sec:BLR}, we generalize our results to the uniform block low-rank ($\UBLR$) class~\cite{AshcraftButtariMary:2021}, which has recently received attention in the matvec query model \cite{PearceYesypenkoLevittMartinsson:2025}. 
Our analysis for this class suggests how our techniques could be further  modified for matrices with arbitrary hierarchical partitions.

\subsection{Past work}
\label{sec:prior_work}
Most closely related to the present work is  \cite{ChenDumanKelesHalikiasMuscoMuscoPersson:2025}, which analyzes a matvec query algorithm for HODLR matrix approximation. 
The algorithm from \cite{ChenDumanKelesHalikiasMuscoMuscoPersson:2025} can be viewed as a robust version of widely used ``peeling'' methods for recovering matrices with exact HODLR structure from matrix-vector products \cite{LinLuYing:2011,Martinsson:2016,LevittMartinsson:2024a}. It obtains a rank-$k$ HODLR approximation $\vec{B}$ that is within a $(1+\varepsilon)$ factor of optimal using $O(k\log^4(N)/\varepsilon^3)$ matrix-vector products. Since the class of HSS matrices is a subset of HOLDR matrices, a simple triangle inequality argument implies that the best HSS rank-$k$ approximation to $\vec{B}$ is within a $(2+2\epsilon)$ factor of the optimal HSS approximation. In principal, this yields an approach to solving \cref{eqn:approx_target} but \rev{(i)} such an approach has worse matvec complexity than our method, which uses $O(k\log(N/k))$ queries and (ii) it is not computationally efficient, since it is not clear how to compute the best HSS approximation to the (explicit) HODLR matrix $\vec{B}$.

Also related to our work is research on algorithms for HSS approximation is the setting where we have explicit access to $\vec{A}$'s entries \cite{Borm:2010,MasseiRobolKressner:2020,kressnermasseirobol:2019, XiXiaCauleyBalakrishnan:2014}. 
Existing algorithms  approximate  low-rank submatrices up to a given fixed tolerance (e.g., using the truncated SVD).
As such, they provide additive error bounds with respect to this tolerance. 
For instance, \cite[Theorem 4.7]{kressnermasseirobol:2019} guarantees spectral norm error $O(\sqrt{N}\epsilon)$, where $\epsilon$ is the truncation tolerance for each low-rank block. As far as we are aware, such bounds do not translate to relative error bounds like those sought in \cref{eqn:approx_target}, which compare to the best possible HSS approximation of a given rank. 

There are considerably fewer results on HSS approximation in the matvec query model. 
In fact,  the only truly black-box matvec query algorithm we are aware of is that of \cite{LevittMartinsson:2024}, which builds on previous methods that use both matvec queries and a linear number of entry evaluations (i.e., are ``partially matrix-free'') ~\cite{Martinsson:2008,Martinsson:2011,gorman2019robust}. 
Efficient algorithms have also been developed for HSS subclasses, such as when $\vec A$ arises from a boundary integral operator in the plane~\cite{martinsson2005fast} and when $\vec A$ is exactly HSS with its largest off-diagonal blocks having rank exactly $k$~\cite{HalikiasTownsend:2023}.

\section{Background and Notation}
This section introduces necessary notation and provides definitions of the matrix classes (HSS and SSS) that we analyze. 

We denote the best rank-$k$ approximation to a matrix $\bm M$ in any unitarily invariant norm as $\llbracket\bm{M}\rrbracket_k$. 
This approximation can be obtained by the truncated SVD as described by the Eckart--Young Theorem.  
We write the transpose of $\bm{M}$  as $\bm M ^\T $ and the Moore-Penrose pseudoinverse as $\bm{M}^{\dagger}$. The Frobenius  and spectral norms are denoted by $\| \bm M \|_\F$ and $\|\bm{M}\|_2$, respectively. Throughout,  $\EE[\,\cdot\,]$ denotes expectation and $\text{Gaussian} (m, n)$ denotes an $m \times n$ matrix whose entries are independent standard normal random variables. 
We denote by $\blockdiag(\bm{B}_1,\ldots,\bm{B}_d)$ the block-diagonal matrix with (possibly rectangular) diagonal blocks $\bm{B}_1, \ldots, \bm{B}_d$.

\subsection{Notation for sub-matrices}
We  frequently interpret matrices as block matrices of varying sizes. The following definition introduces the notation that we use to reference individual blocks of different dimensions.

\begin{definition}\label{def:partitioning}
For non-negative integers $\ell,k$, consider a matrix $\bm{B} \in \mathbb{R}^{2^{\ell + 1} k \times 2^{\ell + 1} k}$. We use the following notation to write $\bm{B}$ as a $2^{\ell} \times 2^{\ell}$ block matrix
\begin{equation*}
    \bm{B} = \begin{bmatrix} 
        \bm{B}_{1,1} & \cdots & \bm{B}_{1,2^\ell} \\ 
        \vdots &  \ddots & \vdots \\
        \bm{B}_{2^{\ell},1} & \cdots & \bm{B}_{2^{\ell},2^\ell} \end{bmatrix}, \quad \text{where } \bm{B}_{i,j} \in \mathbb{R}^{2k \times 2k}.
\end{equation*}
We use alternative notation to write $\bm{B}$ as a $2^{\ell + 1} \times 2^{\ell + 1}$ block matrix
\begin{equation*}
    \bm{B} = \begin{bmatrix} 
        \widehat{\bm{B}}_{1,1} & \cdots & \widehat{\bm{B}}_{1,2^{\ell+1}} \\ 
        \vdots &  \ddots & \vdots \\
        \widehat{\bm{B}}_{2^{\ell+ 1},1} & \cdots & \widehat{\bm{B}}_{2^{\ell+ 1},2^{\ell+1}} \end{bmatrix}, \quad \text{where } \widehat{\bm{B}}_{i,j} \in \mathbb{R}^{k \times k}.
\end{equation*}
Note that we have the relationship
\begin{equation*}
    \bm{B}_{i,j} = \begin{bmatrix} \widehat{\bm{B}}_{2i-1,2j-1} & \widehat{\bm{B}}_{2i-1,2j}\\
    \widehat{\bm{B}}_{2i,2j-1} & \widehat{\bm{B}}_{2i,2j} \end{bmatrix}.
\end{equation*}
\end{definition}

Our analysis will repeatedly involve certain submatrices involving all off-diagonal blocks in a given block row or column.

\vfill
\begin{definition}\label{def:HSScolrow}
    Let  $\bm{B} \in \mathbb{R}^{2^{\ell + 1}k \times 2^{\ell + 1}k}$ be partitioned as in~\Cref{def:partitioning}. The $i^{\text{th}}$ HSS block-column and HSS block-row of $\bm{B}$ are respectively defined as 
    \begin{align*}
        c_i(\bm{B}) = 
        \begin{bmatrix} 
        \bm{B}_{1,i} \\ \vdots \\ \bm{B}_{i-1,i} \\ \bm{B}_{i+1,i} \\ \vdots \\\bm{B}_{2^{\ell},i}
        \end{bmatrix}
        \quad \text{and} \quad r_i(\bm{B}) =
        \begin{bmatrix} 
        \bm{B}_{i,1} & \cdots & \bm{B}_{i,i-1} & \bm{B}_{i,i+1} & \cdots & \bm{B}_{i,2^{\ell}} 
        \end{bmatrix}.
    \end{align*}
\end{definition}

\subsection{Rank-structured matrices}\label{section:HSS}

Following \cite{LevittMartinsson:2024}, we define HSS matrices using a so-called \emph{telescoping factorization}.\footnote{For a discussion on the equivalence between the telescoping factorization and the perhaps more common ``data-sparse'' definition of HSS matrices, we refer the reader to \cite[Section 3]{CasulliKressnerRobol:2024}.} 
\begin{definition}\label{def:HSS}
Consider a matrix $\bm{B}\in\mathbb{R}^{2^{L+1}k\times 2^{L+1}k}$.
We say that $\bm{B}\in \HSS(L,k)$ \rev{if there exist block-diagonal matrices 
\begin{equation*}
    \vec{U}^{(\ell)}, \vec{V}^{(\ell)} \text{ for } \ell = 1,\ldots, L, \quad \vec{D}^{(\ell)} \text{ for } \ell = 0,\ldots,L
\end{equation*}
such that}
\begin{align}
\begin{split}\label{eq:telescoping_factorization}
    \bm{B}^{(L+1)} &=\bm{B},\\
    \bm{B}^{(\ell+1)} &= \bm{U}^{(\ell)} \bm{B}^{(\ell)} (\bm{V}^{(\ell)})^\T + \bm{D}^{(\ell)} \quad \text{for } \ell = 1,\ldots,L, \quad \text{and}\\
    \bm{B}^{(1)} &= \bm{D}^{(0)},
    \end{split}
\end{align}
where $\bm{D}^{(0)} \in\mathbb{R}^{2k\times 2k}$ and, for $\ell = 1,\ldots,L$, 
\begin{align*}
    \bm{U}^{(\ell)} &= \blockdiag\left(\bm{U}_1^{(\ell)},\ldots,\bm{U}_{2^\ell}^{(\ell)}\right) \quad \text{where } \bm{U}_i^{(\ell)} \in \mathbb{R}^{2k\times k} \text{ has orthonormal columns,}\\
    \bm{V}^{(\ell)} &= \blockdiag\left(\bm{V}_1^{(\ell)},\ldots,\bm{V}_{2^\ell}^{(\ell)}\right)  \quad \text{where } \bm{V}_i^{(\ell)} \in \mathbb{R}^{2k\times k} \text{ has orthonormal columns,}\\
    \bm{D}^{(\ell)} &= \blockdiag\left(\bm{D}_1^{(\ell)},\ldots,\bm{D}_{2^\ell}^{(\ell)}\right) \quad \text{where } \bm{D}_i^{(\ell)} \in \mathbb{R}^{2k \times 2k}.
\end{align*}
\end{definition}

We note that the telescoping factorization in~\Cref{def:HSS} encodes the recursive nature of an HSS matrix. That is, if $\bm B = \bm B^{(L+1)}$ is an $N \times N$ matrix in $\HSS(L, k)$, then  $\bm B^{(L)}$ is an $N/2 \times N/2$ matrix in  $\HSS(L-1, k)$, and so on.
One level ($\ell=2$) of the telescoping decomposition is illustrated below.
\rev{The telescoping factorization also implies that for any $i\neq j$, the $(i,j)$ block of $\vec{B}^{(\ell+1)}$ has rank at most $k$, with range spanned by the columns of $\vec{U}^{(\ell)}_{i}$ and co-range spanned by the columns of $\vec{V}^{(\ell)}_j$, for all $\ell = 1,\ldots,L$.}
This immediately implies the HSS block-rows and block-columns are rank-$k$.

\begin{minipage}{\textwidth}\centering

\begin{tikzpicture}[scale=.25]
\def\K{4}
\def\Km{3}

\draw[fill=black!40] (0,0) rectangle ({2*\K},{2*\K});
\foreach \x in {1,...,\Km}{
    \draw[dotted,line width=.2px] (0,{2*\x}) -- ({2*\K},{2*\x});
    \draw[dotted,line width=.2px] ({2*\x},0) -- ({2*\x},{2*\K});
}

\node[] at ({2*\K+1.5},{\K+\K/2}) [] {=};

\begin{scope}[shift = {(2*\K+3,0)}]
\foreach \x in {1,...,\Km}{
    \draw[dotted,line width=.2px] (0,{2*\x}) -- (\K,{2*\x});
    \draw[dotted,line width=.2px] (\x,0) -- (\x,{2*\K});
}
\draw[] (0,0) rectangle (\K,{2*\K});
\foreach \x in {1,...,\K}{
    \draw[fill=black!40] (\x-1,{2*\K-2*(\x-1)}) rectangle (\x,{2*\K-2*\x});
}

\begin{scope}[shift = {(\K+1,\K)}]
\draw[fill=black!40] (0,0) rectangle (\K,\K);
\foreach \x in {1,...,\Km}{
    \draw[dotted,line width=.2px] (0,\x) -- (\K,\x);
    \draw[dotted,line width=.2px] (\x,0) -- (\x,\K);
}
\end{scope}

\begin{scope}[shift = {(2*\K+2,\K)}]
\foreach \x in {1,...,\Km}{
    \draw[dotted,line width=.2px] (0,\x) -- ({2*\K},\x);
    \draw[dotted,line width=.2px] ({2*\x},0) -- ({2*\x},\K);
}
\draw[] (0,0) rectangle ({2*\K},\K);
\foreach \x in {1,...,\K}{
    \draw[fill=black!40] ({2*\K-2*(\x-1)},\x-1) rectangle ({2*\K-2*\x,\x});
}
\end{scope}
\end{scope}

\node[] at ({6*\K+6.5},{\K+\K/2}) [] {+};

\begin{scope}[shift = {(6*\K+8,0)}]
\foreach \x in {1,...,\Km}{
    \draw[dotted,line width=.2px] (0,{2*\x}) -- ({2*\K},{2*\x});
    \draw[dotted,line width=.2px] ({2*\x},0) -- ({2*\x},{2*\K});
}
\draw[] (0,0) rectangle ({2*\K},{2*\K});
\foreach \x in {1,...,\K}{
    \draw[fill=black!40] ({2*(\x-1)},{2*\K-2*(\x-1)}) rectangle ({2*\x},{2*\K-2*\x});
}

\end{scope}

\begin{scope}[shift= {(0,2*\K+1.5)}]
\node[] at (4.5,0) [] {$\bm{B}^{(\ell+1)}$};
\node[] at ({2*\K+1.5},0) [] {=};
\node[] at (13.5,0) [] {$\bm{U}^{(\ell)}$};
\node[] at (18.5,0) [] {$\bm{B}^{(\ell)}$};
\node[] at (25,0) [] {$(\bm{V}^{(\ell)})^\T$};
\node[] at ({6*\K+6.5},0) [] {+};
\node[] at (36.5,0) [] {$\bm{D}^{(\ell)}$};
\end{scope}

\end{tikzpicture}
\vspace{1em}

\end{minipage}

\begin{remark}
\Cref{def:HSS} requires that $\vec B$ has dimension $N = 2^{L+1}k$, so $L = \log_2(N/k) - 1$. \rev{This is to simplify notation. More generally, HSS matrices are defined in terms of a binary partition tree, where each node in the tree corresponds to an index set. In \Cref{def:HSS}, we  assume that the binary tree is a perfect binary partition tree with leaf nodes containing $2k$ indices. This ensures that all basis matrices $\bm{U}_i^{(\ell)}$ and $\bm{V}_i^{(\ell)}$ have dimensions $2k \times k$, and the diagonal blocks $\bm{D}_i^{(\ell)}$ have dimensions $2k \times 2k$, independently of $i$ and $\ell$, which simplifies notation. However, our analysis easily extends to arbitrary $N$ and diagonal block size, corresponding to arbitrary partition trees.}
We discuss this and the relation to other hierarchical classes in  \Cref{sec:other_hierarchical}.
\end{remark}

Our algorithm for HSS approximation will build on an algorithm for \emph{sequentially semi-separable} (SSS) matrix approximation.
SSS matrices are the non-recursive version of HSS matrices and frequently appear in the context of numerical solutions of integral operators \cite{ChandrasekaranDewildeGuPalsvanderVeen:2002,AshcraftButtariMary:2021,gillman2012direct,martinsson2005fast}.

\begin{definition}\label{def:HSS_simple}
Consider a matrix $\bm{B} \in \mathbb{R}^{2^{L + 1}k \times 2^{L + 1}k}$.
We say $\bm{B} \in \SSS(L,k)$ if it can be written as 
$ \bm{B} = \bm{U} \bm{X} \bm{V}^\T + \bm{D},$ where $\bm{X}\in\mathbb{R}^{2^{L} k \times 2^{L}k}$, and
\begin{align*}
    \bm{U} &= \blockdiag\left(\bm{U}_1,\ldots,\bm{U}_{2^{L}}\right) \quad \text{where } \bm{U}_i \in \mathbb{R}^{2k \times k} \text{ has orthonormal columns,}\\
     \bm{V} &= \blockdiag\left(\bm{V}_1,\ldots,\bm{V}_{2^{L}}\right) \quad \text{where } \bm{V}_i \in \mathbb{R}^{2k\times k} \text{ has orthonormal columns, and}\\
    \bm{D} &= \blockdiag\left(\bm{D}_1,\ldots,\bm{D}_{2^{L}}\right) \quad \text{where } \bm{D}_i \in \mathbb{R}^{2k\times 2k}.
\end{align*}
\end{definition}
SSS matrices are a special case of the more general class of uniform block low-rank ($\UBLR$) matrices, which have appeared in recent works \cite{AshcraftButtariMary:2021,PearceYesypenkoLevittMartinsson:2025}.
We discuss how the algorithms in this paper can be extended to such matrices in \cref{sec:BLR}.

\section{Near-optimal greedy HSS approximation}\label{section:nearoptimal}
We begin by analyzing a generic greedy algorithm (\Cref{alg:HSS_greedy_meta}) for computing an HSS approximation of an arbitrary matrix. This algorithm is closely related to many  existing algorithms \cite{LevittMartinsson:2024,Martinsson:2008,Martinsson:2011,XiaChandrasekaranGuLi:2010,XiaChandrasekaranGuLi:2010superfast}.  
It builds a telescoping factorization of the HSS approximation as in Definition \ref{def:HSS}, starting at  level $L$ and proceeding until level 1. At each level, the algorithm solves an SSS approximation problem (see \Cref{def:HSS_simple}) to obtain $\vec A^{(\ell)}$ from $\vec A^{(\ell+1)}$. It does so by choosing the factors $\bm{U}_i^{(\ell)}$ and $\bm{V}_i^{(\ell)}$ to be the best (or near-best) subspaces for the corresponding block-rows and block-columns of $\vec A^{(\ell+1)}$ \emph{independently} of one another.
$\vec D_i^{(\ell)}$ is chosen as a near-optimal diagonal block given these factors. We argue that this approach yields a quasi-optimal HSS approximation with quasi-optimality constant $O(L)$ where $L = \log_2(N/k)-1$ is the number of levels.

\begin{algorithm}
\caption{Greedy HSS approximation (meta-algorithm)}
\label{alg:HSS_greedy_meta}
\textbf{input:} Matrix $\bm{A} \in \mathbb{R}^{N \times N}$ where $N=2^{(L+1)}k$, rank parameter $k$.\\
\textbf{output:} An HSS matrix $\bm{B} \in \HSS(L,k)$
\begin{algorithmic}[1]
\State Set $\bm{A}^{(L+1)} = \bm{A}$
\For{$\ell = L, \ldots, 1$}
\State \rev{Partition $\bm{A}^{(\ell+1)} = \{ \bm{A}^{(\ell+1)}_{i,j} \}_{i,j}^{2^\ell}$ into blocks of size $\bm{A}_{i,j}^{(\ell+1)} \in \mathbb{R}^{2k \times 2k}$.}
\For{$i = 1,2,\ldots,2^\ell$}\label{line:begininner}
    \State Compute $\bm{U}_i^{(\ell)}\in\mathbb{R}^{2k\times k}$ with orthonormal columns approximately minimizing 
    \[
    \big\|r_i(\bm{A}^{(\ell+1)}) - \bm{U}_i^{(\ell)} (\bm{U}_i^{(\ell)})^\T r_i(\bm{A}^{(\ell+1)})\big\|_\F
    \] 
    \State Compute $\bm{V}_i^{(\ell)}\in\mathbb{R}^{2k\times k}$ with orthonormal columns approximately minimizing
    \[
    \big\| c_i(\bm{A}^{(\ell+1)}) - c_i(\bm{A}^{(\ell+1)}) \bm{V}_i^{(\ell)} (\bm{V}_i^{(\ell)})^\T \big\|_\F
    \]
    \State Compute $\bm{D}_i^{(\ell)}\in\mathbb{R}^{2k\times 2k}$ approximately minimizing 
    \[
    \big\| (\bm{A}_{i,i}^{(\ell+1)} - \bm{D}_i^{(\ell)}) - \bm{U}_i^{(\ell)} (\bm{U}_i^{(\ell)})^\T (\bm{A}_{i,i}^{(\ell+1)} - \bm{D}_{i}^{(\ell)})\bm{V}_i^{(\ell)} (\bm{V}_i^{(L)})^\T  \big\|_\F
    \]\label{line:diagonal}
\EndFor\label{line:endinner}
\State Set $\bm{A}^{(\ell)} = (\bm{U}^{(\ell)})^\T \left(\bm{A}^{(\ell+1)} - \bm{D}^{(\ell)}\right)\bm{V}^{(\ell)}$ \label{line:Aell}
\EndFor
\State $\bm{D}^{(0)} = \bm{A}^{(1)}$
\State \Return $\vec{B}$, a telescoping factorization defined as in \Cref{eq:telescoping_factorization}: \[\vec{B} = \vec{U}^{(L)}\left(\cdots\left(\vec{U}^{(1)}\vec{D}^{(0)}{\vec{V}^{(1)}}^\T + \vec{D}^{(1)}\right)\cdots\right){\vec{V}^{(L)}}^\T + \vec{D}^{(L)}\]
\end{algorithmic}
\end{algorithm}

\begin{restatable}{theorem}{HSSgreedy}\label{theorem:HSSgreedy}
    Let $\bm{B} \in \HSS(L,k)$ be the HSS approximation to $\bm A \in \R^{N \times N}$ output by~\Cref{alg:HSS_greedy_meta}.
    Suppose at each level $\ell = L,\ldots, 1$ of \Cref{alg:HSS_greedy_meta}, for all $i=1, \ldots, 2^\ell$, $\vec{U}^{(\ell)}_i$ and $\vec{V}^{(\ell)}_i$ satisfy
    {\small\begin{align*}
    \EE\Bigl[\|r_i(\bm{A}^{(\ell+1)}) - \bm{U}_i^{(\ell)} (\bm{U}_i^{(\ell)})^\T r_i(\bm{A}^{(\ell+1)})\|_\F^2 \Big| \vec{A}^{(\ell+1)} \Big] 
    &\leq \Gamma_{\textup{r}} \cdot \|r_i(\bm{A}^{(\ell+1)}) - \llbracket r_i(\bm{A}^{(\ell+1)}) \rrbracket_k\|_\F^2,
    \\
    \EE\Bigl[\| c_i(\bm{A}^{(\ell+1)}) - c_i(\bm{A}^{(\ell+1)}) \bm{V}_i^{(\ell)} (\bm{V}_i^{(\ell)})^\T\|_\F^2 \Big| \vec{A}^{(\ell+1)} \Big] 
    &\leq \Gamma_{\textup{c}} \cdot \|c_i(\bm{A}^{(\ell+1)}) - \llbracket c_i(\bm{A}^{(\ell+1)}) \rrbracket_k\|_\F^2.
    \end{align*}}
    and the diagonal factors $\vec{D}^{(\ell)}_i$ satisfy
    {\small
    \begin{align*}
        &\EE\Bigl[\| (\bm{A}_{i,i}^{(\ell+1)} - \bm{D}_i^{(\ell)}) - \bm{U}_i^{(\ell)} (\bm{U}_i^{(\ell)})^\T (\bm{A}_{i,i}^{(\ell+1)} - \bm{D}_{i}^{(\ell)})\bm{V}_i^{(\ell)} (\bm{V}_i^{(\ell)})^\T \|_\F^2   \Big| \vec{A}^{(\ell+1)}, \bm{U}_i^{(\ell)},\bm{V}_i^{(\ell)} \Big]
        \\&\hspace{.8em}\leq \Gamma_{\textup{d}} \cdot \Big[ \|r_i(\bm{A}^{(\ell+1)}) - \bm{U}_i^{(\ell)} (\bm{U}_i^{(\ell)})^\T r_i(\bm{A}^{(\ell+1)})\|_\F^2 
        + \| c_i(\bm{A}^{(\ell+1)}) - c_i(\bm{A}^{(\ell+1)}) \bm{V}_i^{(\ell)} (\bm{V}_i^{(\ell)})^\T\|_\F^2 \Big].
    \end{align*}
    }
    Then,\footnote{We are not aware of any \rev{prior} result guaranteeing the existence of an optimal HSS approximation. \rev{We therefore include a proof that an optimal HSS approximation always exists; see \Cref{appendix:existence}.}
    }
    \begin{equation}\label{eq:opt_bound}
        \mathbb{E}\Bigl[\|\bm{A} - \bm{B}\|_\F^2 \Big] \leq \left(\Gamma_{\textup{r}} +\Gamma_{\textup{c}} \right)(1+\Gamma_{\textup{d}})L\cdot \rev{\min\limits_{\bm{C} \in \HSS(L,k)}}\|\bm{A}-\bm{C}\|_\F^2.
    \end{equation}
\end{restatable}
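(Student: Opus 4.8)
My plan is to set up a recursion on the levels of the telescoping factorization. The key observation is that the error $\|\bm A - \bm B\|_\F^2$ decomposes across levels: at level $\ell$, the algorithm commits an error in approximating the HSS block-rows/block-columns of $\bm A^{(\ell+1)}$ by the ranges of $\bm U_i^{(\ell)}, \bm V_i^{(\ell)}$, plus an error in the diagonal blocks $\bm D_i^{(\ell)}$; and then the deflated matrix $\bm A^{(\ell)} = (\bm U^{(\ell)})^\T(\bm A^{(\ell+1)} - \bm D^{(\ell)})\bm V^{(\ell)}$ is passed down to the next level. Because the $\bm U^{(\ell)}, \bm V^{(\ell)}$ have orthonormal columns, one can write, for a fixed block row/column index, an orthogonal (Pythagorean) split of $\bm A^{(\ell+1)} - \bm U^{(\ell)}\bm A^{(\ell)}(\bm V^{(\ell)})^\T - \bm D^{(\ell)}$ into (i) the part of the off-diagonal blocks not captured by the projections $\bm U_i^{(\ell)}(\bm U_i^{(\ell)})^\T$ and $(\cdot)\bm V_i^{(\ell)}(\bm V_i^{(\ell)})^\T$, and (ii) the residual in the diagonal blocks after projection, which is exactly the quantity controlled by $\Gamma_{\textup d}$. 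Telescoping this identity from $\ell = L$ down to $\ell = 1$ gives
\[
\|\bm A - \bm B\|_\F^2 = \sum_{\ell=1}^{L} \bigl(\text{level-}\ell\text{ off-diagonal + diagonal residual}\bigr),
\]
after which I take expectations and apply the three hypotheses level by level, using the tower property to peel off the conditioning on $\bm A^{(\ell+1)}$ (and, for the diagonal bound, additionally on $\bm U_i^{(\ell)}, \bm V_i^{(\ell)}$).

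The second ingredient is a per-level comparison to the optimum. After applying the hypotheses, the level-$\ell$ contribution is bounded by $(\Gamma_{\textup r} + \Gamma_{\textup c})(1+\Gamma_{\textup d})$ times $\sum_i \bigl(\|r_i(\bm A^{(\ell+1)}) - \llbracket r_i(\bm A^{(\ell+1)})\rrbracket_k\|_\F^2 + \|c_i(\bm A^{(\ell+1)}) - \llbracket c_i(\bm A^{(\ell+1)})\rrbracket_k\|_\F^2\bigr)$. I then need: the sum over $i$ of the best rank-$k$ truncation errors of the HSS block-rows and block-columns of $\bm A^{(\ell+1)}$ is at most the squared distance from $\bm A^{(\ell+1)}$ to the best element of $\HSS(\ell, k)$ (equivalently, to the best SSS$(\ell,k)$ approximation at this level), and moreover that this quantity is in turn at most the squared distance from the original $\bm A$ to the best element of $\HSS(L,k)$. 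The first part is essentially the Eckart–Young theorem applied blockwise: any HSS (or SSS) matrix agrees with $\bm A^{(\ell+1)}$ only up to rank-$k$ off-diagonal blocks, and the block-rows/columns being individually rank-$k$ means $\|r_i - \llbracket r_i\rrbracket_k\|_\F^2 \le \|r_i(\bm A^{(\ell+1)}) - r_i(\bm C)\|_\F^2$ for any competitor $\bm C$, summed up. The second part requires showing that $\inf_{\bm C \in \HSS(\ell,k)}\|\bm A^{(\ell+1)} - \bm C\|_\F^2 \le \inf_{\bm C \in \HSS(L,k)}\|\bm A - \bm C\|_\F^2$: since $\bm A^{(\ell+1)}$ is obtained from $\bm A$ by a sequence of orthogonal compressions $(\bm U^{(m)})^\T(\cdot)\bm V^{(m)}$ and subtractions of block-diagonal matrices, and these operations do not increase the distance to the relevant HSS class (a near-optimal HSS approximation of $\bm A$ gets mapped, under the same operations, to a valid — and no-worse — HSS approximation of $\bm A^{(\ell+1)}$), this monotonicity holds. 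This is where the $L$ factor enters: each of the $L$ levels contributes at most $(\Gamma_{\textup r}+\Gamma_{\textup c})(1+\Gamma_{\textup d})\cdot\inf_{\bm C\in\HSS(L,k)}\|\bm A-\bm C\|_\F^2$, and summing gives \eqref{eq:opt_bound}.

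The main obstacle I anticipate is making the "monotonicity under deflation" step rigorous — i.e., carefully tracking how an (infimizing sequence of) optimal HSS approximation of $\bm A$ transforms under the algorithm's compression/subtraction steps into a legitimate HSS approximation of $\bm A^{(\ell+1)}$ of no larger error, and simultaneously that the off-diagonal truncation errors of $\bm A^{(\ell+1)}$ are dominated by those of $\bm A$ (not just of the intermediate matrix). One must be careful that the chosen $\bm D_i^{(\ell)}$ only affects diagonal blocks and is "invisible" to the off-diagonal block-row/column errors, so that the deflation genuinely passes the off-diagonal structure down cleanly; and that the orthonormality of $\bm U^{(\ell)}, \bm V^{(\ell)}$ is used both for the Pythagorean splitting and for the fact that $\|\bm U^{(\ell)}\bm M(\bm V^{(\ell)})^\T\|_\F = \|\bm M\|_\F$. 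A secondary subtlety is bookkeeping the conditional expectations correctly, since $\bm A^{(\ell+1)}$ is itself a random matrix depending on all the randomness used at levels $L, L-1, \ldots, \ell+1$; the hypotheses are stated conditionally on exactly the right $\sigma$-algebra to make the tower property go through, so I would organize the induction so that each application of a hypothesis is immediately followed by taking a further expectation over $\bm A^{(\ell+1)}$, reducing everything to the deterministic blockwise Eckart–Young bounds above.
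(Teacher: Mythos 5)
Your proposal is correct and takes essentially the same route as the paper's proof: the paper organizes your telescoping/Pythagorean decomposition as an induction on $L$, bounds each level's SSS approximation error by the blockwise Eckart--Young comparison you describe (\Cref{thm:simple_error}), and makes your ``monotonicity under deflation'' step rigorous exactly as you sketch, via the fact that $\bm{R}^\T(\bm{B}-\bm{D})\bm{L}$ is again HSS (\Cref{prop:HSS_preservation}, yielding \Cref{lemma:HSS_optimal}). One bookkeeping caution: charge $\Gamma_{\textup{r}}(1+\Gamma_{\textup{d}})$ to the row-truncation errors and $\Gamma_{\textup{c}}(1+\Gamma_{\textup{d}})$ to the column-truncation errors separately (each sum is at most the per-level optimum squared), rather than multiplying $(\Gamma_{\textup{r}}+\Gamma_{\textup{c}})(1+\Gamma_{\textup{d}})$ by their combined sum as written in your intermediate step, which would lose an extra factor of $2$.
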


As discussed, \Cref{alg:HSS_greedy_meta} can be implemented in many different ways \cite{LevittMartinsson:2024,Martinsson:2008,Martinsson:2011,XiaChandrasekaranGuLi:2010,XiaChandrasekaranGuLi:2010superfast}. We highlight some important examples below:

\vspace{-.3em}
\noindent\textbf{Explicit matrices:}\label{page:explicit}
If $\bm{A}$ is dense and stored in memory, the simplest way to implement \Cref{alg:HSS_greedy_meta} is to use the truncated SVD: \rev{explicitly form $\bm{A}^{(\ell + 1)}$ and} let $\bm{U}_i^{(\ell)}$ and $\bm{V}_i^{(\ell)}$ be the top $k$ left and right singular vectors of $r_i(\bm{A}^{(\ell+1)})$ and $ c_i(\bm{A}^{(\ell+1)})$, respectively, and let $\bm{D}_{i,i}^{(\ell)} = \rev{\bm{A}_{i,i}^{(\ell+1)}}$. This implementation requires $O(N^2k)$ operations.\footnote{\rev{Each iteration of the inner loop in lines~\ref{line:begininner}-\ref{line:endinner} requires computing the SVD of two matrices of sizes $2k \times (2^{\ell+1} k - 2k)$ and  $(2^{\ell+1} k - 2k) \times 2k$. Hence, at level $\ell$ the total cost of the inner loop is $O\left((2^{\ell+1})^2 k^3\right)$. Computing $\bm{A}^{(\ell)}$ in line~\ref{line:Aell} also requires $O\left((2^{\ell+1})^2 k^3\right)$ operations due to the block-diagonal structure of $\bm{U}^{(\ell)}, \bm{V}^{(\ell)},$ and $\bm{D}^{(\ell)}$. Thus, each level $\ell$ requires $O\left((2^{\ell+1})^2 k^3\right)$ operations. Summing over $\ell = L,\ldots,1$ and using $N = 2^{L+1}k$ gives an overall computational cost of \Cref{alg:HSS_greedy_meta} of $O(4^{L+1}k^3) = O(N^2k)$ operations. } }
Since at each level we compute the \emph{optimal} subspaces of each block-row and block-column, in the context of \Cref{theorem:HSSgreedy}, this yields $\Gamma_{\textup{r}} = \Gamma_{\textup{c}} = 1$ and $\Gamma_{\textup{d}} = 0$. Consequently, for this implementation of \Cref{alg:HSS_greedy_meta}, the output satisfies $
    \|\bm{A} - \bm{B}\|_\F^2 \leq 2L\rev{\min\limits_{\bm{C} \in \HSS(L,k)}} \|\bm{A} - \bm{C}\|_\F^2.$

\vspace{-.3em}
\noindent\textbf{Implicit matrices:}
When $\vec{A}$ can only be accessed using matvecs,  we leverage the ``block-nullification'' idea of \cite{LevittMartinsson:2024} to implement \Cref{alg:HSS_greedy_meta}.
This approach is described in \cref{section:matvec}, where we prove in~\Cref{theorem:HSS_matvec} that the output of this algorithm satisfies $
    \mathbb{E}\|\bm{A} - \bm{B}\|_\F^2 \leq O(L)\rev{\min\limits_{\bm{C} \in \HSS(L,k)}} \|\bm{A} - \bm{C}\|_\F^2,$ yielding our main result.

\vspace{-.3em}
\noindent\textbf{Structured matrices in memory:}
If $\vec{A}$ is sparse or itself hierarchical (but perhaps with a higher rank) and stored in memory, this may be leveraged to efficiently implement \Cref{alg:HSS_greedy_meta}; see e.g. \cite{XiaChandrasekaranGuLi:2010,XiaChandrasekaranGuLi:2010superfast} which compress an HSS rank-$2k$ matrix to an HSS rank-$k$ matrix.\footnote{This commonly arises since the sum of two HSS rank-$k$ matrices is an HSS rank-$2k$ matrix.}
The approximation bound will depend on the exact algorithm -- we do not focus on this case in this paper.

Our analysis of \Cref{alg:HSS_greedy_meta} relies on two key facts. First, in~\Cref{section:simpleoptimal} we show that obtaining the SSS approximation at level $\ell$ by optimizing  $\bm{U}_i^{(\ell)}$ and $\bm{V}_i^{(\ell)}$  independently and inexactly does not incur much more error than optimizing them simultaneously and exactly (\Cref{thm:simple_error}).
Second, in~\Cref{sec:HSS_level_optimality}, we show that the error of the best HSS approximation to $\bm{A}^{(\ell+1)}$ is bounded below by the error of the best HSS approximation to $\bm{A}^{(\ell)}$ (\Cref{lemma:HSS_optimal}). Together, these facts ensure that the error arising from the low-rank approximation at each of the $L$ levels is bounded by the overall optimum error, resulting in our $O(L)$ quasi-optimality factor. 
\rev{\begin{remark}
   One can also prove a deterministic version of~\cref{theorem:HSSgreedy}. In that case, the assumptions would be deterministic bounds (rather than in expectation) on the quality of our approximations to the right and left subspaces of $r_i(\bm{A}^{(\ell+1)})$ and $c_i(\bm{A}^{(\ell + 1)})$ at each level $\ell$, and the expectation in~\cref{eq:opt_bound} would be removed. The statement and proof is otherwise identical. However, we emphasize that our main result is the stated probabilistic version for several reasons. First, hierarchical matrix algorithms generally work level-by-level, and~\cref{theorem:HSSgreedy} reflects that  an approximation at one level can be correlated to those at all prior levels. This dependence between levels is one of the key difficulties in analyzing hierarchical matrix techniques. Secondly, we extend this result to the matvec access model. Matvec algorithms for hierarchical matrix approximation must be probabilistic in nature because they rely on sketching for low-rank approximation, which cannot be done deterministically. 
\end{remark}}

\subsection{Approximation of SSS matrices}\label{section:simpleoptimal}

In this section, we argue that the SSS approximation computed at each iteration of \Cref{alg:HSS_greedy_meta} is quasi-optimal. 
We extend the analysis in this section to the more general class of \emph{uniform block low-rank} ($\UBLR$) matrices \cite{AshcraftButtariMary:2021,PearceYesypenkoLevittMartinsson:2025} in \Cref{sec:BLR}.

\begin{theorem}\label{thm:simple_error}
    Consider a matrix $\bm{A} \in \mathbb{R}^{2^{\ell + 1}k \times 2^{\ell+1} k}$ and factors $\bm{U}, \bm{V}, \bm{D}$ as partitioned as in \Cref{def:partitioning}. Suppose that for all $1 \leq i \leq 2^\ell$,  $\bm{U}_i$ and $\bm{V}_i$ satisfy
    \begin{align*}
        \EE\Bigl[ \|r_i(\bm{A}) - \bm{U}_i \bm{U}_i^\T r_i(\bm{A})\|_\F^2 \Big]
        &\leq \Gamma_{\textup{r}} \cdot \|r_i(\bm{A}) - \llbracket r_i(\bm{A}) \rrbracket_k\|_\F^2, 
        \\
        \EE\Bigl[ \| c_j(\bm{A}) - c_j(\bm{A}) \bm{V}_j \bm{V}_j^\T\|_\F^2 \Big]
        &\leq \Gamma_{\textup{c}} \cdot \|c_j(\bm{A}) - \llbracket c_j(\bm{A}) \rrbracket_k\|_\F^2,
    \end{align*}
    and  $\bm{D}_i \in \mathbb{R}^{2k \times 2k}$ satisfies  
    \begin{align}
    \begin{split}
        &
        \EE\Bigl[ \| (\bm{A}_{i,i} - \bm{D}_i) - \bm{U}_i \bm{U}_i^\T (\bm{A}_{i,i} - \bm{D}_{i})\bm{V}_i \bm{V}_i^\T \|_\F^2 \Big| \bm{U}_i,\bm{V}_i \Big]
        \\&\hspace{5em}
        \leq \Gamma_{\textup{d}} \cdot \left(\|r_i(\bm{A}) - \bm{U}_i \bm{U}_i^\T r_i(\bm{A})\|_\F^2 +  \| c_j(\bm{A}) - c_j(\bm{A}) \bm{V}_j \bm{V}_j^\T\|_\F^2 \right).
        \end{split}\label{eq:diagonal}
    \end{align}

    Then, if $\bm{X} = \bm{U}^T(\bm{A} - \bm{D}) \bm{V}$ we have
    \begin{equation}
    \EE\Bigl[ \| \bm{A} - (\bm{U}\bm{X}\bm{V}^\T + \bm{D}) \|_\F^2 \Bigr] \leq (\Gamma_{\textup{r}}+\Gamma_{\textup{c}})(1+\Gamma_{\textup{d}}) \cdot \rev{\min\limits_{\bm{B} \in \SSS(\ell,k)}}\|\bm{A}-\bm{B}\|_\F^2.
    \end{equation}
\end{theorem}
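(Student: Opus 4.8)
The plan is to first rewrite the error $\bm{A}-\bm{B}$ in a form that cleanly separates the contribution of the off-diagonal blocks (which will be controlled by $\Gamma_{\textup{r}},\Gamma_{\textup{c}}$) from the contribution of the diagonal correction (controlled by $\Gamma_{\textup{d}}$), and then to lower-bound the optimal SSS error by the per-block-row and per-block-column truncation errors. Write $\bm{P}_U=\bm{U}\bm{U}^\T=\blockdiag(\bm{U}_i\bm{U}_i^\T)$ and $\bm{P}_V=\bm{V}\bm{V}^\T=\blockdiag(\bm{V}_i\bm{V}_i^\T)$, both orthogonal projectors, and set $\bm{M}=\bm{A}-\bm{D}$. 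Since $\bm{X}=\bm{U}^\T\bm{M}\bm{V}$, we have $\bm{U}\bm{X}\bm{V}^\T=\bm{P}_U\bm{M}\bm{P}_V$, so $\bm{A}-\bm{B}=\bm{M}-\bm{P}_U\bm{M}\bm{P}_V$. Decompose $\bm{M}=\bm{N}+\bm{\Delta}$, where $\bm{N}$ retains only the off-diagonal blocks of $\bm{M}$ (equivalently of $\bm{A}$, since $\bm{D}$ is block diagonal) and $\bm{\Delta}=\blockdiag(\bm{A}_{i,i}-\bm{D}_i)$. Because $\bm{P}_U,\bm{P}_V$ are block diagonal, $\bm{P}_U\bm{N}\bm{P}_V$ has zero diagonal blocks while $\bm{P}_U\bm{\Delta}\bm{P}_V$ is block diagonal, so $\bm{N}-\bm{P}_U\bm{N}\bm{P}_V$ and $\bm{\Delta}-\bm{P}_U\bm{\Delta}\bm{P}_V$ have disjoint block supports and are thus Frobenius-orthogonal, giving
\[
\|\bm{A}-\bm{B}\|_\F^2 = \|\bm{N}-\bm{P}_U\bm{N}\bm{P}_V\|_\F^2 + \sum_{i=1}^{2^\ell}\big\|(\bm{A}_{i,i}-\bm{D}_i)-\bm{U}_i\bm{U}_i^\T(\bm{A}_{i,i}-\bm{D}_i)\bm{V}_i\bm{V}_i^\T\big\|_\F^2 ,
\]
where the last sum is exactly the quantity bounded by $\Gamma_{\textup{d}}$ in \eqref{eq:diagonal}.

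For the first term, applying the Pythagorean identity twice (splitting on $\bm{I}-\bm{P}_U$ from the left, then on $\bm{I}-\bm{P}_V$ from the right) and using that $\bm{P}_U$ is a contraction,
\[
\|\bm{N}-\bm{P}_U\bm{N}\bm{P}_V\|_\F^2 = \|(\bm{I}-\bm{P}_U)\bm{N}\|_\F^2 + \|\bm{P}_U\bm{N}(\bm{I}-\bm{P}_V)\|_\F^2 \le \|(\bm{I}-\bm{P}_U)\bm{N}\|_\F^2 + \|\bm{N}(\bm{I}-\bm{P}_V)\|_\F^2 .
\]
Since the $i$-th block row of $\bm{N}$ is $r_i(\bm{A})$ with a zero block inserted at position $i$ (and symmetrically for block columns), these Frobenius norms split as $\sum_i\|r_i(\bm{A})-\bm{U}_i\bm{U}_i^\T r_i(\bm{A})\|_\F^2$ and $\sum_i\|c_i(\bm{A})-c_i(\bm{A})\bm{V}_i\bm{V}_i^\T\|_\F^2$. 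Combining with the first display, taking total expectation, and using the tower rule — conditioning the diagonal term on $\bm{U}_i,\bm{V}_i$ to invoke \eqref{eq:diagonal}, then applying the $\Gamma_{\textup{r}},\Gamma_{\textup{c}}$ hypotheses — yields
\[
\EE\big[\|\bm{A}-\bm{B}\|_\F^2\big] \le (1+\Gamma_{\textup{d}})\Big(\Gamma_{\textup{r}}\sum_i\|r_i(\bm{A})-\llbracket r_i(\bm{A})\rrbracket_k\|_\F^2 + \Gamma_{\textup{c}}\sum_i\|c_i(\bm{A})-\llbracket c_i(\bm{A})\rrbracket_k\|_\F^2\Big).
\]

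It remains to show both sums are at most $\inf_{\bm{B}\in\SSS(\ell,k)}\|\bm{A}-\bm{B}\|_\F^2$. For any $\bm{B}'=\bm{U}'\bm{X}'(\bm{V}')^\T+\bm{D}'\in\SSS(\ell,k)$, the diagonal block $\bm{D}_i'$ does not contribute to $r_i(\bm{B}')$, which therefore factors through $\bm{U}_i'\in\R^{2k\times k}$, so $\rank r_i(\bm{B}')\le k$; by Eckart–Young, $\|r_i(\bm{A})-r_i(\bm{B}')\|_\F \ge \|r_i(\bm{A})-\llbracket r_i(\bm{A})\rrbracket_k\|_\F$. As the off-diagonal blocks are disjoint across the $r_i$'s, $\|\bm{A}-\bm{B}'\|_\F^2 \ge \sum_i\|r_i(\bm{A})-r_i(\bm{B}')\|_\F^2 \ge \sum_i\|r_i(\bm{A})-\llbracket r_i(\bm{A})\rrbracket_k\|_\F^2$, and symmetrically for columns. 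Taking the infimum over $\bm{B}'$ and substituting into the previous display gives the bound $(\Gamma_{\textup{r}}+\Gamma_{\textup{c}})(1+\Gamma_{\textup{d}})\inf_{\bm{B}\in\SSS(\ell,k)}\|\bm{A}-\bm{B}\|_\F^2$.

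The part requiring the most care is the first paragraph: arranging the orthogonal splitting so the diagonal correction appears in exactly the form assumed in \eqref{eq:diagonal}, and keeping the block bookkeeping straight. Relatedly, the step $\|\bm{P}_U\bm{N}(\bm{I}-\bm{P}_V)\|_\F\le\|\bm{N}(\bm{I}-\bm{P}_V)\|_\F$ is precisely what turns the natural $\max(\Gamma_{\textup{r}},\Gamma_{\textup{c}})$ into the additive $\Gamma_{\textup{r}}+\Gamma_{\textup{c}}$ in the final constant; everything else is routine.
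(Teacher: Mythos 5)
Your proof is correct and follows essentially the same route as the paper: the same split of the error into diagonal blocks (handled by $\Gamma_{\textup{d}}$) and off-diagonal blocks, the same Pythagorean-plus-projector-contraction step that yields the additive $\Gamma_{\textup{r}}+\Gamma_{\textup{c}}$, and the same Eckart--Young argument lower-bounding the SSS optimum by the block-row and block-column truncation errors. The only cosmetic differences are that you phrase the splitting via the global projectors $\bm{U}\bm{U}^\T$, $\bm{V}\bm{V}^\T$ rather than the paper's per-row matrices $\bm{R}_i$, and your comparison against an arbitrary $\bm{B}'\in\SSS(\ell,k)$ with the infimum taken at the end neatly sidesteps the paper's small compactness argument for the existence of optimal factors.
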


\begin{proof}
Let $\widetilde{\bm{U}}, \widetilde{\bm{V}},$ $\widetilde{\bm{D}}$, and $\widetilde{\bm{X}} = \{ \widetilde{\bm{X}}_{i,j}\}_{i,j=1}^{2^\ell}$ be any optimal factors, i.e. satisfying
\begin{equation*}
     \|\bm{A} - (\widetilde{\bm{U}}\widetilde{\bm{X}}\widetilde{\bm{V}}^\T + \widetilde{\bm{D}})\|_{\F}^2 = \rev{\min\limits_{\bm{B} \in \SSS(\ell,k)}}\|\bm{A}-\bm{B}\|_\F^2 =: \text{OPT}^2.
\end{equation*}
Such factors are guaranteed to exist, since the error between $\bm{A}$ and $\widetilde{\bm{U}} \widetilde{\bm{X}} \widetilde{\bm{V}}^\T + \widetilde{\bm{D}}$ is given by 
\begin{align}
\begin{split}
    \|\bm{A} - (\widetilde{\bm{U}} \widetilde{\bm{X}} \widetilde{\bm{V}}^\T + \widetilde{\bm{D}})\|_\F^2 &=  \sum\limits_{i=1}^{2^\ell}\|\bm{A}_{i,i} - \widetilde{\bm{U}}_i\widetilde{\bm{X}}_{i,i} \widetilde{\bm{V}}_i^\T - \widetilde{\bm{D}}_i\|_\F^2 \\
    &\hspace{5em}+ \sum\limits_{i,j = 1, i\neq j}^{2^\ell} \|\bm{A}_{i,j} - \widetilde{\bm{U}}_i\widetilde{\bm{X}}_{i,j} \widetilde{\bm{V}}_j^\T\|_\F^2.
    \end{split}\label{rem:simple_diagonal}
\end{align}
For $\widetilde{\bm{U}}, \widetilde{\bm{V}},$ $\widetilde{\bm{D}}$, and $\widetilde{\bm{X}}$ to be optimal, we must have $\bm{A}_{i,i} = \widetilde{\bm{U}}_i\widetilde{\bm{X}}_{i,i} \widetilde{\bm{V}}_i^\T + \widetilde{\bm{D}}_i$, since we can choose $\bm{D}_i = \bm{A}_{i,i}$ and $\widetilde{\bm{X}}_{i,i} = \bm{0}$ so the first error term vanishes, while leaving the second error term unchanged. 
$\widetilde{\bm{X}}$ can then be optimally chosen as $\widetilde{\bm{U}}^\T(\bm{A} - \widetilde{\bm{D}}) \widetilde{\bm{V}}$, and the blocks of  $\widetilde{\bm{U}}$ and $\widetilde{\bm{V}}$ can be obtained by optimizing over a Cartesian product of Stiefel manifolds, which is a compact set. 

Now, as in \cref{rem:simple_diagonal}, the error between $\bm{A}$ and $\bm{U} \bm{X} \bm{V}^\T + \bm{D}$ is given by 
\begin{align}
    \|\bm{A} - (\bm{U} \bm{X} \bm{V}^\T + \bm{D}) \|_\F^2 
    &= \sum\limits_{i = 1}^{2^\ell} \| (\bm{A}_{i,i} - \bm{D}_{i}) - \bm{U}_i \bm{X}_{i,i} \bm{V}_i^\T \|_\F^2 
    \nonumber\\&\hspace{5em}+ \sum\limits_{i,j = 1, i \neq j}^{2^\ell} \|\bm{A}_{i,j} - \bm{U}_i \bm{U}_i^{\T} \bm{A}_{i,j} \bm{V}_j \bm{V}_j^\T \|_\F^2.
        \label{eqn:simple_errdef}
\end{align}
Since $\bm{U}_i \bm{X}_{i,i} \bm{V}_i^\T = \bm{U}_i \bm{U}_i^\T (\bm{A}_{i,i} - \bm{D}_{i})\bm{V}_i \bm{V}_i^\T$, we bound the expectation of the first term in the sum by assumption:     \begin{align}
        &
        \EE\Bigl[\| (\bm{A}_{i,i} - \bm{D}_i) - \bm{U}_i \bm{X}_{i,i} \bm{V}_i^\T \|_\F^2 \Big| \bm{U}_i, \bm{V}_i \Bigr]
        \nonumber\\&\hspace{5em}\leq \Gamma_{\textup{d}} \cdot \left(\|r_i(\bm{A}) - \bm{U}_i \bm{U}_i^\T r_i(\bm{A})\|_\F^2 +  \| c_j(\bm{A}) - c_j(\bm{A}) \bm{V}_j \bm{V}_j^\T\|_\F^2 \right).
        \label{eqn:diag_Gamma_d}
    \end{align}
    
Now, we define
\begin{equation*}
    \bm{R}_i = \begin{bmatrix} \bm{A}_{i,1} \bm{V}_1 \bm{V}_1^\T & \cdots & \bm{A}_{i,i-1} \bm{V}_{i-1} \bm{V}_{i-1}^\T & \bm{A}_{i,i+1} \bm{V}_{i+1} \bm{V}_{i+1}^\T & \cdots & \bm{A}_{i,2^{\ell}} \bm{V}_{2^{\ell}} \bm{V}_{2^{\ell}}^\T \end{bmatrix}.
\end{equation*}
Then, for each $i$, we can rewrite the second term in~\Cref{eqn:simple_errdef} as
\begin{align*}
    \sum_{j=1,j\neq i}^{2^\ell}
    \|\bm{A}_{i,j}^{(\ell)} - \bm{U}_i \bm{U}_i^\T \bm{A}_{i,j}^{(\ell)} \bm{V}_j \bm{V}_j^\T \|_\F^2
    &= \|r_i(\bm{A}) - \bm{U}_i \bm{U}_i^\T \bm{R}_i\|_\F^2
    \\= \|r_i(\bm{A})& - \bm{U}_i \bm{U}_i^\T r_i(\bm{A})\|_\F^2 + \|\bm{U}_i \bm{U}_i^\T r_i(\bm{A}) - \bm{U}_i \bm{U}_i^\T \bm{R}_i\|_\F^2,
    \numberthis\label{eqn:simple_jsum}
\end{align*}
where the second equality is due to the Pythagorean theorem. 

Inserting \Cref{eqn:simple_jsum,eqn:diag_Gamma_d} into \Cref{eqn:simple_errdef} and using the law of total expectation, we find
\begin{align}\label{eq:r+c}
&\EE\Bigl[ \|\bm{A} - (\bm{U} \bm{X} \bm{V}^\T + \bm{D}) \|_\F^2 \Bigr] 
\leq R+C, \text{ where}\\
    R&= \sum\limits_{i=1}^{2^{\ell}} (1+\Gamma_{\textup{d}}) \cdot \EE\Bigl[ \|r_i(\bm{A}) - \bm{U}_i \bm{U}_i^\T r_i(\bm{A})\|_\F^2 \Big],\nonumber
    \\C&=\sum\limits_{i=1}^{2^{\ell}}  \EE\Bigl[ \Gamma_{\textup{d}} \cdot\| c_j(\bm{A}) - c_j(\bm{A}) \bm{V}_j \bm{V}_j^\T\|_\F^2 
    + \|\bm{U}_i \bm{U}_i^\T r_i(\bm{A}) - \bm{U}_i \bm{U}_i^\T \bm{R}_i\|_\F^2 \Big].\nonumber
\end{align}

We begin by bounding $R$. Define 
$
\widetilde{\bm{X}}_{i,:} = \begin{bmatrix}
    \widetilde{\bm{X}}_{i,1}
    &    \widetilde{\bm{X}}_{i,2}
    &\cdots &     \widetilde{\bm{X}}_{i,2^L}
\end{bmatrix}$
Using our assumption on $\bm{U}_i$ and that $\llbracket r_i(\bm{A}) \rrbracket_k$ is the best rank-$k$ approximation to $r_i(\bm{A})$, we have $
\EE\Bigl[ \|r_i(\bm{A}) - \bm{U}_i \bm{U}_i^\T r_i(\bm{A})\|_\F^2 \Big]
\leq \Gamma_{\textup{r}} \cdot \|r_i(\bm{A}) - \llbracket r_i(\bm{A}) \rrbracket_k \|_\F^2$.
Then, since $\widetilde{\bm{U}}_{i} \widetilde{\bm{X}}_{i,:} \widetilde{\bm{V}}^\T$ is some rank-$k$ approximation to $r_i(\bm{A})$, and recalling that $\widetilde{\bm{X}}_{i,i} = \bm{0}$, 
\begin{align*}
    R &\leq \Gamma_{\textup{r}} (1+\Gamma_{\textup{d}}) \cdot \rev{\sum_{i=1}^{2^{\ell}} \|r_i(\bm{A}) - \llbracket r_i(\bm{A}) \rrbracket_k \|_\F^2}
    \\&\leq 
     \Gamma_{\textup{r}} (1+\Gamma_{\textup{d}})\cdot \sum\limits_{i=1}^{2^{\ell}}\|r_i(\bm{A}) - \widetilde{\bm{U}}_{i} \widetilde{\bm{X}}_{i,:} \widetilde{\bm{V}}^\T\|_\F^2 
    =  \Gamma_{\textup{r}} (1+\Gamma_{\textup{d}})\cdot  \text{OPT}^2.
\end{align*}

Next, we bound $C$. 
Using that $\bm{U}_i\bm{U}_i^\T$ is an orthogonal  projector, we have that  
\begin{align*}
    \sum\limits_{i=1}^{2^{\ell}} \|\bm{U}_i \bm{U}_i^\T r_i(\bm{A}) - \bm{U}_i \bm{U}_i^\T \bm{R}_i\|_\F^2 
    &\leq \sum\limits_{i=1}^{2^{\ell}} \| r_i(\bm{A}) - \bm{R}_i\|_\F^2\\
    & = \sum\limits_{i,j=1 , i \neq j}^{2^{\ell}} \| \bm{A}_{i,j} - \bm{A}_{i,j} \bm{V}_j \bm{V}_j^\T\|_\F^2\\
    & = \sum\limits_{j=1}^{2^{\ell}} \| c_j(\bm{A}) - c_j(\bm{A}) \bm{V}_j \bm{V}_j^\T\|_\F^2.
\end{align*}
Then, by a similar argument as above, $C \leq  \Gamma_{\textup{c}} (1+\Gamma_{\textup{d}}) \cdot \text{OPT}^2$.
Hence, combining \Cref{eq:r+c} and our bounds for $R$ and $C$, we obtain
\begin{align*}
    \mathbb{E}\left[\|\bm{A} - (\bm{U} \bm{X} \bm{V}^\T + \bm{D}) \|_\F^2\right] &\leq (\Gamma_{\textup{c}} + \Gamma_{\textup{r}}) (1+\Gamma_{\textup{d}}) \cdot  \text{OPT}^2.
\end{align*}
\end{proof}



\subsection{Bounding the HSS optimum at different levels}\label{sec:HSS_level_optimality}

In this section, we prove the second key stepping stone towards  \Cref{theorem:HSSgreedy}. We show that the optimal HSS approximation error of $\bm{A}$ is always larger than that of $\bm{U}^\T(\bm{A} - \bm{D}) \bm{V}$, where $\bm{U}$ and $\bm{V}$ are orthonormal block--diagonal bases, and $\bm{D}$ is an arbitrary block-diagonal matrix of the appropriate size. I.e., in the telescoping factorization, the optimal HSS approximation error for $\vec A^{(\ell)}$ is at most that of $\vec A^{(\ell+1)}$. This result will be used to argue that, as long as the approximation is quasi-optimal at each level of \Cref{alg:HSS_greedy_meta} (which it is by \Cref{thm:simple_error}), then the error of the overall approximation is at most a $O(L)$ factor times the  optimum, yielding \Cref{theorem:HSSgreedy}.
\begin{lemma}\label{lemma:HSS_optimal}
    Consider a matrix $\bm{A} \in \mathbb{R}^{2^{\ell + 1}k \times 2^{\ell+1} k}$ and factors $\bm{U}, \bm{V}, \bm{D}$ as partitioned as in \Cref{def:partitioning}. 
    Then,
    \begin{equation*}
        \rev{\min\limits_{\bm{C} \in \HSS(\ell-1,k)}} \|\bm{U}^\T (\bm{A} - \bm{D}) \bm{V} - \bm{C}\|_\F \leq \rev{\min\limits_{\bm{B} \in \HSS(\ell,k)}} \|\bm{A} - \bm{B}\|_\F.
    \end{equation*}
\end{lemma}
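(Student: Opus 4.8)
The plan is to exhibit, for each fixed $\bm{B}\in\HSS(\ell,k)$, an explicit $\bm{C}\in\HSS(\ell-1,k)$ with $\|\bm{U}^\T(\bm{A}-\bm{D})\bm{V}-\bm{C}\|_\F\le\|\bm{A}-\bm{B}\|_\F$; since any such $\bm{C}$ is a feasible point for the left-hand infimum, taking the infimum over $\bm{B}$ afterwards gives the lemma. The right candidate is $\bm{C}:=\bm{U}^\T(\bm{B}-\bm{D})\bm{V}$. With this choice, $\bm{U}^\T(\bm{A}-\bm{D})\bm{V}-\bm{C}=\bm{U}^\T(\bm{A}-\bm{B})\bm{V}$, and because the columns of $\bm{U}$ and $\bm{V}$ are orthonormal we have $\|\bm{U}^\T(\bm{A}-\bm{B})\bm{V}\|_\F=\|\bm{U}\bm{U}^\T(\bm{A}-\bm{B})\bm{V}\bm{V}^\T\|_\F\le\|\bm{A}-\bm{B}\|_\F$, the final inequality because $\bm{U}\bm{U}^\T$ and $\bm{V}\bm{V}^\T$ are orthogonal projectors and hence contractions in Frobenius norm. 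So the entire content of the lemma reduces to the structural claim that $\bm{C}=\bm{U}^\T(\bm{B}-\bm{D})\bm{V}$ actually belongs to $\HSS(\ell-1,k)$.

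To see this I would unfold one level of the telescoping factorization of $\bm{B}$: write $\bm{B}=\bm{W}\bm{G}\bm{Z}^\T+\bm{D}_{\bm{B}}$ with $\bm{G}\in\HSS(\ell-1,k)$, where $\bm{W},\bm{Z}$ are block-diagonal with $2k\times k$ orthonormal-column blocks and $\bm{D}_{\bm{B}}$ is block-diagonal with $2k\times 2k$ blocks. Since $\bm{D}$ has the same block-diagonal shape as $\bm{D}_{\bm{B}}$, subtracting it only changes the block-diagonal part, so $\bm{C}=\bm{M}\bm{G}\bm{N}^\T+\bm{E}$ where $\bm{M}:=\bm{U}^\T\bm{W}$, $\bm{N}:=\bm{V}^\T\bm{Z}$ and $\bm{E}:=\bm{U}^\T(\bm{D}_{\bm{B}}-\bm{D})\bm{V}$ are all block-diagonal with $k\times k$ blocks. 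Thus it suffices to prove the closure property: \emph{if $\bm{G}\in\HSS(m-1,k)$ and $\bm{M},\bm{N},\bm{E}$ are block-diagonal with $k\times k$ blocks conformable with the leaf partition of $\bm{G}$ (i.e.\ $2^{m}$ such blocks), then $\bm{M}\bm{G}\bm{N}^\T+\bm{E}\in\HSS(m-1,k)$.}

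I would prove this closure property by induction on $m$. The base case $m=1$ is immediate, since $\HSS(0,k)=\R^{2k\times 2k}$. For the inductive step, unfold $\bm{G}=\bm{W}_{\bm{G}}\bm{G}'\bm{Z}_{\bm{G}}^\T+\bm{D}_{\bm{G}}$ with $\bm{G}'\in\HSS(m-2,k)$. Grouping the $k\times k$ blocks of $\bm{M}$ in consecutive pairs makes $\bm{M}\bm{W}_{\bm{G}}$ block-diagonal with $2k\times k$ blocks, which are generally not orthonormal; I would restore orthonormality with a block-by-block thin QR factorization $\bm{M}\bm{W}_{\bm{G}}=\bm{Q}_W\bm{R}_W$, where $\bm{Q}_W$ is block-diagonal with $2k\times k$ orthonormal-column blocks and $\bm{R}_W$ is block-diagonal with $k\times k$ blocks, and likewise $\bm{N}\bm{Z}_{\bm{G}}=\bm{Q}_Z\bm{R}_Z$. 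Then $\bm{M}\bm{G}\bm{N}^\T+\bm{E}=\bm{Q}_W(\bm{R}_W\bm{G}'\bm{R}_Z^\T)\bm{Q}_Z^\T+(\bm{M}\bm{D}_{\bm{G}}\bm{N}^\T+\bm{E})$; by the inductive hypothesis the core $\bm{R}_W\bm{G}'\bm{R}_Z^\T$ lies in $\HSS(m-2,k)$, the trailing term is block-diagonal with $2k\times 2k$ blocks, and $\bm{Q}_W,\bm{Q}_Z$ have exactly the shape required by \Cref{def:HSS}, so $\bm{M}\bm{G}\bm{N}^\T+\bm{E}\in\HSS(m-1,k)$. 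A minor technical point to record is that the QR step is valid even for rank-deficient blocks, which is fine because every $2k\times k$ matrix admits a factorization $\bm{Q}\bm{R}$ with $\bm{Q}\in\R^{2k\times k}$ having orthonormal columns.

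The main obstacle is this closure property. It is intuitively clear that left and right multiplication of an HSS matrix by conformable block-diagonal matrices preserves HSS structure, but making it rigorous forces the block-by-block re-orthonormalization together with the induction, and one has to track carefully that re-orthonormalization pushes only $k\times k$ block-diagonal factors down to the next level, so that the inductive hypothesis applies verbatim. Everything else is bookkeeping with block sizes and the contraction property of orthogonal projectors.
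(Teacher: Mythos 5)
Your proposal is correct and follows essentially the same route as the paper: the same candidate $\bm{C}=\bm{U}^\T(\bm{B}-\bm{D})\bm{V}$, the same Frobenius-norm contraction via orthonormal columns, and the same structural fact that conjugation by block-diagonal factors (minus a block-diagonal term) maps $\HSS(\ell,k)$ into $\HSS(\ell-1,k)$, proved by unfolding one level of the telescoping factorization and inducting with blockwise QR re-orthonormalization. The only differences are cosmetic: you take the infimum over $\bm{B}$ directly rather than via an $\varepsilon$-near-minimizer, and you merge the paper's \Cref{lemma:HSS_preservation_step1} and \Cref{prop:HSS_preservation} into a single closure lemma stated for $k\times k$ block-diagonal factors.
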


 To prove~\Cref{lemma:HSS_optimal}, we  first show that certain affine transformations of HSS matrices are also HSS. A similar result is found in \cite[Lemma 4.6]{kressnermasseirobol:2019} and, for completeness, we include the proof in \Cref{section:appendixA}.

\begin{proposition}\label{prop:HSS_preservation}
    Consider a matrix $\bm{B} \in \HSS(\ell,k)$ and any $\bm{R}, \bm{L} \in \mathbb{R}^{2^{\ell + 1}k \times 2^\ell k}$ and $\bm{D} \in \mathbb{R}^{2^{\ell + 1}k \times 2^{\ell + 1}k}$ so that
    \begin{align*}
        \bm{R} &= \blockdiag(\bm{R}_1,\ldots,\bm{R}_{2^{\ell}}), \quad \bm{R}_i \in \mathbb{R}^{2k \times k}, \\
        \bm{L} &= \blockdiag(\bm{L}_1,\ldots,\bm{L}_{2^{\ell}}), \quad \bm{L}_i \in \mathbb{R}^{2k \times k},\\
        \bm{D} &= \blockdiag(\bm{D}_1,\ldots,\bm{D}_{2^{\ell}}), \quad \bm{D}_i \in \mathbb{R}^{2k \times 2k}.
    \end{align*}
    Then, $\bm{R}^\T (\bm{B} - \bm{D}) \bm{L} \in \HSS(\ell-1,k)$.
\end{proposition}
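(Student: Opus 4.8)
The plan is to isolate the only place where HSS structure could be destroyed — conjugating an HSS matrix by arbitrary block--diagonal matrices — into a standalone auxiliary lemma, and then deduce \Cref{prop:HSS_preservation} by peeling one level off the telescoping factorization of $\bm{B}$.

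\emph{Auxiliary lemma.} If $\bm{G}\in\HSS(m,k)$ and $\bm{P},\bm{S}$ are block--diagonal with $2^{m+1}$ blocks of size $k\times k$, then $\bm{P}\bm{G}\bm{S}^\T\in\HSS(m,k)$. I would prove this by induction on $m$. The base case $m=0$ is immediate, since $\HSS(0,k)=\mathbb{R}^{2k\times 2k}$ and $\bm{P},\bm{S}\in\mathbb{R}^{2k\times 2k}$. For the inductive step, write $\bm{G}=\bm{U}^{(m)}\bm{G}'(\bm{V}^{(m)})^\T+\bm{D}^{(m)}$ with $\bm{G}'\in\HSS(m-1,k)$ as in \Cref{def:HSS}. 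The $i$-th $2k\times k$ block of $\bm{U}^{(m)}$ occupies exactly the rows of the two $k\times k$ diagonal blocks $\bm{P}_{2i-1},\bm{P}_{2i}$ of $\bm{P}$, so $\bm{P}\bm{U}^{(m)}$ is block--diagonal with $2^m$ blocks $\blockdiag(\bm{P}_{2i-1},\bm{P}_{2i})\bm{U}^{(m)}_i\in\mathbb{R}^{2k\times k}$. Taking an economy QR factorization of each such block (which always yields an orthonormal--column factor, even when the block is rank deficient) gives $\bm{P}\bm{U}^{(m)}=\bm{U}''\bm{G}^{\bm U}$, where $\bm{U}''$ is block--diagonal with $2^m$ orthonormal--column blocks in $\mathbb{R}^{2k\times k}$ and $\bm{G}^{\bm U}$ is block--diagonal with $2^m=2^{(m-1)+1}$ blocks of size $k\times k$; do the same for $\bm{S}\bm{V}^{(m)}=\bm{V}''\bm{G}^{\bm V}$. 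Then
\[
\bm{P}\bm{G}\bm{S}^\T=\bm{U}''\bigl(\bm{G}^{\bm U}\bm{G}'(\bm{G}^{\bm V})^\T\bigr)(\bm{V}'')^\T+\bm{P}\bm{D}^{(m)}\bm{S}^\T,
\]
where $\bm{P}\bm{D}^{(m)}\bm{S}^\T$ is block--diagonal with $2^m$ blocks of size $2k\times 2k$ and $\bm{G}^{\bm U}\bm{G}'(\bm{G}^{\bm V})^\T\in\HSS(m-1,k)$ by the inductive hypothesis. This is precisely a depth-$m$ telescoping factorization, so $\bm{P}\bm{G}\bm{S}^\T\in\HSS(m,k)$.

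\emph{Deducing the proposition.} Using \Cref{def:HSS}, write $\bm{B}=\bm{U}^{(\ell)}\bm{B}^{(\ell)}(\bm{V}^{(\ell)})^\T+\bm{D}^{(\ell)}$ with $\bm{B}^{(\ell)}\in\HSS(\ell-1,k)$, so that
\[
\bm{R}^\T(\bm{B}-\bm{D})\bm{L}=(\bm{R}^\T\bm{U}^{(\ell)})\,\bm{B}^{(\ell)}\,\bigl((\bm{V}^{(\ell)})^\T\bm{L}\bigr)+\bm{R}^\T(\bm{D}^{(\ell)}-\bm{D})\bm{L}.
\]
Since $\bm{R},\bm{L},\bm{U}^{(\ell)},\bm{V}^{(\ell)}$ share the same $2^\ell$-block partition into blocks of size $2k\times k$, the products $\bm{R}^\T\bm{U}^{(\ell)}$ and $\bm{L}^\T\bm{V}^{(\ell)}$ are block--diagonal with $2^\ell=2^{(\ell-1)+1}$ blocks of size $k\times k$, so the first term lies in $\HSS(\ell-1,k)$ by the auxiliary lemma (with $m=\ell-1$ and $\bm{G}=\bm{B}^{(\ell)}$). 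The second term $\bm{R}^\T(\bm{D}^{(\ell)}-\bm{D})\bm{L}$ is block--diagonal with $2^\ell$ blocks of size $k\times k$, equivalently $2^{\ell-1}$ blocks of size $2k\times 2k$; adding it only updates the outermost diagonal factor of the telescoping factorization, so the sum remains in $\HSS(\ell-1,k)$, which is the claim.

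I expect the main obstacle to be the inductive step of the auxiliary lemma: the naive recursion produces the products $\bm{P}\bm{U}^{(m)}$ and $\bm{S}\bm{V}^{(m)}$ whose blocks are no longer column orthonormal, forcing the QR re-orthonormalization and then the observation that the leftover $k\times k$ factors $\bm{G}^{\bm U},\bm{G}^{\bm V}$ have exactly the block count ($2^{(m-1)+1}$) needed to invoke the inductive hypothesis one level deeper. Everything else is careful bookkeeping of block sizes — in particular that a block--diagonal matrix with $2^{j+1}$ blocks of size $k$ is the same as one with $2^{j}$ blocks of size $2k$, and that these match the factor dimensions in \Cref{def:HSS} at each level.
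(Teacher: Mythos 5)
Your proposal is correct and follows essentially the same route as the paper's own proof: an auxiliary lemma, proved by induction, showing that conjugation by block--diagonal matrices preserves the HSS level after re-orthonormalizing the products $\bm{P}\bm{U}^{(m)}$, $\bm{S}\bm{V}^{(m)}$ via economy QR, followed by peeling one level of the telescoping factorization of $\bm{B}$ and noting that $\bm{R}^\T\bm{U}^{(\ell)}$, $\bm{L}^\T\bm{V}^{(\ell)}$, and $\bm{R}^\T(\bm{D}^{(\ell)}-\bm{D})\bm{L}$ have the required block--diagonal structure. The only differences are cosmetic: you state the lemma at $k\times k$ block granularity without the additive block--diagonal term (absorbing it into the outermost diagonal factor at the end, exactly as the paper's lemma does internally) and use the trivial base case $m=0$ rather than the paper's $\ell=1$ HODLR check.
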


Now, we are ready to apply this result to prove~\Cref{lemma:HSS_optimal}.

\begin{proof}[Proof of \Cref{lemma:HSS_optimal}]
    \rev{Let $\widetilde{\bm{B}} \in \HSS(\ell,k)$ be an optimal HSS approximation to $\bm{A}$ so that $\|\bm{A} - \widetilde{\bm{B}}\|_\F^2 = \min\limits_{\bm{B} \in \HSS(\ell,k)}\|\bm{A} - \bm{B}\|_\F$.}\footnote{\rev{Recall that the result in \Cref{appendix:existence} guarantees the existence of an optimal approximation.}} 
    Define $\widetilde{\bm{E}} = \bm{A}-\widetilde{\bm{B}}$. Then,
    \begin{equation*}
        \bm{U}^\T (\bm{A}- \bm{D})\bm{V} = \bm{U}^\T (\bm{A} - \widetilde{\bm{B}} + \widetilde{\bm{B}} - \bm D) \bm V =  \bm{U}^\T(\widetilde{\bm{B}} - \bm{D})\bm{V} + \bm{U}^\T \widetilde{\bm{E}} \bm{V}. 
    \end{equation*}
    Note that $\bm{U}^\T(\widetilde{\bm{B}} - \bm{D})\bm{V} \in \HSS(\ell-1,k)$ by \Cref{prop:HSS_preservation}. Hence, 
    \begin{align*}
         \rev{\min\limits_{\bm{C} \in \HSS(\ell-1,k)}} \|\bm{U}^\T (\bm{A} - \bm{D}) \bm{V} - \bm{C}\|_\F 
         &\leq \|\bm{U}^\T (\bm{A}- \bm{D})\bm{V} - \bm{U}^\T(\widetilde{\bm{B}} - \bm{D})\bm{V}\|_\F 
         \\&=  \|\bm{U}^\T \widetilde{\bm{E}} \bm{V}\|_\F
         \\&\leq \|\widetilde{\bm{E}}\|_\F = \rev{\min\limits_{\bm{B} \in \HSS(\ell,k)} \|\bm{A} - \bm{B}\|_\F,}
    \end{align*}
    \rev{which yields the desired result.}
\end{proof}


\subsection{Analysis of \texorpdfstring{\Cref{alg:HSS_greedy_meta}}{Algorithm 3.1}}
With the results of \Cref{section:simpleoptimal,sec:HSS_level_optimality} in hand, we are now ready to prove \Cref{theorem:HSSgreedy}. 


\begin{proof}[Proof of \Cref{theorem:HSSgreedy}]
    Throughout the proof, we use the notation of \Cref{alg:HSS_greedy_meta}. We proceed by induction on $L$. If $L = 1$, an HSS matrix is an SSS matrix, i.e. $\HSS(1,k) = \SSS(1,k)$. Hence, for $L = 1$, the result follows from \Cref{thm:simple_error}.
    
    Now assume, for the sake of induction, that the theorem holds for any  approximation in  $\HSS(L-1, k)$ that is outputted by~\Cref{alg:HSS_greedy_meta}. That is, for any such approximation, the bound~\cref{eq:opt_bound} holds with multiplicative factor $(\Gamma_r + \Gamma_c)(1 + \Gamma_d) (L-1)$.
    Write $\bm{B}$ in its telescoping factorization \Cref{eq:telescoping_factorization} and $\bm{X} = (\bm U^{(L)})^\T ( \bm A - \rev{\bm{D}^{(L)}}) \bm V^{(L)}$. Then,
    \begin{align}
        \|\bm{A} - \bm{B}\|_\F^2
        \nonumber&= \|\bm{A} - (\bm{U}^{(L)} \bm{B}^{(L)} (\bm{V}^{(L)})^\T + \bm{D}^{(L)})\|_\F^2\\
        \nonumber&= \| \bm A - \bm D^{(L)} - \bm U^{(L)} \bm{X} (\bm V^{(L)})^\T \|_\F^2 \\
        \nonumber&\hspace{8em}+ \| \bm U^{(L)} \bm{X} (\bm V^{(L)})^\T - \bm U^{(L)}  {\bm B} ^{(L)} (\bm V^{(L)})^\T \|_\F^2 \\
        &= \|\bm{A}-\bm{D}^{(L)} - \bm{U}^{(L)} \bm{X} (\bm{V}^{(L)})^\T\|_\F^2 + \|\bm{X} - \bm{B}^{(L)}\|_\F^2.\label{eq:eq1}
    \end{align}
    The second equality follows from the the Pythagorean theorem, and the third equality is due to the orthogonal invariance of the Frobenius norm.
    
    Using \Cref{thm:simple_error}
    and the fact that $\HSS(L,k)\subseteq\SSS(L,k)$,
    \begin{align}
        \|\bm{A}-\bm{D}^{(L)} - \bm{U}^{(L)} \bm{X} (\bm{V}^{(L)})^\T\|_\F^2 
        \nonumber&\leq (1+\Gamma_{\textup{d}})(\Gamma_{\textup{c}} + \Gamma_{\textup{r}}) \rev{\min\limits_{\bm{C} \in \SSS(L,k)}}\|\bm{A} - \bm{C}\|_\F^2 
        \\&\leq (1+\Gamma_{\textup{d}})(\Gamma_{\textup{c}} + \Gamma_{\textup{r}}) \rev{\min\limits_{\bm{C} \in \HSS(L,k)}}\|\bm{A}-\bm{C}\|_\F^2.\label{eq:eq2}
    \end{align}
    
    Note that $\bm{B}^{(L)} \in \HSS(L-1,k)$ is the output of \Cref{alg:HSS_greedy_meta} when we run it on \rev{$\bm{X}$}. 
    Therefore, by our inductive assumption and \Cref{lemma:HSS_optimal} we have
    \begin{align}
        \|\bm{X} - \bm{B}^{(L)}\|_\F^2 
        &\leq (1+\Gamma_{\textup{d}})(\Gamma_{\textup{c}} + \Gamma_{\textup{r}})(L-1)\rev{\min\limits_{\bm{C} \in \HSS(L-1,k)}} \|\bm{X} - \bm{C}\|_\F^2\nonumber 
        \\&\leq (1+\Gamma_{\textup{d}})(\Gamma_{\textup{c}} + \Gamma_{\textup{r}})(L-1) \rev{\min\limits_{\bm{C} \in \HSS(L,k)}} \|\bm{A} - \bm{C}\|_\F^2.\label{eq:eq3}
    \end{align}
    Combining \Cref{eq:eq1,eq:eq2,eq:eq3} yields the desired result. 
\end{proof}
\begin{remark}
    \rev{The proof of \Cref{theorem:HSSgreedy} can be modified to obtain an absolute error bound, similar to those found in the literature \cite{kressnermasseirobol:2019}. Suppose that we have explicit access to the entries of the matrix $\bm{A}$ so that we can set $\Gamma_{\textup{d}} = 0$. Suppose further that we can compute orthonormal bases $\bm{U}_i^{(\ell)}$ and $\bm{V}_i^{(\ell)}$ so that
    \begin{align*}
        \|r_i(\bm{A}^{(\ell + 1)}) - \bm{U}_i^{(\ell)} \bm{U}_i^{(\ell)\T} r_i(\bm{A}^{(\ell + 1)})\|_\F &\leq \varepsilon,\\
        \|c_i(\bm{A}^{(\ell + 1)}) -  c_i(\bm{A}^{(\ell + 1)})\bm{V}_i^{(\ell)} \bm{V}_i^{(\ell)\T}\|_\F &\leq \varepsilon,
    \end{align*}
    for each $i$ and $\ell$. By the arguments of \Cref{thm:simple_error}, we have
    \begin{align*}
        \|\bm{A}^{(\ell+1)} - \bm{U}^{(\ell)}\bm{A}^{(\ell)}\left(\bm{V}^{(\ell)}\right)^\T - \bm{D}^{(\ell)}\|_\F^2 \leq& \sum\limits_{i=1}^{2^{\ell}} \left[\|r_i(\bm{A}^{(\ell + 1)}) - \bm{U}_i^{(\ell)} \bm{U}_i^{(\ell)\T} r_i(\bm{A}^{(\ell + 1)})\|_\F^2\right.\\
        &\hspace{-3em}+ \left.\|c_i(\bm{A}^{(\ell + 1)}) -  c_i(\bm{A}^{(\ell + 1)})\bm{V}_i^{(\ell)} \bm{V}_i^{(\ell)\T}\|_\F^2\right]\\
        \leq & 2\sum\limits_{i=1}^{2^{\ell}} \varepsilon^2
        = 2^{\ell+1} \varepsilon^2.
    \end{align*}
   Combining this with the  proof of \Cref{theorem:HSSgreedy},  the output of \Cref{alg:HSS_greedy_meta} satisfies
    \begin{equation*}
        \|\bm{A} - \bm{B}\|_\F \leq \bigg( 2^{L+1} + \sum\limits_{\ell=1}^{L-1} 2^{\ell + 1} \bigg)^{1/2}\cdot\varepsilon + \bigg( \sum\limits_{\ell=1}^L 2^{\ell + 1} \bigg)^{1/2}\cdot\varepsilon \leq 2^{(L+2)/2} \cdot \varepsilon = \sqrt{2N/k} \cdot \varepsilon,
    \end{equation*}
    since $N = 2^{L+1}k$. In particular, if we truncate all the singular values of each HSS-block row and HSS-block columns less than $\epsilon$, we have $\varepsilon \leq \sqrt{2k} \epsilon$ and we therefore have  $\|\bm{A} - \bm{B}\|_\F \leq 2 \sqrt{N} \epsilon$. This provides a similar bound to \cite[Theorem 4.7]{kressnermasseirobol:2019}, which gives a $O(\sqrt{N} \epsilon)$ bound in the spectral norm.}
\end{remark}

\subsection{Lower bound for \texorpdfstring{\Cref{alg:HSS_greedy_meta}}{Algorithm 3.1}}\label{section:lowerbound}

\Cref{theorem:HSSgreedy} guarantees that \Cref{alg:HSS_greedy_meta} produces an HSS approximation with error at most $O(\log(N/k))$ times the best possible error. 
One might hope to obtain an approximation with error that is arbitrarily close to optimal.
Unfortunately, this is not possible for \Cref{alg:HSS_greedy_meta}.

\begin{theorem}\label{theorem:lower_bound}
For any $\varepsilon > 0$, there exists a matrix $\bm{A}$ so that the output $\bm{B}$ of \Cref{alg:HSS_greedy_meta} for explicit matrices as outlined in \Cref{section:nearoptimal} satisfies
    \begin{equation*}
        \|\bm{A} - \bm{B}\|_\F^2  \geq (2-\varepsilon) \rev{\min\limits_{\bm{C} \in \HSS(L,k)}}\|\bm{A}-\bm{C}\|_\F^2.
    \end{equation*}
\end{theorem}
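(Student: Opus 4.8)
The plan is to exhibit, for each $\varepsilon>0$, a small explicit matrix on which the truncated‑SVD version of \Cref{alg:HSS_greedy_meta} is forced to ``pay twice'' for a single off‑diagonal block --- once to respect its block‑row subspace and once to respect its block‑column subspace --- whereas some genuine HSS matrix can align the two constraints and pay only once. It suffices to take $L=2$ and $k=1$ (the case of general $k$ follows verbatim after replacing each scalar below by that scalar times $\bm{I}_k$, and larger $L$ or general $N$ by padding with zero blocks). So let $\bm{A}\in\R^{8\times 8}$, viewed as a $4\times4$ array of $2\times2$ blocks $\bm{A}_{i,j}$ as in \Cref{def:partitioning}. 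Fix a small $\eta\in(0,1)$ and set $\bm{A}_{1,2}=\diag(1,1-\eta)$ and $\bm{A}_{3,2}=\diag(0,1)=\bm{e}_2\bm{e}_2^\T$, with every other block of $\bm{A}$ (including all diagonal blocks) equal to zero.

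First I would trace \Cref{alg:HSS_greedy_meta}. At the top level $\ell=L=2$: HSS block‑row $1$ equals $[\bm{A}_{1,2}\ \bm{0}\ \bm{0}]$, whose nonzero singular values $1>1-\eta$ are distinct, so the greedy choice is $\bm{U}_1^{(2)}=\bm{e}_1$; HSS block‑row $3$ equals $[\bm{0}\ \bm{e}_2\bm{e}_2^\T\ \bm{0}]$, forcing $\bm{U}_3^{(2)}=\bm{e}_2$; HSS block‑column $2$ has the two orthogonal columns $(1,0,0,0,0,0)^\T$ and $(0,1-\eta,0,1,0,0)^\T$, of norms $1$ and $\sqrt{(1-\eta)^2+1}>1$, forcing $\bm{V}_2^{(2)}=\bm{e}_2$; all remaining level‑$2$ factors multiply only zero blocks and are irrelevant, and $\bm{D}^{(2)}=\bm{0}$. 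By the error decomposition used in the proof of \Cref{theorem:HSSgreedy}, the level‑$2$ contribution to $\|\bm{A}-\bm{B}\|_\F^2$ is $\sum_{i\neq j}\|\bm{A}_{i,j}-\bm{U}_i^{(2)}(\bm{U}_i^{(2)})^\T\bm{A}_{i,j}\bm{V}_j^{(2)}(\bm{V}_j^{(2)})^\T\|_\F^2$: the $(3,2)$ term vanishes, and since $\bm{e}_1^\T\bm{A}_{1,2}\bm{e}_2=0$ the $(1,2)$ term equals $\|\bm{A}_{1,2}\|_\F^2=1+(1-\eta)^2$. Moreover the compressed matrix $\bm{A}^{(2)}=(\bm{U}^{(2)})^\T(\bm{A}-\bm{D}^{(2)})\bm{V}^{(2)}$ has a single nonzero entry (namely $\bm{e}_2^\T\bm{A}_{3,2}\bm{e}_2=1$, sitting in the $(2,1)$ block of the $4\times4$ partition), so $\bm{A}^{(2)}\in\HSS(1,1)$ and is recovered exactly by the remaining recursion, contributing no further error. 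Hence $\|\bm{A}-\bm{B}\|_\F^2=1+(1-\eta)^2$.

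Next I would upper bound the optimum by a single competitor. Let $\bm{U}=\blockdiag(\bm{e}_2,\bm{e}_1,\bm{e}_2,\bm{e}_1)$ and $\bm{V}=\blockdiag(\bm{e}_1,\bm{e}_2,\bm{e}_1,\bm{e}_1)$ --- the only factors that matter are $\bm{U}_1=\bm{U}_3=\bm{V}_2=\bm{e}_2$, the rest multiplying zero blocks --- and set $\bm{C}=\bm{U}\bm{U}^\T\bm{A}\bm{V}\bm{V}^\T$. Then $\bm{C}_{3,2}=\bm{A}_{3,2}$, $\bm{C}_{1,2}=\diag(0,1-\eta)$, and every other block of $\bm{C}$ is zero; moreover $\bm{U}^\T\bm{A}\bm{V}$ has only two nonzero entries, lying in a diagonal block and in a rank‑$1$ off‑diagonal block of the $\HSS(1,1)$ partition, so $\bm{C}\in\HSS(2,1)$. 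Therefore $\inf_{\bm{C}\in\HSS(2,1)}\|\bm{A}-\bm{C}\|_\F^2\leq\|\diag(1,0)\|_\F^2=1$ for this choice of $\bm{C}$. Combining the two bounds gives $\|\bm{A}-\bm{B}\|_\F^2\geq\bigl(1+(1-\eta)^2\bigr)\inf_{\bm{C}\in\HSS(2,1)}\|\bm{A}-\bm{C}\|_\F^2$, and choosing $\eta$ so that $(1-\eta)^2\geq 1-\varepsilon$ yields the claimed factor $2-\varepsilon$.

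The work here is bookkeeping rather than hard estimation, but two points deserve care. First, the greedy SVD selections above must be \emph{genuinely forced}, which is why $\bm{A}_{1,2}$ is taken not to be a multiple of the identity (so that $\bm{U}_1^{(2)}=\bm{e}_1$ is the unique best rank‑$1$ block‑row subspace) and why the auxiliary block $\bm{A}_{3,2}$ tilts HSS block‑column $2$ enough that $\bm{V}_2^{(2)}=\bm{e}_2$ is forced. Second, one must verify that the compressed matrix $\bm{A}^{(2)}$ is itself in $\HSS(1,1)$, so that no error is introduced at lower levels and the entire factor‑of‑two loss is localized to the block $\bm{A}_{1,2}$: the greedy picks the block‑row subspace $\bm{e}_1$ adapted to that block but is then globally forced to the block‑column subspace $\bm{e}_2$, and because $\bm{A}_{1,2}$ is diagonal the two projections annihilate the block entirely (error $\|\bm{A}_{1,2}\|_\F^2$), whereas an HSS approximation may instead use the \emph{aligned} pair $(\bm{e}_2,\bm{e}_2)$, keep the $(1-\eta)$‑component, and lose only $\|\diag(1,0)\|_\F^2=1$. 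Extending the construction to arbitrary $k$ and $N$ is routine.
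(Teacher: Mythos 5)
Your construction is correct, and it proves the statement by a genuinely different route than the paper. The paper's proof (see \Cref{eq:hard_instance}) builds a \emph{family} of $N\times N$ matrices with identity blocks everywhere off the block anti-diagonal and $\delta$-perturbed exchange matrices on it; the greedy top-level choices $\bm{U}_i^{(L)}=\bm{e}_1$, $\bm{V}_j^{(L)}=\bm{e}_2$ then annihilate the $\Theta(2^{2L})$ identity blocks, while the competitor $\tfrac12\bm{1}_{N\times N}$ pays only about half that mass, and the factor $2-\varepsilon$ is reached only as $L\to\infty$ (suboptimality spread over many blocks). You instead localize the entire loss in a single $8\times 8$ instance at fixed $L=2$, $k=1$: the block $\bm{A}_{1,2}=\diag(1,1-\eta)$ forces the row basis to $\bm{e}_1$ while the auxiliary block $\bm{A}_{3,2}=\bm{e}_2\bm{e}_2^\T$ forces the column basis to $\bm{e}_2$, so the greedy annihilates $\bm{A}_{1,2}$ and pays $1+(1-\eta)^2$, whereas the aligned competitor $\bm{C}=\bm{U}\bm{U}^\T\bm{A}\bm{V}\bm{V}^\T$ (which you correctly verify lies in $\HSS(2,1)$, since $\bm{U}^\T\bm{A}\bm{V}$ has its two nonzero entries in a diagonal block and a rank-one off-diagonal block) pays $1$; letting $\eta\to 0$ gives $2-\varepsilon$. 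I checked the key steps: the top singular subspaces at level $2$ are uniquely forced (distinct singular values $1>1-\eta$ and $\sqrt{1+(1-\eta)^2}>1$), the compressed matrix $\bm{A}^{(2)}$ has a single nonzero entry sitting in an off-diagonal rank-one position and is recovered exactly by the lower levels regardless of the arbitrary bases chosen for the zero block-rows/columns, and the competitor's error is exactly $\|\diag(1,0)\|_\F^2=1$. Since \Cref{theorem:lower_bound} only asserts existence of one matrix, your fixed-size example suffices; your closing claim that general $k$ (scalars replaced by multiples of $\bm{I}_k$) and larger $N$ (zero padding) follow is plausible but only sketched — if you want the hard instance for every $(L,k)$, as the paper implicitly provides and as its numerical experiments use, you would need to verify that padding does not disturb the forced subspaces and the exact lower-level recovery (it does not, since appending zero blocks changes no singular subspaces). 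A trade-off worth noting: your instance shows the factor-$2$ barrier is already present at the SSS/single-level stage in dimension $8$ with a one-parameter perturbation, which is a cleaner minimal example, while the paper's family additionally demonstrates the phenomenon on matrices where the suboptimal mass is distributed globally, which is what makes it a useful stress test in \Cref{sec:experiments}.
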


The proof, which is given in \Cref{sec:lower_bound_proof}, is fully constructive and in fact provides a \rev{lower bound} for the greedy SSS approximation algorithm used at each iteration of \cref{alg:HSS_greedy_meta}.
In general, we are unaware of any polynomial-time algorithm which can compute a $(1+\varepsilon)$ factor HSS or SSS approximation. In fact, it is even an open question whether  an any $O(1)$ approximation factor is possible for HSS matrices.

\rev{We note that our lower bound in~\cref{theorem:lower_bound} does not depend on $L$, while the error in our upper bound scales as $O(L)$. Addressing this gap and finding the correct dependence on $L$ (if there is any at all) is another interesting open problem.}

\section{A matrix-vector query algorithm}\label{section:matvec}

In \Cref{section:nearoptimal}, we analyzed the generic \Cref{alg:HSS_greedy_meta} for computing quasi-optimal HSS approximations.
We now shift focus to the case when  $\bm{A}$ can only be accessed through matvecs. In particular, we must now build near-optimal subspaces for the block-rows and block-columns at each iteration of \Cref{alg:HSS_greedy_meta} using only matrix-vector products  $\bm{x} \mapsto \bm{A} \bm{x}$ and $\bm{y} \mapsto \bm{A}^\T \bm{y}$.
We describe how to do this in~\Cref{alg:HSS_greedy_matvec}, which is a variant of the algorithm in \cite{LevittMartinsson:2024}. 

A key difference between our algorithm and  that of~\cite{LevittMartinsson:2024} is that we use new random sketches at each level, rather than reuse sketches from the first level. This increases the query complexity of our algorithm to $O(k\log(N/k))$ matrix-vector products, compared to $O(k)$ matrix-vector products in \cite{LevittMartinsson:2024}, but enables us to derive a near-optimality guarantee. Formally we show:
\begin{theorem}\label{theorem:HSS_matvec}
    If $s \geq 3k + 2$, then using $O(sL)$ matvecs and $O(NksL + Ns^2)$ additional runtime, \Cref{alg:HSS_greedy_matvec} returns an approximation $\bm{B} \in \HSS(L,k)$ to $\bm{A}$ so that \Cref{theorem:HSSgreedy} holds with
    \begin{align*}
        \Gamma_{\textup{r}} = \Gamma_{\textup{c}} &= \left(1 + \frac{2e(s-2k)}{\sqrt{(s-3k)^2-1}}\right)^2 = O\left(\left(\frac{s-2k}{s-3k}\right)^2\right), \quad 
        \Gamma_{\textup{d}} &= \frac{2k}{s-2k -1} .
    \end{align*}
    Hence, if we choose $s = 5k$, for example, $\Gamma_{\textup{r}}$, $\Gamma_{\textup{c}}$, and $\Gamma_{\textup{d}}$ are all fixed constants, so we find that, with $O(kL)$ matrix-vector products with $\bm{A}$ and $\bm{A}^\T$ \Cref{alg:HSS_greedy_matvec} returns an approximation to $\bm{A}$ so that
    \begin{equation*}
        \mathbb{E}\|\bm{A} - \bm{B}\|_\F^2 \leq \Gamma \cdot \rev{\min\limits_{\bm{C} \in \HSS(L,k)}}\|\bm{A}-\bm{C}\|_\F^2, \quad \text{where} \quad \Gamma = O(L).
    \end{equation*}
\end{theorem}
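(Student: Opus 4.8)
The plan is to combine the generic greedy analysis of \Cref{theorem:HSSgreedy} with a careful implementation of the block-row and block-column low-rank approximation steps via block-nullification and projection-cost-preserving sketches (PCPSs). The key observation is that \Cref{theorem:HSSgreedy} already reduces everything to verifying the three hypotheses on $\vec{U}_i^{(\ell)}$, $\vec{V}_i^{(\ell)}$, and $\vec{D}_i^{(\ell)}$ with the claimed constants $\Gamma_\textup{r}, \Gamma_\textup{c}, \Gamma_\textup{d}$; once those are in place, \cref{eq:opt_bound} immediately gives $\EE\|\vec A - \vec B\|_\F^2 \le (\Gamma_\textup{r}+\Gamma_\textup{c})(1+\Gamma_\textup{d})L \cdot \inf_{\vec C}\|\vec A - \vec C\|_\F^2$, and for $s = 5k$ all three constants are $O(1)$, yielding $\Gamma = O(L)$. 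So the real content is threefold: (i) show that with $O(s)$ matvecs per level we can, via block-nullification, simulate a single sketch $c_i(\vec A^{(\ell+1)})\vec\Omega$ of each block-column (and $r_i(\vec A^{(\ell+1)})^\T\vec\Psi$ of each block-row) simultaneously across all $i$ at a fixed level; (ii) show that a near-optimal rank-$k$ subspace can be extracted from such one-sided sketches — this is where PCPSs enter, since the randomized SVD would require adaptive transpose products with the individual blocks, which block-nullification does not provide; and (iii) carefully track why sampling \emph{fresh} sketches at each level is needed: the blocks $\vec A^{(\ell+1)}$ at level $\ell$ depend on the sketches used at levels $L, L-1, \ldots, \ell+1$, so reusing sketches would destroy the conditional independence that the expectation bounds in \Cref{theorem:HSSgreedy} require (note the conditioning on $\vec A^{(\ell+1)}$ in those hypotheses).

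For the subspace quality, I would invoke the new explicit PCPS error bounds the authors promise in the introduction: for a matrix $\vec M$ with $2k$ rows (here $\vec M = r_i(\vec A^{(\ell+1)})^\T$ or $\vec M = c_i(\vec A^{(\ell+1)})$) and a Gaussian sketch $\vec\Omega$ of width $s \ge 3k+2$, the span of $\vec M\vec\Omega$ contains a rank-$k$ subspace achieving, in expectation, error at most $\bigl(1 + \tfrac{2e(s-2k)}{\sqrt{(s-3k)^2-1}}\bigr)^2$ times $\|\vec M - \llbracket\vec M\rrbracket_k\|_\F^2$; this is exactly the stated $\Gamma_\textup{r} = \Gamma_\textup{c}$. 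The diagonal-block constant $\Gamma_\textup{d} = \tfrac{2k}{s-2k-1}$ should come from the standard Gaussian pseudoinverse moment bound $\EE\|\vec G^\dagger\|_\F^2 = \tfrac{p}{q-p-1}$ for a $q\times p$ Gaussian $\vec G$ with $q > p+1$, applied to the residual left over after projecting the $2k\times 2k$ diagonal block onto the already-chosen column and row spaces — the "leftover" random directions have dimension roughly $s - 2k$ on each side, giving the $\tfrac{2k}{s-2k-1}$ factor. I would present these as lemmas (citing the PCPS section and a Gaussian-inverse moment lemma) and then just plug into \Cref{theorem:HSSgreedy}.

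For the complexity count: each level requires $O(s)$ matvecs (a constant number of sketches of width $s$ for the row side and column side, plus the block-nullification solves), and there are $L$ levels, for $O(sL)$ matvecs total. The additional runtime is dominated by (a) forming and orthogonalizing the $O(N/k \times k)$ per-block sketched matrices — $O(Nks)$ per block-row/column times $O(1)$ per level gives $O(NksL)$ overall, wait, more carefully: summing block sizes across a level is $O(Nk)$ entries times $s$ columns, i.e.\ $O(Nks)$ arithmetic per level, $O(NksL)$ total — and (b) the block-nullification linear solves, which involve pseudoinverting $s\times s$-ish Gaussian blocks, costing $O(Ns^2)$ once the sketches are assembled; together $O(NksL + Ns^2)$.

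\textbf{Main obstacle.} The hard part is not the arithmetic but establishing the PCPS guarantee in the precise conditional-expectation form required by \Cref{theorem:HSSgreedy}, together with the independence bookkeeping. Specifically, one must argue that conditioned on $\vec A^{(\ell+1)}$ (which is a deterministic function of the fresh sketches drawn at levels $>\ell$), the sketch drawn at level $\ell$ is still a fresh Gaussian independent of $\vec A^{(\ell+1)}$, so the PCPS bound applies conditionally; and then that the diagonal step, conditioned additionally on $\vec U_i^{(\ell)}, \vec V_i^{(\ell)}$, still sees enough independent Gaussian randomness to invoke the inverse-moment bound. Getting the PCPS constant sharp enough to be dimension-free (rather than picking up a spurious factor of $k$ or $\log$) is the delicate quantitative step, and is presumably why the authors isolate "explicit error bounds for PCPSs" as a result of independent interest. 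A secondary subtlety is handling the recursion: $\vec B^{(L)}$ produced in the matvec algorithm is itself the output of the algorithm run (implicitly) on $\vec A^{(L)} = (\vec U^{(L)})^\T(\vec A - \vec D^{(L)})\vec V^{(L)}$, and one must check that matvecs with $\vec A^{(L)}$ are available from matvecs with $\vec A$ — which they are, since $\vec U^{(L)}, \vec V^{(L)}, \vec D^{(L)}$ are already computed and block-diagonal — so the induction in the proof of \Cref{theorem:HSSgreedy} goes through with the level-$\ell$ constants being the same at every level.
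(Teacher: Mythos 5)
Your proposal is correct and follows essentially the same route as the paper: block nullification yields Gaussian sketches of the HSS block-rows/columns (of effective width $s-2k$), the PCPS bound (\Cref{thm:range_finder} with $q=s-2k$) gives $\Gamma_{\textup{r}}=\Gamma_{\textup{c}}$, the Gaussian pseudoinverse moment bound applied to the fresh sketches used for the diagonal blocks gives $\Gamma_{\textup{d}}=\tfrac{2k}{s-2k-1}$, and everything is plugged into \Cref{theorem:HSSgreedy}, with the same matvec and runtime accounting. The only slight imprecision is attributing the $s-2k$ in the constant to a width-$s$ PCPS bound rather than to the reduced sketch width produced by nullification, but this is bookkeeping, not a gap.
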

\rev{The} remainder of this section is structured as follows. 
In~\Cref{section:blocknullification}, we review the technique of block nullification, which allows us to efficiently simulate matvecs with the block-rows and block-columns of $\vec A$ using matrix-vector products with the full matrix.
In~\Cref{section:pcps}, we establish explicit error bounds (\Cref{thm:range_finder}) for the PCPS-based low-rank approximation technique that is used in \cref{alg:HSS_greedy_matvec} to obtain subspaces for the HSS block-rows and block-columns.
We then analyze an approach for approximating the diagonal blocks at each level in~\Cref{section:diagonal}. 
Finally, in~\Cref{sec:our_alg}, we prove our main result, \Cref{theorem:HSS_matvec}, by showing how the error bound for these primitives allows us to instantiate \cref{theorem:HSSgreedy}.

\subsection{Block nullification}\label{section:blocknullification}

There are many well-known algorithms for computing a low-rank approximation of a matrix from matrix-vector products; see e.g. \cite{HalkoMartinssonTropp:2011,TroppWebber:2023,Nakatsukasa:2020,TroppYurtzeverUdellCevher:2017}. In our setting, we want to compute matrix-vector products with the block-rows $r_i(\bm{A})$ and block-columns $c_i(\bm{A})$, though  we only have access to matrix-vector products with $\bm{A}$, not $r_i(\bm{A})$ and $c_i(\bm{A})$. 
A key idea in \cite{LevittMartinsson:2024} is \emph{block nullification}, a technique that implicitly performs matrix-vector products $\bm{x} \mapsto r_i(\bm{A}) \bm{x}$ and $\bm{y} \mapsto c_i(\bm{A})^\T \bm{y}$ using matrix vector products with $\bm{A}$, where $\bm{x}$ and $\bm{y}$ are standard Gaussian random vectors. To do so, the method must `nullify' the contribution of the diagonal blocks that are excluded from $r_i(\bm{A})$ and $c_i(\bm{A})$.
This section is devoted to describing this idea. 

Consider $\bm{A}^{(\ell + 1)}$ in \Cref{alg:HSS_greedy_meta} and $\bm{\Omega}^{(\ell)}\sim\operatorname{Gaussian}(2^{\ell + 1}k,s)$, where $s > 3k$, partitioned as 
\begin{equation}\label{eq:omega_partition}
\bm{\Omega}^{(\ell )}
= \begin{bmatrix}
    \bm{\Omega}_1^{(\ell)} \\ \bm{\Omega}_2^{(\ell)} \\ \vdots \\ \bm{\Omega}_{2^{\ell}}^{(\ell)}
\end{bmatrix}
,\quad
\bm{\Omega}_i^{(\ell )} \in \mathbb{R}^{2k\times s}
\end{equation}
We have access to matrix-vector products with $\bm{A}^{(\ell + 1)}$ through matrix-vector products with $\bm{A}$ by recursion: 
\begin{equation}\label{eq:matvec_recursion}
    \bm{A}^{(\ell + 1)} \bm{\Omega}^{(\ell)} = (\bm{U}^{(\ell  +1)})^\T(\bm{A}^{(\ell + 2)} \widehat{\bm{\Omega}}^{(\ell)} - \bm{D}^{(\ell+2)} \widehat{\bm{\Omega}}^{(\ell)}) \text{ where } \widehat{\bm{\Omega}}^{(\ell)}:= \bm{V}^{(\ell + 1)} \bm{\Omega}^{(\ell)},
\end{equation}
where matrix-vector products with $\bm{A}^{(\ell+2)}$ are, once again, computed by recursion. We continue until we reach $\bm{A}^{(L+1)} = \bm{A}$, where we directly compute matvecs with $\bm{A}$. Thus, each matvec with $\bm{A}^{(\ell+1)}$ requires one matvec with $\bm{A}$. 

Since $\bm{\Omega}_i^{(\ell)}\in\mathbb{R}^{2k\times s}$, it has a nullspace of dimension at least $s-2k > k$. Therefore, we can obtain a matrix $\bm{P}_i^{(\ell)}$ whose columns form an orthonormal basis for the nullspace of $\bm{\Omega}_i^{(\ell)}$. 
Then, $\bm{\Omega}_i^{(\ell)} \bm{P}_{i}^{(\ell )} = \bm{0}$. Moreover,  for all $j\neq i$,   the unitary invariance of Gaussian matrices together with the mutual independence of $\bm{\Omega}_i$ and $\bm{\Omega}_j$ imply that $\bm{\Omega}_j^{(\ell)} \bm{P}_{i}^{(\ell)}$ are independent Gaussian matrices.
Therefore, if we define 
\begin{equation}\label{eq:Ypartition}
\bm{G}_i^{(\ell)} = \begin{bmatrix}
    \bm{\Omega}_1^{(\ell)}\bm{P}_i^{(\ell)} \\ \vdots \\ \bm{\Omega}_{i-1}^{(\ell)}\bm{P}_i^{(\ell)} \\ \bm{\Omega}_{i+1}^{(\ell)}\bm{P}_i^{(\ell)} \\ \vdots \\ \bm{\Omega}_{2^\ell}^{(\ell)}\bm{P}_i^{(\ell)}\end{bmatrix}, \quad \bm{Y}^{(\ell)} = \bm{A}^{(\ell + 1)} \bm{\Omega}^{(\ell)} = \begin{bmatrix} \bm{Y}_1^{(\ell)}\\ \bm{Y}_2^{(\ell)} \\ \vdots\\ \bm{Y}_{2^{\ell}}^{(\ell)} \end{bmatrix},
\end{equation}
then $\bm{G}_i^{(\ell)}\sim\operatorname{Gaussian}(2^{\ell + 1}k-2k,s-2k)$ and
\begin{equation}\label{eqn:block_nullification_HSS_row}
    \bm{Y}_i^{(\ell)}\bm{P}_i^{(\ell)}
= r_i(\bm{A}) \bm{G}_i^{(\ell)}.
\end{equation}
Thus, we can implicitly compute Gaussian sketches of $r_i(\bm{A}^{(\ell + 1)})$ from the sketch $\bm{A}^{(\ell + 1)}\bm{\Omega}^{(\ell)}$. By an identical argument, we can implicitly compute Gaussian sketches of $c_i(\bm{A}^{(\ell + 1)})^\T$ from the sketch $(\bm{A}^{(\ell + 1)})^\T\bm{\Psi}^{(\ell)}$, where $\bm{\Psi}\sim\operatorname{Gaussian}(2^{\ell + 1}k,s)$. We use the same recursion in~\Cref{eq:matvec_recursion} to implicitly query $(\bm A^{(\ell)})^\T$ at all levels $1 \leq \ell \leq L$.

\subsection{Low-rank approximation via projection-cost-preserving sketches}\label{section:pcps}

At each iteration of \Cref{alg:HSS_greedy_meta}, we must  approximate the top $k$-dimensional singular subspaces of each HSS-block row and column, i.e., compute orthonormal bases $\bm{U}_i^{(\ell)},\bm{V}_i^{(\ell)} \in \mathbb{R}^{2k \times k}$ so that $\bm{U}_i^{(\ell)}\bm{U}_i^{(\ell)\T}r_i(\bm{A}^{(\ell + 1)})$ and $c_i(\bm{A}^{(\ell + 1)})\bm{V}_i^{(\ell)}\bm{V}_i^{(\ell)\T}$ are near-optimal low-rank approximations to $r_i(\bm{A}^{(\ell + 1)})$ and $c_i(\bm{A}^{(\ell + 1)})$, respectively. In this section, we describe a matvec method to do this using the idea of projection-cost-preserving sketches (PCPS) \cite{CohenElderMusco:2015, musco2020projection}.

In the previous section, block nullification allowed us to sketch the matrices  $r_i(\bm{A}^{(\ell + 1)})$ and $c_i(\bm{A}^{(\ell + 1)})^\T$ via matrix-vector products with $\bm{A}$ and $\bm{A}^\T$. However, given access to matrix-vector products with $\bm{A}$ and $\bm{A}^\T$, we cannot use this technique to query  the transposes $r_i(\bm{A}^{(\ell + 1)})^\T$ and $c_i(\bm{A}^{(\ell + 1)})$. Most randomized low-rank approximation algorithms rely on sketching these transposed matrices. For example, to apply the randomized SVD \cite{HalkoMartinssonTropp:2011} to $r_i(\bm{A}^{(\ell + 1)})$, we first obtain an orthonormal basis $\bm{Q}$ for $\text{range}(r_i(\bm{A}^{(\ell + 1)}) \bm{G}_i^{(\ell)})$, where $\bm{G}_i^{(\ell)}$ has $s > k$ columns. We  then obtain an orthonormal basis $\bm{Q}_k$ with $k$ columns satisfying $\text{range}(\bm{Q}_k) \subseteq \text{range}(\bm{Q})$ by computing the first $k$ left singular vectors of $\bm{Q}\left[\bm{Q}^\T r_i(\bm{A}^{(\ell + 1)})\right]$. If $s = O(k/\varepsilon)$ one can show that $\mathbb{E}\|r_i(\bm{A}^{(\ell + 1)}) - \bm{Q}_k \bm{Q}_k^\T r_i(\bm{A}^{(\ell + 1)}) \|_\F^2 \leq (1+\varepsilon)\|r_i(\bm{A}^{(\ell + 1)}) - \llbracket r_i(\bm{A}^{(\ell + 1)})\rrbracket_k \|_\F^2$ \cite{Sarlos:2006}. The issue with this approach is that it requires access to the matrix-vector products $r_i(\bm{A}^{(\ell + 1)})^\T \bm{Q}$. It is not clear how to efficiently compute such a sketch for all $i$ using a small number of matvecs with $\vec A$. 

Thus, we instead use the projection-cost-preserving sketch (PCPS) idea of \cite{CohenElderMusco:2015, musco2020projection}, which shows that the top singular vector subspace of $r_i(\bm{A}^{(\ell + 1)}) \bm{G}_i^{(\ell)}$ is itself a near-optimal subspace for $r_i(\bm{A}^{(\ell + 1)})$ (and analogously for $c_i(\bm{A}^{(\ell + 1)}$). I.e., we can compute the near-optimal bases required by \Cref{alg:HSS_greedy_meta} using only queries to $r_i(\bm{A}^{(\ell + 1)})$ and $c_i(\bm{A}^{(\ell + 1)})^\T$, obtained using block nullification.
We next state an explicit error bound for this PCPS-based approach. This bound may be of independent interest to the numerical linear algebra community, since prior bounds do not give explicit constants. 

\begin{theorem}\label{thm:range_finder}
    Let $\bm{B}\in \mathbb{R}^{m\times p}$ and suppose $\bm{\Omega}\sim\operatorname{Gaussian}(p,q)$. 
    Let $\bm{U}$ be the top $k$ left singular vectors of $\bm{B}\bm{\Omega}$; i.e. so that $\| \bm{B} \bm{\Omega} - \bm{U}\bm{U}^\T \bm{B}\bm{\Omega} \|_\F = \| \bm{B} \bm{\Omega} - \llbracket \bm{B}\bm{\Omega} \rrbracket_k \|_\F$.
    Then, if $q \geq k+2$,
    \[
    \EE\Bigl[ \| \bm{B} - \bm{U}\bm{U}^\T \bm{B} \|_\F^2 \Big]
    \leq \left(1 + \frac{2eq}{\sqrt{(q-k)^2-1}}\right)^2 \| \bm{B} - \llbracket \bm{B} \rrbracket_k \|_\F^2.
    \]
\end{theorem}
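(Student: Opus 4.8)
The goal is to bound $\EE\|\bm{B} - \bm{U}\bm{U}^\T\bm{B}\|_\F^2$ where $\bm{U}$ spans the top-$k$ left singular subspace of the sketch $\bm{B}\bm{\Omega}$. The plan is to relate the error of this PCPS-type estimator to the error of the \emph{standard} randomized-SVD range finder and then invoke (a sharpened, constant-explicit version of) the classical Halko--Martinsson--Tropp-type bound. First I would set $\bm{Q}$ to be an orthonormal basis for $\mathrm{range}(\bm{B}\bm{\Omega})$ (which has $q$ columns) and note the trivial inequality $\|\bm{B} - \bm{U}\bm{U}^\T\bm{B}\|_\F \le \|\bm{B}-\bm{Q}\bm{Q}^\T\bm{B}\|_\F + \|\bm{Q}\bm{Q}^\T\bm{B} - \bm{U}\bm{U}^\T\bm{B}\|_\F$, but this is lossy; instead the cleaner route is to use the defining optimality of $\bm{U}$ among rank-$k$ projections of $\bm{B}\bm{\Omega}$ together with the fact that $\mathrm{range}(\bm{U})\subseteq\mathrm{range}(\bm{Q})$, and then compare $\bm{U}\bm{U}^\T\bm{B}$ against a cleverly chosen rank-$k$ approximation living inside $\mathrm{range}(\bm{B}\bm{\Omega})$.

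Concretely, I would proceed as follows. Write the SVD $\bm{B} = \bm{U}_1\bm\Sigma_1\bm{V}_1^\T + \bm{U}_2\bm\Sigma_2\bm{V}_2^\T$ splitting off the top $k$ components, and set $\bm\Omega_1 = \bm{V}_1^\T\bm\Omega$, $\bm\Omega_2 = \bm{V}_2^\T\bm\Omega$, which are independent Gaussian blocks; condition on $\bm\Omega_1$ having full row rank $k$ (this holds almost surely since $q\ge k+2$). The matrix $\bm{Z} := \bm{U}_1\bm\Sigma_1 + \bm{U}_2\bm\Sigma_2\bm{V}_2^\T\bm\Omega(\bm\Omega_1)^\dagger$ has rank $\le k$, its column space lies in $\mathrm{range}(\bm{B}\bm\Omega)$, and a short computation shows $\|\bm{B}-\bm{Z}\cdot(\text{appropriate right factor})\|$ is controlled. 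The key structural fact I want is that the best rank-$k$ approximation to $\bm{B}$ \emph{with column space constrained to} $\mathrm{range}(\bm{B}\bm\Omega)$ — call its error $\mathcal{E}$ — satisfies $\|\bm{B}-\bm{U}\bm{U}^\T\bm{B}\|_\F \le \mathcal{E}$, because $\bm{U}\bm{U}^\T\bm{B}$ is exactly that constrained best rank-$k$ approximation: projecting $\bm{B}$ onto the top-$k$ left singular subspace of $\bm{B}\bm\Omega$ is equivalent to finding the best rank-$k$ matrix whose range sits in $\mathrm{range}(\bm{B}\bm\Omega)$ (this is the PCPS/"column-space-constrained SVD" lemma from \cite{CohenElderMusco:2015, musco2020projection}). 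Then $\mathcal{E}^2 \le \|\bm{B} - \bm{U}_1\bm{U}_1^\T\bm{B}\bm\Omega(\bm{B}\bm\Omega)^\dagger \bm{B}\|$-type bound; more usefully, $\mathcal{E}^2 \le \|\bm{\Sigma}_2\|_\F^2 + \|\bm\Sigma_2\bm{V}_2^\T\bm\Omega(\bm\Omega_1)^\dagger\|_\F^2$ by plugging in the specific rank-$k$ competitor $\bm{Z}$.

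The remaining work is the probabilistic estimate $\EE\|\bm\Sigma_2\bm{V}_2^\T\bm\Omega(\bm\Omega_1)^\dagger\|_\F^2$. Here $\bm\Omega_2 = \bm{V}_2^\T\bm\Omega$ is independent of $\bm\Omega_1$, so conditioning on $\bm\Omega_1$ gives $\EE\|\bm\Sigma_2\bm\Omega_2(\bm\Omega_1)^\dagger\|_\F^2 = \|\bm\Sigma_2\|_\F^2\,\EE\|(\bm\Omega_1)^\dagger\|_\F^2$ (using $\EE[\bm\Omega_2\bm{M}\bm{M}^\T\bm\Omega_2^\T] = \mathrm{tr}(\bm{M}\bm{M}^\T)\bm{I}$ for fixed $\bm{M}$ and a Gaussian $\bm\Omega_2$ — actually one must be careful about the dimension count, so I would track $\|\bm\Sigma_2\bm\Omega_2\|$ directly). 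Then $\EE\|(\bm\Omega_1)^\dagger\|_\F^2 = k/(q-k-1)$ is the standard inverse-Wishart moment (valid for $q\ge k+2$). Combining, $\mathcal{E}^2 \lesssim (1 + \tfrac{k}{q-k-1})\|\bm\Sigma_2\|_\F^2$, and $\|\bm\Sigma_2\|_\F^2 = \|\bm{B}-\llbracket\bm{B}\rrbracket_k\|_\F^2$. To land on the exact stated constant $\bigl(1+\tfrac{2eq}{\sqrt{(q-k)^2-1}}\bigr)^2$ I would instead not split into Frobenius pieces naively but bound $\EE\|\bm{B}-\bm{U}\bm{U}^\T\bm{B}\|_\F$ (not squared) via $\|\bm{B}-\llbracket\bm{B}\rrbracket_k\|_\F + \EE\|\bm\Sigma_2\bm\Omega_2(\bm\Omega_1)^\dagger\|_\F$, push the expectation inside via Jensen/Cauchy--Schwarz onto $\EE\|(\bm\Omega_1)^\dagger\|_2\cdot(\ldots)$ or use the sharper spectral-norm pseudoinverse moment $\EE\|(\bm\Omega_1)^\dagger\|_2 \le \tfrac{e\sqrt{q}}{q-k}$ (Chen--Dongarra / Halko--Martinsson--Tropp style), squaring at the end. \textbf{The main obstacle} I anticipate is bookkeeping the constants tightly enough: the clean moment bounds give $k/(q-k-1)$ and $\sqrt{k}$ factors, whereas the theorem's constant involves $q$ and $\sqrt{(q-k)^2-1}$ and an $e$ — so matching it exactly requires using the spectral-norm expected-pseudoinverse-norm bound $\EE\|(\bm\Omega_1^\T)^\dagger\|_2^2 \le e^2 q/((q-k)^2-1)$ (or a close variant) rather than the Frobenius one, and carefully choosing whether to apply Jensen before or after squaring. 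Everything else is routine linear algebra plus the PCPS range-constraint lemma.
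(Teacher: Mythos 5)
There is a genuine gap at the step you call the ``key structural fact.'' You claim that $\bm{U}\bm{U}^\T\bm{B}$ is \emph{exactly} the best rank-$k$ approximation to $\bm{B}$ whose column space is constrained to $\mathrm{range}(\bm{B}\bm{\Omega})$, and hence that $\|\bm{B}-\bm{U}\bm{U}^\T\bm{B}\|_\F \le \mathcal{E}$. This is false, and in fact the inequality runs the other way: since $\mathrm{range}(\bm{U})\subseteq\mathrm{range}(\bm{B}\bm{\Omega})$, the matrix $\bm{U}\bm{U}^\T\bm{B}$ is merely one \emph{feasible} point of the constrained problem, so $\|\bm{B}-\bm{U}\bm{U}^\T\bm{B}\|_\F \ge \mathcal{E}$. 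The true constrained optimum is $\bm{Q}\llbracket\bm{Q}^\T\bm{B}\rrbracket_k$ (with $\bm{Q}$ an orthonormal basis of $\mathrm{range}(\bm{B}\bm{\Omega})$), which depends on $\bm{Q}^\T\bm{B}$ --- exactly the transposed-sketch information the PCPS approach is designed to avoid --- whereas $\bm{U}$ is determined only by $\bm{B}\bm{\Omega}$ and minimizes the \emph{sketched} objective $\|\bm{B}\bm{\Omega}-\bm{P}\bm{B}\bm{\Omega}\|_\F$. The cited PCPS results say only that minimizing the sketched cost gives a \emph{near}-minimizer of the true cost; they do not give exact optimality of the constrained problem. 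With this link broken, your chain $\|\bm{B}-\bm{U}\bm{U}^\T\bm{B}\|_\F^2 \le \mathcal{E}^2 \le \|\bm{\Sigma}_2\|_\F^2+\|\bm{\Sigma}_2\bm{\Omega}_2\bm{\Omega}_1^\dagger\|_\F^2$ does not close: the right-hand side bounds the error of projecting onto the full (rank-$q$) range of $\bm{B}\bm{\Omega}$, and controlling the extra loss from truncating to the rank-$k$ subspace $\bm{U}$ is precisely the content of the theorem.

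The paper closes this gap differently, and you would need something of this kind. It writes $\bm{B} = \bm{B}\bm{\Omega}\bm{\Omega}_1^\dagger\bm{V}_1^\T + \bm{B}(\bm{I}-\bm{\Omega}\bm{\Omega}_1^\dagger\bm{V}_1^\T)$ and applies the triangle inequality to $(\bm{I}-\bm{U}\bm{U}^\T)\bm{B}$. On the first piece it uses the \emph{sketch}-optimality of $\bm{U}$, namely $\|(\bm{I}-\bm{U}\bm{U}^\T)\bm{B}\bm{\Omega}\|_\F \le \|(\bm{B}-\llbracket\bm{B}\rrbracket_k)\bm{\Omega}\|_\F = \|\bm{\Sigma}_2\bm{\Omega}_2\|_\F$, paying a factor $\|\bm{\Omega}_1^\dagger\|_2$; on the second piece it drops the projector and uses $\bm{V}_1^\T\bm{\Omega}\bm{\Omega}_1^\dagger=\bm{I}$ (a.s.) to see that $\bm{B}(\bm{I}-\bm{\Omega}\bm{\Omega}_1^\dagger\bm{V}_1^\T)=\bm{B}(\bm{I}-\bm{V}_1\bm{V}_1^\T)(\bm{I}-\bm{\Omega}\bm{\Omega}_1^\dagger\bm{V}_1^\T)$, giving $\|\bm{\Sigma}_2\|_\F+\|\bm{\Sigma}_2\bm{\Omega}_2\|_\F\|\bm{\Omega}_1^\dagger\|_2$. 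Altogether $\|(\bm{I}-\bm{U}\bm{U}^\T)\bm{B}\|_\F \le \|\bm{\Sigma}_2\|_\F + 2\|\bm{\Sigma}_2\bm{\Omega}_2\|_\F\|\bm{\Omega}_1^\dagger\|_2$, and then the probabilistic ingredients you correctly identified ($\EE\|\bm{\Sigma}_2\bm{\Omega}_2\|_\F^2=q\|\bm{\Sigma}_2\|_\F^2$, $\EE\|\bm{\Omega}_1^\dagger\|_2^2\le e^2q/((q-k)^2-1)$, independence of $\bm{\Omega}_1,\bm{\Omega}_2$, and $\EE[(X+Y)^2]\le(\sqrt{\EE X^2}+\sqrt{\EE Y^2})^2$) yield exactly the stated constant. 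So your probabilistic bookkeeping is on target, but the deterministic reduction needs to exploit the optimality of $\bm{U}$ on the sketch rather than an (incorrect) exact-optimality claim for the constrained problem.
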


\begin{remark}
    We suspect that the dependence on $q$ and $k$ can be improved, but this result is sufficient for our purposes. 
\end{remark}

\begin{proof}
    Define $\bm{B}_1 = \bm{U}_1 \bm{\Sigma}_1 \bm{V}_1^\T$ to be the SVD of the optimal rank-$k$ approximation and let $\bm{M}_2 = \bm{B} - \bm{B}_1 = \bm{U}_2 \bm{\Sigma}_2 \bm{V}_2^\T$ be the remainder. Define $\bm{\Omega}_1 = \bm{V}_1^\T \bm{\Omega}$ and $\bm{\Omega}_2 = \bm{V}_2^\T \bm{\Omega}$, which are independent Gaussians. Now note that\footnote{\rev{This proof technique is related to the analysis of the randomized SVD and generalized Nyström approximation in \cite[Section 3.3-3.4]{Nakatsukasa:2020}, which was inspired by an observation in \cite[Section 4]{sorensenembree:2016}; the latter attributes the observation to Ilse Ipsen.}}
\begin{align*}
    \|(\bm{I} - \bm{U} \bm{U}^\T) \bm{B}\|_\F  &\leq \|(\bm{I} - \bm{U} \bm{U}^\T) \bm{B}\bm{\Omega} \bm{\Omega}_1^\dagger \bm{V}_1^\T\|_\F + \|(\bm{I} - \bm{U} \bm{U}^\T) \bm{B}(\bm{I}-\bm{\Omega} \bm{\Omega}_1^\dagger \bm{V}_1^\T)\|_\F\\
    &\leq \|(\bm{I} - \bm{U} \bm{U}^\T) \bm{B}\bm{\Omega}\|_\F \|\bm{\Omega}_1^\dagger\|_2 + \|\bm{B}(\bm{I}-\bm{\Omega} \bm{\Omega}_1^\dagger \bm{V}_1^\T)\|_\F.\numberthis\label{eq:two_terms}
\end{align*}
First, we bound the first term in \Cref{eq:two_terms}:
\begin{equation*}
    \|(\bm{I} - \bm{U} \bm{U}^\T) \bm{B}\bm{\Omega}\|_\F \|\bm{\Omega}_1^\dagger\|_2 \leq \|\bm{B} \bm{\Omega} - \bm{B}_1 \bm{\Omega}\|_\F \|\bm{\Omega}_1^{\dagger}\|_2 = \|\bm{\Sigma}_2 \bm{\Omega}_2\|_\F \|\bm{\Omega}_1^{\dagger}\|_2.
\end{equation*}
Next, we bound the second term in \Cref{eq:two_terms}:
\begin{align*}
    \|\bm{B}(\bm{I}-\bm{\Omega} \bm{\Omega}_1^\dagger \bm{V}_1^T)\|_\F &= \rev{\|\bm{B}(\bm{I} - \bm{V}_1 \bm{V}_1^\T)(\bm{I}-\bm{\Omega} \bm{\Omega}_1^\dagger \bm{V}_1^\T)\|_\F}\\
    &= \sqrt{\|\bm{B}(\bm{I} - \bm{V}_1\bm{V}_1^\T)\|_\F^2 + \rev{\|\bm{B}(\bm{I} - \bm{V}_1\bm{V}_1^\T) \bm{\Omega} \bm{\Omega}_1^{\dagger} \bm{V}_1^\T\|_\F^2}}\\
    &= \sqrt{\|\bm{\Sigma}_2\|_\F^2 + \|\bm{\Sigma}_2 \bm{\Omega}_2 \bm{\Omega}_1^{\dagger}\|_\F^2} \\
    &\leq \|\bm{\Sigma}_2\|_\F + \|\bm{\Sigma}_2 \bm{\Omega}_2 \bm{\Omega}_1^{\dagger}\|_\F\\
    & \leq \|\bm{\Sigma}_2\|_\F + \|\bm{\Sigma}_2 \bm{\Omega}_2\|_\F \|\bm{\Omega}_1^{\dagger}\|_2.
\end{align*}
Hence, in total, we have $
    \|(\bm{I} - \bm{U} \bm{U}^\T) \bm{B}\|_\F \leq \|\bm{\Sigma}_2\|_\F + 2\|\bm{\Sigma}_2 \bm{\Omega}_2\|_\F \|\bm{\Omega}_1^{\dagger}\|_2$.
Using the inequality $\mathbb{E}[(X+Y)^2] \leq \left(\sqrt{\mathbb{E}X^2} + \sqrt{\mathbb{E}Y^2}\right)^2$, we get
\begin{equation*}
    \|(\bm{I} - \bm{U} \bm{U}^\T) \bm{B}\|_\F^2 \leq \left(\|\bm{\Sigma}_2\|_\F + 2\sqrt{\mathbb{E}\left[\|\bm{\Sigma}_2 \bm{\Omega}_2\|_\F^2 \|\bm{\Omega}_1^{\dagger}\|_2^2\right]}\right)^2.
\end{equation*}
By noting that $\bm{\Omega}_1$ and $\bm{\Omega}_2$ are independent and using $\mathbb{E}\|\bm{\Sigma}_2 \bm{\Omega}_2\|_\F^2 = q \|\bm{\Sigma}_2\|_\F^2$ \cite[Proposition 10.1]{HalkoMartinssonTropp:2011} and $\mathbb{E}\|\bm{\Omega}_1^{\dagger}\|_2^2 \leq \frac{e^2q}{(q-k)^2 - 1}$ \cite[Lemma 3.1]{Nakatsukasa:2020} we have
\begin{align*}
    \mathbb{E}\|(\bm{I} - \bm{U} \bm{U}^\T) \bm{B}\|_\F^2 &\leq \left(1 + 2\sqrt{q}\cdot \frac{e \sqrt{q}}{\sqrt{(q-k)^2-1}}\right)^2\|\bm{\Sigma}_2\|_\F^2\\
    &=\left(1 + \frac{2eq}{\sqrt{(q-k)^2-1}}\right)^2\|\bm{\Sigma}_2\|_\F^2.
\end{align*}
Noting that $\|\bm{\Sigma}_2\|_\F = \|\bm{B} - \llbracket \bm{B} \rrbracket_k\|_\F$ yields the desired result. 
\end{proof}

\subsection{Approximating the diagonal blocks}\label{section:diagonal}
In this section, we demonstrate how to approximate the diagonal blocks at each iteration of \cref{alg:HSS_greedy_meta}, while satisfying the condition in \Cref{eq:diagonal} of \Cref{theorem:HSSgreedy}. The approach follows the same idea as in \cite[Section 4]{LevittMartinsson:2024}, where the authors assumed that the matrix is exactly HSS. Here, we show that this method is robust even when the matrix is only approximately HSS.

\begin{theorem}\label{theorem:diagonal}
    Consider a matrix $\bm{B} \in \mathbb{R}^{2^{\ell + 1}k \times 2^{\ell + 1}k}$ as partitioned in \Cref{def:partitioning}. Fix some bases
    \begin{align*}
    \bm{U}_1,\ldots,\bm{U}_{2^L} \quad &\text{where } \bm{U}_i \in \mathbb{R}^{2k \times k} \text{ has orthonormal columns,}\\
     \bm{V}_1,\ldots,\bm{V}_{2^L} \quad &\text{where } \bm{V}_i \in \mathbb{R}^{2k\times k} \text{ has orthonormal columns}.
\end{align*}
Let $\bm{\Omega},\bm{\Psi} \sim \operatorname{Gaussian}(2^{\ell + 1} k,s)$, where $s \geq 2k + 2$, be independent and partition
\begin{equation*}
    \bm{\Omega} = \begin{bmatrix} \bm{\Omega}_1 \\ \bm{\Omega}_2 \\ \vdots \\ \bm{\Omega}_{2^{\ell}} \end{bmatrix}, \quad\bm{\Psi} = \begin{bmatrix} \bm{\Psi}_1 \\ \bm{\Psi}_2 \\ \vdots \\ \bm{\Psi}_{2^{\ell}} \end{bmatrix}, \quad  \bm{Y} = \bm{B} \bm{\Omega} = \begin{bmatrix} \bm{Y}_1 \\ \bm{Y}_2 \\ \vdots \\ \bm{Y}_{2^{\ell}} \end{bmatrix}, \quad \bm{Z} = \bm{B}^\T \bm{\Psi} = \begin{bmatrix} \bm{Z}_1 \\ \bm{Z}_2 \\ \vdots \\ \bm{Z}_{2^{\ell}} \end{bmatrix},
\end{equation*}
where $\bm{\Omega}_i, \bm{\Psi}_i,\bm{Y}_i, \bm{Z}_i \in \mathbb{R}^{2k \times s}$. For $i = 1,\ldots,2^{\ell}$ define
\begin{equation*}
    \bm{D}_i = (\bm{I}-\bm{U}_i \bm{U}_i^\T) \bm{Y}_i \bm{\Omega}_i^{\dagger} + \bm{U}_i \bm{U}_i((\bm{I} - \bm{V}_i \bm{V}_i^\T) \bm{Z}_i \bm{\Psi}_i^{\dagger})^\T.
\end{equation*}
Then, for each $i = 1,\ldots,2^{\ell}$ we have 
\begin{align*}
    \hspace{4em}&\hspace{-4em}
    \mathbb{E}\Big[\|\bm{B}_{ii} - \bm{D}_i - \bm{U}_i \bm{U}_i^\T(\bm{B}_{ii} - \bm{D}_i)\bm{V}_i \bm{V}_i^\T\|_\F^2 \Big]
    \\&\leq \frac{2k}{s-2k - 1} \left(\|(\bm{I} - \bm{U}_i \bm{U}_i^\T) r_i(\bm{B})\|_\F^2 + \| c_i(\bm{B})(\bm{I} - \bm{V}_i \bm{V}_i^\T)\|_\F^2 \right)
\end{align*}
\end{theorem}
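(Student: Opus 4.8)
The plan is to split the residual into a ``row'' part and a ``column'' part that decouple because of the way $\bm D_i$ is built. Write $\bm M := \bm B_{ii} - \bm D_i$ and let $\bm P := \bm I - \bm U_i\bm U_i^\T$ and $\bm Q := \bm I - \bm V_i\bm V_i^\T$ be the complementary orthogonal projectors. Then $\bm M - \bm U_i\bm U_i^\T\bm M\bm V_i\bm V_i^\T = \bm P\bm M + \bm U_i\bm U_i^\T\bm M\bm Q$, and since the first summand has columns in $\operatorname{range}(\bm U_i)^\perp$ while the second has columns in $\operatorname{range}(\bm U_i)$, the Pythagorean theorem gives
\[
\|\bm M - \bm U_i\bm U_i^\T\bm M\bm V_i\bm V_i^\T\|_\F^2 = \|\bm P\bm M\|_\F^2 + \|\bm U_i\bm U_i^\T\bm M\bm Q\|_\F^2 .
\]
So it suffices to show $\EE\|\bm P\bm M\|_\F^2 \le \tfrac{2k}{s-2k-1}\|\bm P\,r_i(\bm B)\|_\F^2$ and $\EE\|\bm U_i\bm U_i^\T\bm M\bm Q\|_\F^2 \le \tfrac{2k}{s-2k-1}\|c_i(\bm B)\,\bm Q\|_\F^2$, and then add.

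For the first estimate, observe that the second term defining $\bm D_i$ lies in $\operatorname{range}(\bm U_i)$ and is therefore annihilated by $\bm P$, so $\bm P\bm D_i = \bm P(\bm I - \bm U_i\bm U_i^\T)\bm Y_i\bm\Omega_i^\dagger = \bm P\bm Y_i\bm\Omega_i^\dagger$. Splitting the sketch into its diagonal and off-diagonal parts, $\bm Y_i = \bm B_{ii}\bm\Omega_i + r_i(\bm B)\bm\Omega_{-i}$, where $\bm\Omega_{-i}$ stacks the blocks $\bm\Omega_j$ with $j\ne i$ (so that $r_i(\bm B)\bm\Omega_{-i} = \sum_{j\ne i}\bm B_{ij}\bm\Omega_j$) and is independent of $\bm\Omega_i$. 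Since $\bm\Omega_i \in \R^{2k\times s}$ with $s \ge 2k+2$ has full row rank almost surely, $\bm\Omega_i\bm\Omega_i^\dagger = \bm I_{2k}$, so the diagonal piece cancels exactly and $\bm P\bm M = -\bm P\,r_i(\bm B)\,\bm\Omega_{-i}\bm\Omega_i^\dagger$. Conditioning on $\bm\Omega_i$ and using the elementary Gaussian identity $\EE\|\bm W\bm G\bm C\|_\F^2 = \|\bm W\|_\F^2\|\bm C\|_\F^2$ for a standard Gaussian $\bm G$ and fixed $\bm W,\bm C$, this yields $\EE\|\bm P\bm M\|_\F^2 = \|\bm P\,r_i(\bm B)\|_\F^2\cdot\EE\|\bm\Omega_i^\dagger\|_\F^2$. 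Finally $\|\bm\Omega_i^\dagger\|_\F^2 = \tr\bigl((\bm\Omega_i\bm\Omega_i^\T)^{-1}\bigr)$, and $\bm\Omega_i\bm\Omega_i^\T$ is a $2k\times 2k$ Wishart matrix with $s$ degrees of freedom whose inverse has expected trace $\tfrac{2k}{s-2k-1}$ (finite exactly because $s\ge 2k+2$), giving the first estimate.

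The second estimate is symmetric. The first term defining $\bm D_i$ is annihilated on the left by $\bm U_i\bm U_i^\T$, so $\bm U_i\bm U_i^\T\bm M\bm Q = \bm U_i\bm U_i^\T\bigl(\bm B_{ii} - (\bm\Psi_i^\dagger)^\T\bm Z_i^\T\bigr)\bm Q$; dropping the leading projector and transposing, $\|\bm U_i\bm U_i^\T\bm M\bm Q\|_\F \le \|\bm Q\bigl(\bm B_{ii}^\T - \bm Z_i\bm\Psi_i^\dagger\bigr)\|_\F$. Writing $\bm Z_i = \bm B_{ii}^\T\bm\Psi_i + c_i(\bm B)^\T\bm\Psi_{-i}$ (with $\bm\Psi_{-i}$ stacking $\bm\Psi_j$, $j\ne i$, independent of $\bm\Psi_i$) and using $\bm\Psi_i\bm\Psi_i^\dagger = \bm I_{2k}$, the diagonal piece cancels again, so $\bm B_{ii}^\T - \bm Z_i\bm\Psi_i^\dagger = -c_i(\bm B)^\T\bm\Psi_{-i}\bm\Psi_i^\dagger$, and the same conditioning-plus-inverse-Wishart argument gives $\EE\|\bm U_i\bm U_i^\T\bm M\bm Q\|_\F^2 \le \|\bm Q\,c_i(\bm B)^\T\|_\F^2\cdot\tfrac{2k}{s-2k-1} = \tfrac{2k}{s-2k-1}\|c_i(\bm B)\,\bm Q\|_\F^2$. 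Adding the two estimates proves the claim.

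I expect the main obstacle to be purely the bookkeeping that isolates which half of $\bm D_i$ survives each projection: the definition of $\bm D_i$ is engineered so that its two summands are supported on $\operatorname{range}(\bm U_i)^\perp$ and $\operatorname{range}(\bm U_i)$ respectively, and this — together with $\bm\Omega_i\bm\Omega_i^\dagger = \bm\Psi_i\bm\Psi_i^\dagger = \bm I_{2k}$ — is precisely what collapses each residual to a single off-diagonal HSS block times $\bm\Omega_{-i}\bm\Omega_i^\dagger$ (resp. $\bm\Psi_{-i}\bm\Psi_i^\dagger$), whose squared Frobenius norm has a clean inverse-Wishart expectation. The remaining ingredients (the Pythagorean split, the Gaussian second-moment identity, and the inverse-Wishart trace formula) are standard; the only points requiring care are that $s\ge 2k+2$ is used both for almost-sure full row rank of $\bm\Omega_i,\bm\Psi_i$ and for finiteness of $\EE\|\bm\Omega_i^\dagger\|_\F^2$.
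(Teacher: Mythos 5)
Your proposal is correct and follows essentially the same route as the paper's proof: the same Pythagorean split of the residual into a row part and a column part, the same cancellation of the diagonal block via $\bm{\Omega}_i\bm{\Omega}_i^{\dagger} = \bm{\Psi}_i\bm{\Psi}_i^{\dagger} = \bm{I}$, and the same Gaussian second-moment and inverse-Wishart (i.e.\ \cite[Props.~10.1--10.2]{HalkoMartinssonTropp:2011}) expectations after conditioning. The only difference is cosmetic: you project the residual directly rather than first expanding $\bm{D}_i$ in full as the paper does.
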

\begin{proof}
    For $i = 1,\ldots,2^{\ell}$ define
    \begin{equation*}
        \bm{G}_i = \begin{bmatrix} \bm{\Omega}_1 \\ \vdots \\ \bm{\Omega}_{i-1} \\ \bm{\Omega}_{i+1}\\ \vdots \\ \bm{\Omega}_{2^{\ell}} \end{bmatrix}, \quad \bm{H}_i = \begin{bmatrix} \bm{\Psi}_1 \\ \vdots \\ \bm{\Psi}_{i-1} \\ \bm{\Psi}_{i+1}\\ \vdots \\ \bm{\Psi}_{2^{\ell}} \end{bmatrix},
    \end{equation*}
    and note that $\bm{G}_i, \bm{H}_i, \bm{\Omega}_i$, and $\bm{\Psi}_i$ are independent. Furthermore, note that $\bm{\Omega}_i \bm{\Omega}_i^{\dagger} = \bm{\Psi}_i \bm{\Psi}_i^{\dagger} = \bm{I}$ almost surely. Hence,
    \begin{align*}
        (\bm{I} - \bm{U}_i \bm{U}_i^\T) \bm{Y}_i \bm{\Omega}_i^{\dagger} &= (\bm{I} - \bm{U}_i \bm{U}_i^\T)\bm{B}_{ii} + (\bm{I} - \bm{U}_i \bm{U}_i^\T) r_i(\bm{B}) \bm{G}_i \bm{\Omega}_i^{\dagger};\\
        (\bm{I} - \bm{V}_i \bm{V}_i^\T) \bm{Z}_i \bm{\Psi}_i^{\dagger} &= (\bm{I} - \bm{V}_i \bm{V}_i^\T)\bm{B}_{ii}^\T + (\bm{I} - \bm{V}_i \bm{V}_i^\T) c_i(\bm{B})^\T \bm{H}_i \bm{\Psi}_i^{\dagger}. 
    \end{align*}
    Consequently,
    \begin{align*}
        \bm{D}_i = & (\bm{I} - \bm{U}_i \bm{U}_i^\T)\bm{B}_{ii} + (\bm{I} - \bm{U}_i \bm{U}_i^\T) r_i(\bm{B}) \bm{G}_i \bm{\Omega}_i^{\dagger} + \\
        & \hspace{2em}+ \bm{U}_i \bm{U}_i^\T \bm{B}_{ii}(\bm{I} - \bm{V}_i \bm{V}_i^\T) + \bm{U}_i \bm{U}_i^\T(\bm{\Psi}_i^{\dagger})^\T \bm{H}_i^\T c_i(\bm{B}) (\bm{I} - \bm{V}_i \bm{V}_i^\T)\\
        = & \bm{B}_{ii} - \bm{U}_i \bm{U}_i^\T \bm{B}_{ii} \bm{V}_i \bm{V}_i^\T + (\bm{I} - \bm{U}_i \bm{U}_i^\T) r_i(\bm{B}) \bm{G}_i \bm{\Omega}_i^{\dagger} 
        \\& \hspace{2em}+ \bm{U}_i \bm{U}_i^\T(\bm{\Psi}_i^{\dagger})^\T \bm{H}_i^\T c_i(\bm{B}) (\bm{I} - \bm{V}_i \bm{V}_i^\T).
    \end{align*}
    Furthermore, noting that $\bm{U}_i \bm{U}_i^\T \bm{D}_{i} \bm{V}_i \bm{V}_i^\T = \bm{0}$ \rev{and applying Pythagoras theorem yields}
    \begin{align*}
        \hspace{4em}&\hspace{-4em}
        \|\bm{B}_{ii} - \bm{D}_i - \bm{U}_i \bm{U}_i^\T(\bm{B}_{ii} - \bm{D}_i)\bm{V}_i \bm{V}_i^\T\|_\F^2 
        \\&= \|(\bm{I} - \bm{U}_i \bm{U}_i^\T) r_i(\bm{B}) \bm{G}_i \bm{\Omega}_i^{\dagger}\|_\F^2 + \|\bm{U}_i \bm{U}_i^\T(\bm{\Psi}_i^{\dagger})^\T \bm{H}_i^\T c_i(\bm{B}) (\bm{I} - \bm{V}_i \bm{V}_i^\T)\|_\F^2
        \\&\leq \|(\bm{I} - \bm{U}_i \bm{U}_i^\T) r_i(\bm{B}) \bm{G}_i \bm{\Omega}_i^{\dagger}\|_\F^2 + \|(\bm{\Psi}_i^{\dagger})^\T \bm{H}_i^\T c_i(\bm{B}) (\bm{I} - \bm{V}_i \bm{V}_i^\T)\|_\F^2.
    \end{align*}
    Taking expectations, using the independence of $\bm{G}_i, \bm{H}_i, \bm{\Omega}_i$, and $\bm{\Psi}_i$ and applying \cite[Proposition 10.1-10.2]{HalkoMartinssonTropp:2011} yields the desired result. 
\end{proof}

\subsection{A matrix-vector query algorithm for HSS approximation}
\label{sec:our_alg}

We now show how to combine the ideas of \Cref{section:blocknullification,section:pcps,section:diagonal} to give an implementation of the greedy HSS approximation algorithm (\Cref{alg:HSS_greedy_meta}) in the matvec query model. This implementation is described as \Cref{alg:HSS_greedy_matvec} and analyzed in \Cref{theorem:HSS_matvec}. Our algorithm is a variant on the algorithm of~\cite{LevittMartinsson:2024} with a key difference, discussed below.

An obstacle to proving an accuracy guarantee 
for the algorithm of~\cite{LevittMartinsson:2024} is the reuse of sketches at each level of the hierarchy. In particular, while at level $L$, one can utilize block nullification to sketch the column and row blocks with Gaussian inputs, this is not the case at subsequent levels. Instead, the implicit sketches are with the same initial Gaussian vectors multiplied by successively more approximated telescoping factors recovered in previous levels~\cite[Remark 4.3]{LevittMartinsson:2024}. 
This introduces problematic  dependencies in the sketching matrices across levels.  

Another obstacle is presented when the matrix $\bm{A}$ is only approximately HSS. In this case, the sketches with $\bm{A}^{(\ell + 1)}$ are corrupted by errors accumulated from previous levels. To see this, consider the factorization computed by \Cref{alg:HSS_greedy_meta} at level $L$:
\begin{equation*}
    \bm{A} = \bm{U}^{(L)} \bm{A}^{(L)} (\bm{V}^{(L)})^\T + \bm{D}^{(L)} + \bm{E},
\end{equation*}
where $\bm{E}$ is the error due to $\bm{A}$ not being HSS. 
In \cite{LevittMartinsson:2024}, where $\bm{E} = \bm{0}$, the authors show that it is possible to compute a sketch of $\bm{A}^{(L)}$ using the formula:
\begin{equation*}
    \bm{A}^{(L)} \underbrace{\left[(\bm{V}^{(L)})^\T\bm{\Omega}\right]}_{\text{sketch matrix}} = (\bm{U}^{(L)})^\T \left[\bm{A}\bm{\Omega}- \bm{D}^{(L)} \bm{\Omega}\right].
\end{equation*}
Hence, if $\bm{A}$ is exactly HSS we can compute a sketch of $\bm{A}^{(L)}$ from a sketch of $\bm{A}$. Consequently, the algorithm in \cite{LevittMartinsson:2024}  recovers an HSS-factorization of $\bm{A}$ by post-processing the set $\{\bm{\Omega}, \bm{\Psi}, \bm{A} \bm{\Omega}, \bm{A}^\T \bm{\Psi} \}$, and all matrix-vector products  can be done in parallel. However, when $\bm{A}$ is not exactly HSS,  $\bm{E} \neq \bm{0}$, and applying the same idea gives
\begin{equation*}
    \bm{A}^{(L)} \underbrace{\left[(\bm{V}^{(L)})^\T\bm{\Omega}\right]}_{\text{sketch matrix}} + \underbrace{(\bm{U}^{(L)})^\T\bm{E} \bm{\Omega}}_{\text{error term}} = (\bm{U}^{(L)})^\T \left[\bm{A}\bm{\Omega}- \bm{D}^{(L)} \bm{\Omega}\right].
\end{equation*}
Thus, any sketches with $\bm{A}^{(L)}$ will be corrupted by the error, which propogates through subsequent levels, and one cannot theoretically guarantee the quality of the approximations needed in~\Cref{alg:HSS_greedy_meta}. 

For the above reasons, in~\Cref{alg:HSS_greedy_matvec}, we instead use fresh random sketches at each level. As a consequence, the query complexity increases to $O(Lk)$, as compared to $O(k)$ in \cite{LevittMartinsson:2024}.  Additionally, the matrix-vector products with $\bm{A}^{(\ell+1)}$ are adaptive due to the recursion in \cref{eq:matvec_recursion}, and thus all queries cannot be performed in parallel.

\begin{algorithm}
\caption{Greedy HSS approximation from matvecs}
\label{alg:HSS_greedy_matvec}
\textbf{input:} A matrix $\bm{A} \in \mathbb{R}^{N \times N}$ implicitly given through matrix-vector products. A number of columns for the sketching matrices $s$, where $s \geq 3k+2$. \\
\textbf{output:} An approximation $\bm{B} \in \HSS(L,k)$ to $\bm{A}$ with telescoping factorization as in \Cref{eq:telescoping_factorization}.
\begin{algorithmic}[1]
    \State $\bm{A}^{(L+1)} = \bm A$
    \For{$\ell = L, L-1, \ldots, 1$}
    \parState{Sample $\bm \Omega^{(\ell)}, \widetilde{\bm \Omega}^{(\ell)}, \bm  \Psi^{(\ell)}, \widetilde{\bm{\Psi}}^{(\ell)} \sim \Gaussian\left(2^{(\ell + 1)}k, s\right)$ partitioned as per \Cref{eq:omega_partition}.}
    \State Compute the following by recursion as in \Cref{eq:matvec_recursion}. Partition them as per \Cref{eq:Ypartition}: \begin{align*}&\bm Y^{(\ell)} = \bm{A}^{(\ell+1)} \bm \Omega ^{(\ell)} \qquad \qquad &\widetilde{\bm{Y}}^{(\ell)} = \bm{A}^{(\ell+1)} \widetilde{\bm{\Omega}} ^{(\ell)} \\ &\bm Z^{(\ell)} = \bm{A}^{(\ell+1)\T} \bm \Psi ^{(\ell)} \qquad  &\widetilde{\bm{Z}}^{(\ell)} = \bm{A}^{(\ell+1)\T} \widetilde{\bm{\Psi}} ^{(\ell)}\end{align*}
    \For{$i = 1,2,\ldots,2^\ell$}
    \State Let $\bm{P}_i^{(\ell)}$ be an orthonormal basis of $\text{nullspace}\left(\bm{\Omega}_i^{(\ell)}\right)$.
    \State Let $\bm U_i^{(\ell)} \in \mathbb{R}^{2k \times k}$ be the top $k$ left singular vectors of $\bm{Y}_i^{(\ell)} \bm{P}_i^{(\ell)}$.
    \State Let $\bm{Q}_i^{(\ell)}$ be an orthonormal basis of $\text{nullspace}\left(\bm{\Psi}_i^{(\ell)}\right)$.
    \State Let $\bm{V}_i^{(\ell)}\in \mathbb{R}^{2k \times k}$ be the top $k$ left singular vectors of $\bm{Z}_i^{(\ell)} \bm{Q}_i^{(\ell)}$.
    \State Compute \begin{align*}\bm D_i^{(\ell)} = & \left(\bm I - \bm U_i^{(\ell)} (\bm U_i^{(\ell)})^\T \right)\widetilde{\bm{Y}}_i^{(\ell)} (\widetilde{\bm{\Omega}} _i^{(\ell)})^\dagger\\ &+ \bm U_i^{(\ell)} (\bm U_i^{(\ell)})^\T \left[\left(\bm I - \bm V_i^{(\ell)} (\bm V_i^{(\ell)})^\T\right) \widetilde{\bm{Z}}_i^{(\ell)}(\widetilde{\bm{\Psi}} _i^{(\ell)})^\dagger\right]^\T \end{align*}

   \EndFor

   \State $\bm U^{(\ell)} = \text{blockdiag} (\bm{U}_1^{(\ell)},\ldots,\bm{U}_{2^{\ell}}^{(\ell)})$.
   \State $\bm V^{(\ell)} = \text{blockdiag} (\bm{V}_1^{(\ell)},\ldots,\bm{V}_{2^{\ell}}^{(\ell)})$.
   \State $\bm D^{(\ell)} = \text{blockdiag} (\bm{D}_1^{(\ell)},\ldots,\bm{D}_{2^{\ell}}^{(\ell)})$.
   \State $\bm A^{(\ell)} = (\bm{U}^{(\ell)})^\T \left(\bm{A}^{(\ell+1)} - \bm{D}^{(\ell)}\right)\bm{V}^{(\ell)}$. 
   \EndFor
\State $\bm{D}^{(0)} = \bm{A}^{(1)}$ \Comment{Computed by $\bm{A}^{(1)} \cdot \bm{I}_{2k}$.}
\end{algorithmic}
\end{algorithm}

\begin{proof}[Proof of \Cref{theorem:HSS_matvec}]
    First, note that by the discussion in \Cref{section:pcps}, $\bm{Y}_i^{(\ell)} \bm{P}_{i}^{(\ell)}$ is a sketch of $r_i(\bm{A}^{(\ell + 1)})$ with a standard Gaussian random matrix with $s-2k$ columns. Hence, by \Cref{thm:range_finder} with $q = s-2k$ we have
    \begin{align}\label{eq:quasi1}
        &\mathbb{E}\|r_i(\bm{A}_i^{(\ell + 1)}) - \bm{U}_i^{(\ell)}\bm{U}_i^{(\ell)\T} r_i(\bm{A}^{(\ell + 1)})\|_\F^2 
        \\\nonumber&\hspace{6em}\leq \left(1 + \frac{2e(s-2k)}{\sqrt{(s-3k)^2-1}}\right)^2\|r_i(\bm{A}_i^{(\ell + 1)})-\llbracket r_i(\bm{A}_i^{(\ell + 1)})\rrbracket_k\|_\F^2.
    \end{align}
    By a similar argument, we have
    \begin{align}\label{eq:quasi2}
        &\mathbb{E}\|c_i(\bm{A}_i^{(\ell + 1)}) - c_i(\bm{A}^{(\ell + 1)})\bm{V}_i^{(\ell)}\bm{V}_i^{(\ell)\T}\|_\F^2 
        \\\nonumber&\hspace{6em}\leq \left(1 + \frac{2e(s-2k)}{\sqrt{(s-3k)^2-1}}\right)^2 \|c_i(\bm{A}_i^{(\ell + 1)})-\llbracket c_i(\bm{A}_i^{(\ell + 1)})\rrbracket_k\|_\F^2.
    \end{align}
    In addition, by the independence of $\bm{U}_i^{(\ell)}, \bm{V}_i^{(\ell)}, \widetilde{\bm{\Omega}}^{(\ell)},$ and $\widetilde{\bm{\Psi}}^{(\ell)}$  and \Cref{theorem:diagonal},
    \begin{align}\label{eq:quasi3}
    &\mathbb{E}\left[\|\bm{B}_{ii} - \bm{D}_i - \bm{U}_i \bm{U}_i^\T(\bm{B}_{ii} - \bm{D}_i)\bm{V}_i \bm{V}_i^\T\|_\F^2|\bm{U}_i^{(\ell)}, \bm{V}_i^{(\ell)}\right] 
    \\\nonumber&\hspace{6em}\leq \frac{2k}{s-2k - 1} \left(\|(\bm{I} - \bm{U}_i \bm{U}_i^\T) r_i(\bm{B})\|_\F^2 + \| c_i(\bm{B})(\bm{I} - \bm{V}_i \bm{V}_i^\T)\|_\F^2 \right).
\end{align}
Combining \Cref{eq:quasi1,eq:quasi3} with \Cref{theorem:HSSgreedy} yields the desired result.

Note that the matrices $\bm{A}^{(\ell+1)}$ for $\ell = L,\ldots,1$ are never formed explicitly, but are only accessed  through matvecs with $\bm{A}$ as described in \cref{eq:matvec_recursion}. Hence, the cost of computing matvecs with $\bm{A}^{(\ell+1)}$ comes from performing the operations in the recursion \cref{eq:matvec_recursion} and the matvecs with $\bm{A}$. Let $c_{\ell+1}$ be the cost of computing $s$ matvecs with $\bm{A}^{(\ell+1)}$ and its transpose using the recursion in \eqref{eq:matvec_recursion}, and let $C$ be the cost of computing $s$ matvecs with $\bm{A}$ and its transpose. Then, we have $c_{\ell+1} = O(2^{\ell}k^2 s) + c_{\ell+2}$ with $c_{L+1}=C$. Consequently, summing over $\ell = 1,\ldots,L$ and using $L = \log_2(N/k)$ the total cost of computing these products is $O(Nks L + LC)$. 
For a fixed level $\ell$, the additional runtime required by the inner for-loop and sampling the random Gaussian matrices is $O(2^{\ell}(k^2s + ks^2))$. Summing over $\ell$ and using $L = \log_2(N/k)$ yields an additional runtime of $O(Nks + Ns^2) = O(Ns^2)$, assuming $s \geq k$. Hence, the total cost of \Cref{alg:HSS_greedy_matvec} is $O(sL)$ matvecs with $\bm{A}$ and its transpose, and $O(NksL + Ns^2)$ additional runtime. 
\end{proof}

\section{Numerical experiments}\label{sec:experiments}
In this section, we compare our \Cref{alg:HSS_greedy_matvec} to \cite[Algorithm 3.1]{LevittMartinsson:2024}, on which it is based. 
All experiments were performed on a Macbook, and Python scripts to reproduce the figures in this section can be found at \url{https://github.com/NoahAmsel/HSS-approximation}.

Recall that \cite[Algorithm 3.1]{LevittMartinsson:2024} reuses the same random sketching matrix across levels. Therefore, it only requires only $O(k)$ matvec queries with $\bm{A}$, which compares favorably to $O(Lk)$ matvec queries required by \Cref{alg:HSS_greedy_matvec}. 
However, as discussed in \Cref{sec:our_alg}, the reuse of randomness leads to difficult to analyze dependencies in error terms across levels. As such, no strong theoretical approximation guarantees are known for \cite[Algorithm 3.1]{LevittMartinsson:2024} and, as we will see shortly, for a fixed sketch size, the algorithm is indeed less accurate than our variant that uses fresh sketches for each level. It thus remains an interesting open question to develop an $O(k)$  query algorithm that matches the theoretical performance of \Cref{alg:HSS_greedy_matvec}.

 
To investigate the effect reusing random sketches across levels, we run both methods on a variety of different test matrices, discussed below.

\subsubsection*{Inverse of a banded matrix}
A matrix $\bm{M}$ is said to have bandwidth $b$ if $\bm{M}_{ij} = 0$ whenever $|i-j|  > b$. 
Let $N = 2^{L+1}$ and let $\bm{M} \in \R^{N \times N}$ have bandwidth $b = 2k + 1$. Then $\bm{A} := \bm{M}^{-1}$ is HSS with rank $2k$ \cite[Section 3.9]{Hackbusch:2015}. We generate random symmetric banded $\bm{M}$ and approximate $\bm{A}$ with a matrix $\bm{B}\in \HSS(L,k)$ (i.e., smaller rank than needed to achieve zero error). Matrix-vector products with $\bm{A}$ are performed using a sparse Cholesky factorization of $\bm{M}$. In our experiments, we set $b = 17, N = 4096, L = 9$, and $k = 8$. 

\subsection*{Schur complement of grid graph Laplacian}
Our second model problem comes from \cite{LevittMartinsson:2024} and is inspired by sparse direct solvers based on nested dissection.
Consider an $N \times 51$ grid graph $G$ whose vertices are partitioned into sets $V_1, V_2$ and $V_3$ as shown in \Cref{fig:enter-label}.
$G$'s Laplacian matrix can be partitioned into blocks as follows:
\[ \bm{L}_G = \begin{bmatrix}
\bm{L}_{11} & \bm{0} & \bm{L}_{13} \\
\bm{0} & \bm{L}_{22} & \bm{L}_{23} \\
\bm{L}_{31} & \bm{L}_{32} & \bm{L}_{33} \\
\end{bmatrix} \]
The zero blocks appear because $V_3$ is a graph separator.
Due to this structure, the Schur complement of $\bm{L}_G$ with respect to the top-left block, $\blockdiag(\bm{L}_{11},\bm{L}_{22})$, is the $N\times N$ matrix $\bm{A} = \bm{L}_{33} - \bm{L}_{31}\bm{L}_{11}^{-1}\bm{L}_{13} - \bm{L}_{32}\bm{L}_{22}^{-1}\bm{L}_{23}$.
Efficient products with $\bm{A}$ are available from sparse Cholesky factorizations of $\bm{L}_{11}$ and $\bm{L}_{22}$. In our experiments, we set $N = 1280$ and  approximate $\vec{A}$ with $\bm{B}\in \HSS(L,k)$ where $L = 9$, and $k = 8$. 

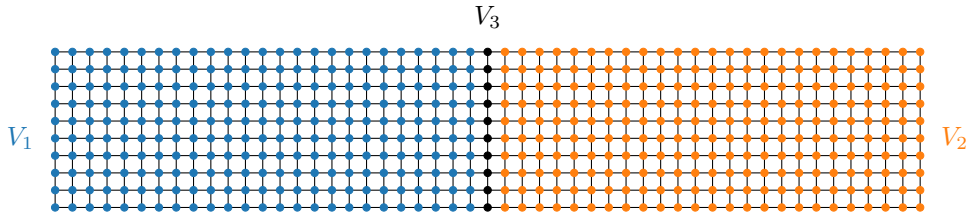
\begin{figure}
    \centering
    \begin{tikzpicture}[scale=.23]
    \def\height{10}
    \def\width{51}
    \def\splitl{25}
    \def\split{26}
    \def\splitr{27}

    \foreach \x in {1,...,\width}{
        \draw (\x,1) -- (\x,\height);
    }

    \foreach \y in {1,...,\height}{
        \draw (1,\y) -- (\width,\y);
    }

    \foreach \x in {1,...,\splitl}{
        \foreach \y in {1,...,\height}{
            \filldraw[mplb] ({\x},\y) circle (6pt);
        }
    }
    \foreach \y in {1,...,\height}{
        \filldraw[black] (\split,\y) circle (6pt);
    }
    \foreach \x in {\splitr,...,\width}{
        \foreach \y in {1,...,\height}{
            \filldraw[mplo] (\x,\y) circle (6pt);
        }
    }

    \node[text=mplb] at (-1,5) {$V_1$};
    \node[text=mplo] at (53,5) {$V_2$};
    \node[text=black] at (26,12) {$V_3$};
\end{tikzpicture}
    \caption{$N \times 51$ grid graph used for the second model problem, following the numerical experiments in \cite{LevittMartinsson:2024}. $N=10$ is pictured.}
    \label{fig:enter-label}
\end{figure}

\subsection*{Boundary integral equation}
Let $D$ be a domain in the plane with boundary $\partial D$ and let $f : \partial D \to \R$.
Parameterizing the solution using a single layer potential, it can be shown that the Laplace problem on $D$ with Neumann boundary condition $f$ is equivalent to the following integral equation for $u : \partial D \to \R$:
\begin{equation}\label{eq:bie}
    \frac12 \sigma(x) - \frac1{2\pi}\int_{\partial D} \frac{n(x) \cdot (x-y)}{\|x-y\|^2}\sigma(y) dy = f(x) \qquad \text{for all } x \in \partial D.
    \end{equation}
Here, $n(x)$ is the outwards pointing unit normal vector to $\partial D$ at $x$.
When $\partial D$ is discretized using $N$ quadrature nodes via the Nyström method, this becomes a linear system in $N$ variables.
Using the  chunkIE library \cite{chunkie}, we construct the matrix $\bm{A}$ for this linear system where $D$ is the star-shaped domain in \Cref{fig:star_domain}. In our experiments, we set $N = 1664$ and  approximate $\vec{A}$ with $\bm{B}\in \HSS(L,k)$ where $L = 5,$ and $k = 30$. 


\begin{figure}
\centering
\includegraphics[width=0.35\textwidth]{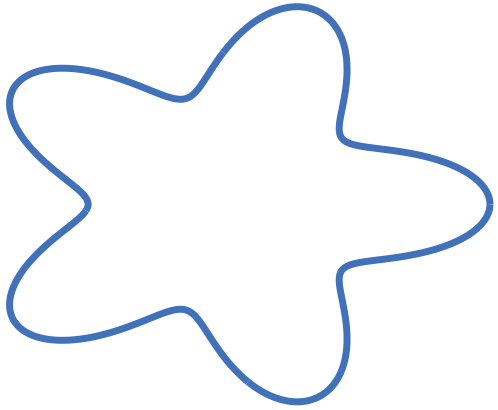}
\caption{Domain $D$ used for the boundary integral equation model problem in \Cref{eq:bie}.}
\label{fig:star_domain}
\end{figure}

\subsection*{Hard matrix from \Cref{theorem:lower_bound}}
This is the matrix defined in \Cref{eq:hard_instance}, which is far from HSS and is designed to be difficult, even for the explicit-access version of our algorithm. We set $N=32, L=4, k=1,$ and $\delta=0.1$, where $\delta$ is the parameter defined in the proof of \Cref{theorem:lower_bound}; see \Cref{section:lowerbound} and \Cref{sec:lower_bound_proof} for details. 
\begin{figure}
\begin{center}
\begin{subfigure}[b]{\textwidth}
\centering
\includegraphics[width=\textwidth]{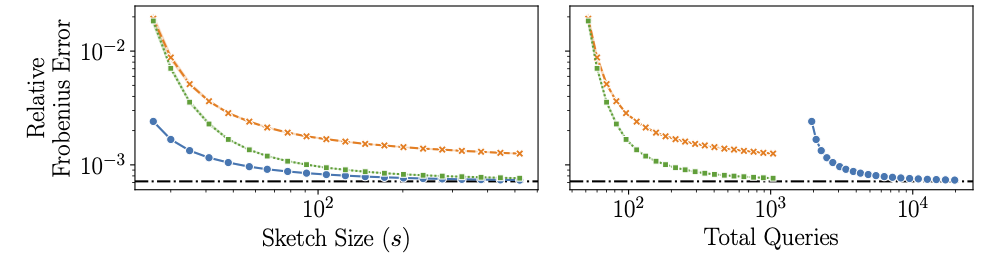}
\caption{Inverse of a banded matrix}\label{fig:inversebanded}
\end{subfigure}
\begin{subfigure}[b]{\textwidth}
\centering
\includegraphics[width=\textwidth]{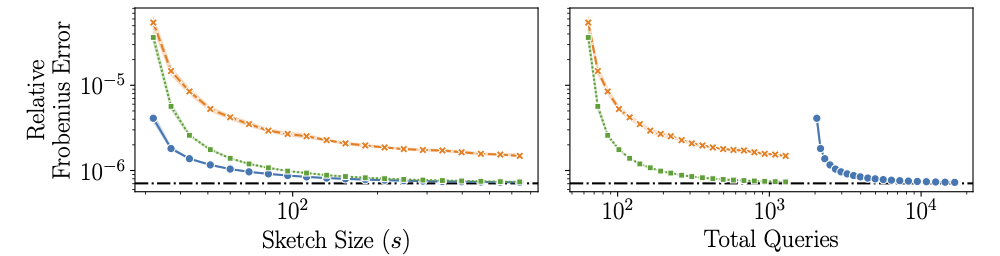}
\caption{Schur complement of graph Laplacian.}\label{fig:schurcomplement}
\end{subfigure}
\begin{subfigure}[b]{\textwidth}
\centering
\includegraphics[width=\textwidth]{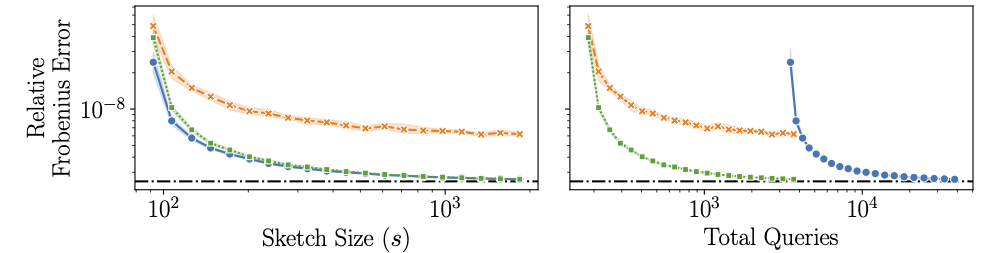}
\caption{Boundary integral equation.}\label{fig:bie}
\end{subfigure}
\begin{subfigure}[b]{\textwidth}
\centering
\includegraphics[width=\textwidth]{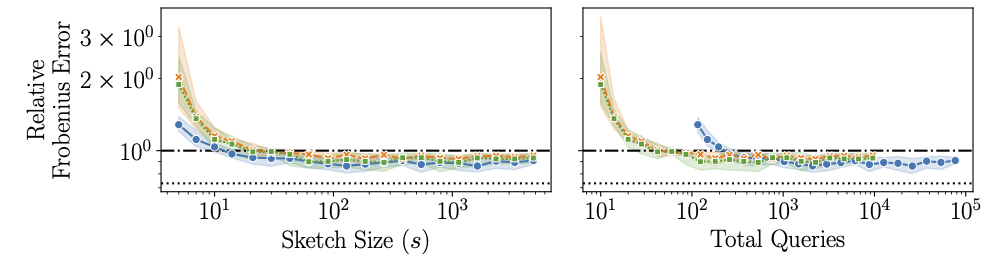}
\caption{Hard construction from \Cref{theorem:lower_bound} as defined in \Cref{eq:hard_instance}. The relative error of the optimal HSS approximation is known to be $\approx 1/\sqrt{2}$ for small $\delta$, plotted as the dotted line \includegraphics{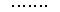}.}
\label{fig:experiment hard}
\end{subfigure}
\end{center}
\caption{Experimental results of HSS approximation on four model problems. Each line shows a different algorithm: \includegraphics{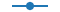}\Cref{alg:HSS_greedy_matvec} (fresh sketches at each level\rev{; requires $4sL$ matvec queries}), \includegraphics{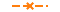} \cite[Algorithm 3.1]{LevittMartinsson:2024} (reused sketches across levels; basis found by pivoted QR\rev{; requires $4s$ matvec queries}), \includegraphics{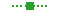}reused sketches with the basis found by SVD \rev{(requires $4s$ matvec queries)}, \includegraphics{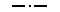}the version of \Cref{alg:HSS_greedy_meta} for explicit matrices (page \pageref{page:explicit}). Shaded regions show range over ten runs, though it is sometimes imperceptible.}
\label{fig:all_numerical}
\end{figure}

\subsection{Results}
We implement and run both \cite[Algorithm 3.1]{LevittMartinsson:2024} and \Cref{alg:HSS_greedy_matvec} on the test problems described above.
To isolate the effect of the reuse of random sketches, we also address another implementation difference between the algorithms. Specifically, \cite{LevittMartinsson:2024} computes the bases $\bm{U}^{(\ell)}$ and $\bm{V}^{(\ell)}$ using a pivoted QR decomposition, while \Cref{alg:HSS_greedy_matvec} uses the SVD. To remove any influence this minor difference in implementation may have on the results, we also implement a modified version of \cite[Algorithm 3.1]{LevittMartinsson:2024} in which we replace the pivoted QR step with an SVD. 
We run \Cref{alg:HSS_greedy_matvec} (fresh sketches), \cite[Algorithm 3.1]{LevittMartinsson:2024} (reused sketches, QR), and our modification of the latter (reused sketches, SVD) with a range of sketch sizes $s$. Following \Cref{theorem:HSS_matvec}, we  set the sketch size $s$ to be at least $3k + 2$.
For each $s$, we run each algorithm ten times and measure the relative errors $\|\bm{A} - \bm{B}\|_\F/\|\bm{A}\|_\F$. 

As a baseline, we also run the version of \Cref{alg:HSS_greedy_meta} for explicit matrices as outlined in \Cref{section:nearoptimal}. This version computes rank-$k$ truncated SVDs of the block-rows and block-columns of $\bm{A}^{(\ell)}$, and therefore requires explicit access to the entries of $\bm{A}$. While it does not necessary return an \emph{optimal} HSS approximation (we do not know any efficient algorithms to do so), we expect it to be more accurate than the matvec methods that rely on random sketching in most cases, so provides a target accuracy for those methods.

\Cref{fig:all_numerical} plots the results of all experiments.
For each test matrix, the left side plots the relative approximation error of each algorithm as a function of the sketch size $s$. The right side plots it as a function of the total number of matrix-vector queries with $\bm{A}$ and $\bm{A}^\T$ across all $L$ levels of the algorithm (``Total Queries''), which is $4s$ for \cite[Algorithm 3.1]{LevittMartinsson:2024} and $4Ls$ for \Cref{alg:HSS_greedy_matvec}.
Lines show the mean over ten runs of each algorithm, and shaded regions show the range.

The results show that the reuse of randomness in \cite[Algorithm 3.1]{LevittMartinsson:2024} does have an adverse effect on the error as compared to \Cref{alg:HSS_greedy_matvec}. 
However, for a fixed number of \emph{total} matrix-vector products, \cite[Algorithm 3.1]{LevittMartinsson:2024} has lower error than \Cref{alg:HSS_greedy_matvec}. Hence, while necessary for our theoretical guarantees, for the problems tested, \rev{using independent} sketches per level does not lead to a more efficient algorithm. \rev{In practice, we thus recommend reusing sketches across levels.} This finding leaves us \rev{hopeful} that it might be possible to improve on our matvec bound of $\rev{O(k \log(N/k))}$ by the logarithmic factor via a more refined analysis of sketch reuse. \rev{Furthermore, the experiments indicate that the modification of \cite[Algorithm 3.1]{LevittMartinsson:2024}, which computes the bases using the SVD, performs better than the version using the pivoted QR algorithm.}

In our analysis, we view \Cref{alg:HSS_greedy_matvec} as a randomized approximation to the explicit-access version of the method in \Cref{alg:HSS_greedy_meta}.
Interestingly, \Cref{fig:experiment hard} shows that in some cases, the matrix-vector query version can perform slightly better than the explicit one.
For the matrix \Cref{eq:hard_instance}, the randomness of the sketched version seems to help the algorithm avoid getting stuck in a sort of ``local minimum''. Specifically, the randomixed algorithm will not find the exact top singular vectors when constructing $\bm{U}^{(L)}$ and $\bm{V}^{(L)}$ in the first level, which by construction of this matrix, would yield a suboptimal approximation.
In effect, the randomness of \Cref{alg:HSS_greedy_matvec} washes out the contribution of $\delta$ in \Cref{eq:hard_instance}, whereas the explicit version is tricked. Due to the adversarial nature of this construction, we believe the randomized matvec version is unlikely to ever perform better in practice, but it is interesting to observe nonetheless.

\begin{paragraph}{\rev{Acknowledgments}} 
\rev{We thank the referees for helpful comments on this work. We are indebted to an anonymous referee for outlining how to prove the existence of an optimal HSS approximation.}
\end{paragraph}
\bibliographystyle{siam}
\bibliography{refs}

\vfill
\section*{Disclaimer}

This paper was prepared for informational purposes by the Global Technology Applied Research center of JPMorgan Chase \& Co. This paper is not a merchandisable/sellable product of the Research Department of JPMorgan Chase \& Co. or its affiliates. Neither JPMorgan Chase \& Co. nor any of its affiliates makes any explicit or implied representation or warranty and none of them accept any liability in connection with this paper, including, without limitation, with respect to the completeness, accuracy, or reliability of the information contained herein and the potential legal, compliance, tax, or accounting effects thereof. This document is not intended as investment research or investment advice, or as a recommendation, offer, or solicitation for the purchase or sale of any security, financial instrument, financial product or service, or to be used in any way for evaluating the merits of participating in any transaction.
\clearpage

\appendix

\section{Additional proofs}
\subsection{Proof of \texorpdfstring{\Cref{prop:HSS_preservation}}{Lemma 3.5}}\label{section:appendixA}
In this section, we provide the proof \Cref{prop:HSS_preservation}. We begin with \Cref{lemma:HSS_preservation_step1} as a stepping stone. In essence, this results show that the basis computed in \Cref{alg:HSS_greedy_meta} do not need to be orthonormal, and can always be made an orthonormalization process, and that diagonal blocks can be added while preserving the telescoping factorization. Such comments have been made before, see e.g. \cite[Remark 3.3]{CasulliKressnerRobol:2024} and \cite[Section 4.2]{Martinsson:2011}.

\begin{lemma}\label{lemma:HSS_preservation_step1}
    Consider a matrix $\bm{B} \in \HSS(\ell,k)$ and any $\bm{R}, \bm{L} \in \mathbb{R}^{2^{\ell+1}k \times 2^{\ell+1}k}$ so that
    \begin{align*}
        \bm{R} &= \blockdiag(\bm{R}_1,\ldots,\bm{R}_{2^{\ell}}) \quad \text{where } \bm{R}_i \in \mathbb{R}^{2k \times 2k}, \\
        \bm{L} &= \blockdiag(\bm{L}_1,\ldots,\bm{L}_{2^{\ell}}) \quad \text{where } \bm{L}_i \in \mathbb{R}^{2k \times 2k},\\
        \bm{D} &= \blockdiag(\bm{D}_1,\ldots,\bm{D}_{2^{\ell}}) \quad \text{where } \bm{D}_i \in \mathbb{R}^{2k \times 2k}
    \end{align*}
    Then, $\bm{R}^\T \bm{B} \bm{L} + \bm{D} \in \HSS(\ell,k)$.
\end{lemma}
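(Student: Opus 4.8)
The plan is to prove \Cref{lemma:HSS_preservation_step1} by induction on $\ell$, peeling one level off the telescoping factorization \eqref{eq:telescoping_factorization} at a time. The base case $\ell=0$ is immediate, since $\HSS(0,k)$ is all of $\mathbb{R}^{2k\times 2k}$ and $\bm{R}^\T\bm{B}\bm{L}+\bm{D}$ is again a $2k\times 2k$ matrix; alternatively one may bottom out the recursion at $\ell=1$, where $\bm{B}^{(1)}=\bm{D}^{(0)}\in\mathbb{R}^{2k\times 2k}$ and the same computation as in the inductive step applies.

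For the inductive step, assume the statement for $\ell-1$ and let $\bm{B}\in\HSS(\ell,k)$. Write its outermost level as $\bm{B}=\bm{U}^{(\ell)}\bm{B}^{(\ell)}(\bm{V}^{(\ell)})^\T+\bm{D}^{(\ell)}$ with $\bm{B}^{(\ell)}\in\HSS(\ell-1,k)$ and $\bm{U}^{(\ell)},\bm{V}^{(\ell)},\bm{D}^{(\ell)}$ block diagonal as in \Cref{def:HSS}. Then
\[
  \bm{R}^\T\bm{B}\bm{L}+\bm{D}=\bigl(\bm{R}^\T\bm{U}^{(\ell)}\bigr)\,\bm{B}^{(\ell)}\,\bigl((\bm{V}^{(\ell)})^\T\bm{L}\bigr)+\bigl(\bm{R}^\T\bm{D}^{(\ell)}\bm{L}+\bm{D}\bigr).
\]
The diagonal blocks $\bm{R}_i^\T\bm{U}_i^{(\ell)}$ and $\bm{L}_i^\T\bm{V}_i^{(\ell)}$ are $2k\times k$ but need not have orthonormal columns, so I would orthonormalize via a thin QR factorization: write $\bm{R}_i^\T\bm{U}_i^{(\ell)}=\widehat{\bm{U}}_i^{(\ell)}\bm{T}_i$ and $\bm{L}_i^\T\bm{V}_i^{(\ell)}=\widehat{\bm{V}}_i^{(\ell)}\bm{S}_i$, where $\widehat{\bm{U}}_i^{(\ell)},\widehat{\bm{V}}_i^{(\ell)}\in\mathbb{R}^{2k\times k}$ have orthonormal columns and $\bm{T}_i,\bm{S}_i\in\mathbb{R}^{k\times k}$ (such a factorization exists for any matrix, regardless of rank). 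Collecting these into block-diagonal matrices $\widehat{\bm{U}}^{(\ell)},\widehat{\bm{V}}^{(\ell)}$ (which now have exactly the form demanded of level-$\ell$ HSS factors), $\bm{T}=\blockdiag(\bm{T}_1,\ldots,\bm{T}_{2^\ell})$, $\bm{S}=\blockdiag(\bm{S}_1,\ldots,\bm{S}_{2^\ell})$, and noting that $\widehat{\bm{D}}^{(\ell)}:=\bm{R}^\T\bm{D}^{(\ell)}\bm{L}+\bm{D}$ is block diagonal with $2k\times 2k$ blocks (a product and sum of matrices sharing that partition), I obtain
\[
  \bm{R}^\T\bm{B}\bm{L}+\bm{D}=\widehat{\bm{U}}^{(\ell)}\,\bigl(\bm{T}\,\bm{B}^{(\ell)}\,\bm{S}^\T\bigr)\,(\widehat{\bm{V}}^{(\ell)})^\T+\widehat{\bm{D}}^{(\ell)}.
\]

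To finish, it suffices to show $\bm{T}\bm{B}^{(\ell)}\bm{S}^\T\in\HSS(\ell-1,k)$: prepending the factors $\widehat{\bm{U}}^{(\ell)},\widehat{\bm{V}}^{(\ell)},\widehat{\bm{D}}^{(\ell)}$ to a telescoping factorization of $\bm{T}\bm{B}^{(\ell)}\bm{S}^\T$ then produces a telescoping factorization of $\bm{R}^\T\bm{B}\bm{L}+\bm{D}$ of the form in \Cref{def:HSS}. Here I would invoke the inductive hypothesis on the $2^\ell k\times 2^\ell k$ matrix $\bm{B}^{(\ell)}$: a matrix that is block diagonal with $2^\ell$ diagonal blocks of size $k\times k$ is, in particular, block diagonal with $2^{\ell-1}$ diagonal blocks of size $2k\times 2k$, so $\bm{T}^\T$ and $\bm{S}^\T$ are admissible left/right multipliers for the $(\ell-1)$ case; taking $\bm{R}'=\bm{T}^\T$, $\bm{L}'=\bm{S}^\T$, $\bm{D}'=\bm{0}$, the hypothesis gives $\bm{T}\bm{B}^{(\ell)}\bm{S}^\T=(\bm{R}')^\T\bm{B}^{(\ell)}\bm{L}'+\bm{D}'\in\HSS(\ell-1,k)$, completing the induction.

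I do not expect a genuine obstacle here; the content is essentially bookkeeping. The one step needing care is the orthonormalization: multiplying the HSS bases by $\bm{R}$ and $\bm{L}$ destroys orthonormality, so one must re-orthonormalize and push the resulting $k\times k$ coefficient matrices $\bm{T}_i,\bm{S}_i$ down one level of the hierarchy — this is precisely what forces the argument to be inductive rather than a single rewriting, and it simultaneously shows that requiring orthonormal bases in \Cref{def:HSS} costs nothing, as remarked in \cite{CasulliKressnerRobol:2024,Martinsson:2011}. One should also double-check the trivial-but-necessary fact that $\bm{R}^\T\bm{U}^{(\ell)}$, $(\bm{V}^{(\ell)})^\T\bm{L}$, and $\bm{R}^\T\bm{D}^{(\ell)}\bm{L}$ are block diagonal with the claimed block sizes, which holds because block-diagonal matrices sharing a common partition are closed under products.
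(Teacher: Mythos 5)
Your proof is correct and follows essentially the same route as the paper's: induction on $\ell$, peeling off the outermost telescoping factor, orthonormalizing $\bm{R}_i^\T\bm{U}_i^{(\ell)}$ and $\bm{L}_i^\T\bm{V}_i^{(\ell)}$ so the $k\times k$ coefficient factors are pushed down one level, regrouping them into $2k\times 2k$ diagonal blocks, and invoking the inductive hypothesis on $\bm{B}^{(\ell)}$. The only cosmetic difference is the base case ($\ell=0$, or equivalently the paper's explicit HODLR check at $\ell=1$), which is immaterial.
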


\begin{proof}
The proof will be by induction on $\ell$. If $\ell = 1$ then the factorization is equivalent a HODLR matrix
\begin{equation}\label{eq:L=1}
    \bm{B} = \begin{bmatrix} \bm{B}_{1,1} & \bm{B}_{1,2}\\ \bm{B}_{2,1} & \bm{B}_{2,2} \end{bmatrix},
\end{equation}
where $\bm{B}_{i,j} \in \mathbb{R}^{2k \times 2k}$ and $\rank(\bm{B}_{i,j}) \leq k$ if $i \neq j$. Then,
\begin{equation*}
    \bm{R}^\T \bm{B} \bm{L} + \bm{D} = \begin{bmatrix} \bm{R}_1^\T\bm{B}_{1,1} \bm{L}_1 + \bm{D}_1 & \bm{R}_1^\T\bm{B}_{1,2} \bm{L}_2\\ \bm{R}_2^\T\bm{B}_{2,1}\bm{L}_1 & \bm{R}_2^\T\bm{B}_{2,2}\bm{L}_2 + \bm{D}_2 \end{bmatrix},
\end{equation*}
which is also HODLR of the same structure as \Cref{eq:L=1}. Hence, the result holds for $\ell =1$. 

Now assume that the result holds for $\ell - 1$ levels. With this assumption, we will show that the result holds for $\ell$ levels. Since $\bm{B} \in \HSS(\ell,k)$ it has a telescoping factorization
\begin{equation*}
    \bm{B} = \bm{U}^{(\ell)} \bm{B}^{(\ell)} (\bm{V}^{(\ell)})^\T + \bm{D}^{(\ell)}
\end{equation*}
where $\bm{B}^{(\ell)} \in \HSS(\ell-1,k)$ and the remaining factors are given by 
\begin{align*}
    \bm{U}^{(\ell)} &= \blockdiag\left(\bm{U}_1^{(\ell)},\ldots,\bm{U}_{2^{\ell}}^{(\ell)}\right), \quad \bm{U}_i^{(\ell)} \in \mathbb{R}^{2k\times k} \text{ has orthonormal columns,}\\
    \bm{V}^{(\ell)} &= \blockdiag\left(\bm{V}_1^{(\ell)},\ldots,\bm{V}_{2^{\ell}}^{(\ell)}\right), \quad \bm{V}_i^{(\ell)} \in \mathbb{R}^{2k\times k} \text{ has orthonormal columns,}\\
    \bm{D}^{(\ell)} &= \blockdiag\left(\bm{D}_1^{(\ell)},\ldots,\bm{D}_{2^{\ell}}^{(\ell)}\right), \quad  \bm{D}_i^{(\ell)} \in \mathbb{R}^{2k\times 2k}.
\end{align*}
For $i = 1,\ldots,2^\ell$ factorize $\bm{R}_i^\T \bm{U}_i^{(\ell)} = \widetilde{\bm{U}}_i^{(\ell)}\widetilde{\bm{R}}_i$ and $\bm{L}_i^\T \bm{V}_i^{(\ell)} = \widetilde{\bm{V}}_i^{(\ell)}\widetilde{\bm{L}}_i$ where $\widetilde{\bm{U}}_i^{(\ell)}, \widetilde{\bm{V}} _i^{(\ell)} \in \mathbb{R}^{2k \times k}$ have orthonormal columns and $\widetilde{\bm{R}}_i, \widetilde{\bm{L}}_i \in \mathbb{R}^{k \times k}$.  Define
\begin{align*}
    \widetilde{\bm{U}}^{(\ell)} &= \blockdiag\left(\widetilde{\bm{U}}_1^{(\ell)},\ldots,\widetilde{\bm{U}}_{2^{\ell}}^{(\ell)}\right),\\
    \widetilde{\bm{V}}^{(\ell)} &= \blockdiag\left(\widetilde{\bm{V}}_1^{(\ell)},\ldots,\widetilde{\bm{V}}_{2^{\ell}}^{(\ell)}\right),\\
    \widetilde{\bm{D}}^{(\ell)} &= \blockdiag\left(\bm{R}_1^\T\bm{D}_1^{(\ell)}\bm{L}_1 + \bm{D}_1,\ldots,\bm{R}^\T_{2^{\ell}}\bm{D}_{2^{\ell}}^{({\ell})}\bm{L}_{2^{\ell}} + \bm{D}_{2^{\ell}}\right),\\
    \widetilde{\bm{R}} &= \blockdiag\left(\begin{bmatrix} \widetilde{\bm{R}}_1 & \\ & \widetilde{\bm{R}}_2 \end{bmatrix}, \ldots,\begin{bmatrix} \widetilde{\bm{R}}_{2^{\ell}-1} & \\ & \widetilde{\bm{R}}_{2^{\ell}} \end{bmatrix}\right),\\
    \widetilde{\bm{L}} &= \blockdiag\left(\begin{bmatrix} \widetilde{\bm{L}}_1 & \\ & \widetilde{\bm{L}}_2 \end{bmatrix}, \ldots,\begin{bmatrix} \widetilde{\bm{L}}_{2^{\ell}-1} & \\ & \widetilde{\bm{L}}_{2^{\ell}} \end{bmatrix}\right),
\end{align*}
where $\widetilde{\bm{R}}, \widetilde{\bm{L}} \in \mathbb{R}^{2^{\ell}k \times 2^{\ell} k}$ contain $2^{\ell-1}$ diagonal blocks of sizes $2k \times 2k$. Consequently, 
\begin{align*}
    \bm{R}^\T \bm{B} \bm{L} + \bm{D} &= \bm{R}^\T (\bm{U}^{(\ell)} \bm{B}^{(\ell)} (\bm{V}^{(\ell)})^\T + \bm{D}^{(\ell)})\bm{L}+ \bm{D}\\
    &=  \widetilde{\bm{U}}^{(\ell)} \widetilde{\bm{R}}^\T \bm{B}^{(\ell)} \widetilde{\bm{L}} (\widetilde{\bm{V}}^{(\ell)})^\T + \bm{R}^\T \bm{D}^{(\ell)}\bm{L} + \bm D\\
    &= \widetilde{\bm{U}}^{(\ell)} \widetilde{\bm{R}}^\T \bm{B}^{(\ell)} \widetilde{\bm{L}} (\widetilde{\bm{V}}^{(\ell)})^\T + \widetilde{\bm{D}}^{(\ell)}. 
\end{align*}
where $\widetilde{\bm{D}}^{(\ell)}:= \bm{R}^\T \bm{D} ^{(\ell)}\bm{L} + \bm D$ is block-diagonal. By our inductive hypothesis $\widetilde{\bm{B}}^{(\ell)} = \widetilde{\bm{R}}^\T \bm{B}^{(\ell)} \widetilde{\bm{L}}\in \HSS(\ell-1,k)$ and therefore has a telescoping factorization. Hence, 
\begin{equation*}
     \bm{R}^\T \bm{B} \bm{L} + \bm{D} = \widetilde{\bm{U}}^{(\ell)} \widetilde{\bm{B}}^{(\ell)}(\widetilde{\bm{V}}^{(\ell)})^\T + \widetilde{\bm{D}}^{(\ell)},
\end{equation*}
gives a telescoping factorization of $\bm{R}^\T \bm{B} \bm{L} + \bm{D}$, and we therefore have $\bm{R}^\T \bm{B} \bm{L} + \bm{D} \in \HSS(\ell,k)$, as required.                                   

\end{proof}

\begin{lemma}[\Cref{prop:HSS_preservation} restated]\label{lemma:HSS_preservation_appendix}
    Consider a matrix $\bm{B} \in \HSS(\ell,k)$ and any $\bm{R}, \bm{L} \in \mathbb{R}^{2^{\ell + 1}k \times 2^\ell k}$ and $\bm{D} \in \mathbb{R}^{2^{\ell + 1}k \times 2^{\ell + 1}k}$ so that
    \begin{align*}
        \bm{R} &= \blockdiag(\bm{R}_1,\ldots,\bm{R}_{2^{\ell}}), \quad \bm{R}_i \in \mathbb{R}^{2k \times k}, \\
        \bm{L} &= \blockdiag(\bm{L}_1,\ldots,\bm{L}_{2^{\ell}}), \quad \bm{L}_i \in \mathbb{R}^{2k \times k},\\
        \bm{D} &= \blockdiag(\bm{D}_1,\ldots,\bm{D}_{2^{\ell}}), \quad \bm{D}_i \in \mathbb{R}^{2k \times 2k}.
    \end{align*}
    Then, $\bm{R}^\T (\bm{B} - \bm{D}) \bm{L} \in \HSS(\ell-1,k)$.
\end{lemma}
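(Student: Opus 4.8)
The plan is to reduce the statement to \Cref{lemma:HSS_preservation_step1} by peeling off one level of the telescoping factorization of $\bm{B}$. First I would dispose of the base case $\ell = 1$ separately: there $\bm{R}^\T(\bm{B}-\bm{D})\bm{L}$ is simply an arbitrary matrix in $\R^{2k\times 2k}$, which is exactly $\HSS(0,k)$, so there is nothing to prove. Hence assume $\ell \geq 2$ and write the telescoping factorization
\[
\bm{B} = \bm{U}^{(\ell)}\,\bm{B}^{(\ell)}\,(\bm{V}^{(\ell)})^\T + \bm{D}^{(\ell)},
\]
where $\bm{B}^{(\ell)} \in \HSS(\ell-1,k)$, the matrices $\bm{U}^{(\ell)} = \blockdiag(\bm{U}_1^{(\ell)},\ldots,\bm{U}_{2^\ell}^{(\ell)})$ and $\bm{V}^{(\ell)} = \blockdiag(\bm{V}_1^{(\ell)},\ldots,\bm{V}_{2^\ell}^{(\ell)})$ have $2k\times k$ diagonal blocks with orthonormal columns, and $\bm{D}^{(\ell)} = \blockdiag(\bm{D}_1^{(\ell)},\ldots,\bm{D}_{2^\ell}^{(\ell)})$ has $2k\times 2k$ diagonal blocks.

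Next I would substitute this into $\bm{R}^\T(\bm{B}-\bm{D})\bm{L}$ and distribute; since $\bm{D}$ and $\bm{D}^{(\ell)}$ share the same $2k\times 2k$ block-diagonal partition, this gives
\[
\bm{R}^\T(\bm{B}-\bm{D})\bm{L} = (\bm{R}^\T\bm{U}^{(\ell)})\,\bm{B}^{(\ell)}\,(\bm{L}^\T\bm{V}^{(\ell)})^\T + \bm{R}^\T(\bm{D}^{(\ell)}-\bm{D})\bm{L}.
\]
Then I would check that the three new factors have the right shape: because $\bm{R}$, $\bm{L}$, $\bm{U}^{(\ell)}$, $\bm{V}^{(\ell)}$, $\bm{D}$, $\bm{D}^{(\ell)}$ are all block-diagonal relative to the same partition into $2^\ell$ diagonal blocks, each of $\bm{R}^\T\bm{U}^{(\ell)}$, $\bm{L}^\T\bm{V}^{(\ell)}$, and $\bm{R}^\T(\bm{D}^{(\ell)}-\bm{D})\bm{L}$ is block-diagonal in $\R^{2^\ell k\times 2^\ell k}$ with $2^\ell$ blocks of size $k\times k$ (explicitly $\bm{R}_i^\T\bm{U}_i^{(\ell)}$, $\bm{L}_i^\T\bm{V}_i^{(\ell)}$, and $\bm{R}_i^\T(\bm{D}_i^{(\ell)}-\bm{D}_i)\bm{L}_i$). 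Grouping consecutive pairs of these $k\times k$ blocks, each factor may instead be regarded as block-diagonal with $2^{\ell-1}$ blocks of size $2k\times 2k$ — exactly the format required by \Cref{lemma:HSS_preservation_step1} at level $\ell-1$.

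Finally I would invoke \Cref{lemma:HSS_preservation_step1} applied to $\bm{B}^{(\ell)} \in \HSS(\ell-1,k)$, with $(\bm{R}^\T\bm{U}^{(\ell)})^\T$, $(\bm{L}^\T\bm{V}^{(\ell)})^\T$, and $\bm{R}^\T(\bm{D}^{(\ell)}-\bm{D})\bm{L}$ playing the roles of $\bm{R}$, $\bm{L}$, $\bm{D}$ in that lemma, yielding $\bm{R}^\T(\bm{B}-\bm{D})\bm{L} \in \HSS(\ell-1,k)$. I expect the only mildly delicate point to be the index bookkeeping — in particular the reinterpretation of a $k\times k$-blocked block-diagonal matrix as a $2k\times 2k$-blocked one so that \Cref{lemma:HSS_preservation_step1} applies verbatim — rather than anything conceptual; the entire substance of the argument is the telescoping factorization together with that lemma.
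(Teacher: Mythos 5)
Your proposal is correct and follows essentially the same route as the paper's proof: peel off one level of the telescoping factorization, observe that $\bm{R}^\T\bm{U}^{(\ell)}$, $\bm{V}^{(\ell)\T}\bm{L}$, and $\bm{R}^\T(\bm{D}^{(\ell)}-\bm{D})\bm{L}$ are block-diagonal with $2^\ell$ blocks of size $k\times k$, regroup them as $2^{\ell-1}$ blocks of size $2k\times 2k$, and invoke \Cref{lemma:HSS_preservation_step1} on $\bm{B}^{(\ell)}\in\HSS(\ell-1,k)$. Your explicit handling of the $\ell=1$ base case is a harmless extra precaution that the paper leaves implicit.
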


\begin{proof}
    We can write $\bm{B}$ using the telescoping factorization
    \begin{equation*}
    \bm{B} = \bm{U}^{(\ell)} \bm{B}^{(\ell)} (\bm{V}^{(\ell)})^\T + \bm{D}^{(\ell)}
\end{equation*}
where $\bm{B}^{(\ell)} \in \HSS(\ell-1,k)$ and the remaining factors are given by 
\begin{align*}
    \bm{U}^{(\ell)} &= \blockdiag\left(\bm{U}_1^{(\ell)},\ldots,\bm{U}_{2^{\ell}}^{(\ell)}\right) \quad \text{where } \bm{U}_i^{(\ell)} \in \mathbb{R}^{2k\times k} \text{ has orthonormal columns,}\\
    \bm{V}^{(\ell)} &= \blockdiag\left(\bm{V}_1^{(\ell)},\ldots,\bm{V}_{2^{\ell}}^{(\ell)}\right) \quad \text{where } \bm{V}_i^{(\ell)} \in \mathbb{R}^{2k\times k} \text{ has orthonormal columns,}\\
    \bm{D}^{(\ell)} &= \blockdiag\left(\bm{D}_1^{(\ell)},\ldots,\bm{D}_{2^{\ell}}^{(\ell)}\right) \quad \text{where } \bm{D}_i^{(\ell)} \in \mathbb{R}^{2k\times 2k}.
\end{align*}
Hence,
\begin{equation*}
    \bm{R}^\T (\bm{B} - \bm{D}) \bm{L} = \bm{R}^\T \bm{U}^{(\ell)} \bm{B}^{(\ell)} (\bm{V}^{(\ell)})^\T \bm{L} + \bm{R}^\T (\bm{D}^{(\ell)} - \bm{D}) \bm{L}.
\end{equation*}
Note that $\bm{R}^\T (\bm{D}^{(\ell)} - \bm{D}) \bm{L}, \bm{R}^\T \bm{U}^{(\ell)}, \bm{L}^\T \bm{V}^{(\ell)} \in \mathbb{R}^{2^{\ell} k \times 2^{\ell} k}$ are block-diagonal with $2^{\ell}$ blocks of sizes $k \times k$. Therefore, they are also block-diagonal with $2^{\ell-1}$ blocks of sizes $2k \times 2k$. Since $\bm{B}^{(\ell)} \in \HSS(\ell-1,k)$ by \Cref{lemma:HSS_preservation_step1} we have $\bm{R}^\T (\bm{B} - \bm{D}) \bm{L} \in \HSS(\ell-1,k)$, as required. 
\end{proof}

\subsection{Proof of \texorpdfstring{\Cref{theorem:lower_bound}}{Theorem 3.8}}
\label{sec:lower_bound_proof}
We will construct an example matrix for which our algorithms output a suboptimal HSS approximation, even given full access to the entries.
This matrix is special in that we are able to find the optimal HSS approximation explicitly.
This construction shows that our algorithms are incapable of producing an approximation that is arbitrarily close to optimal.

\begin{proof}
For any $L$, let $N = 2^{L+1}$.
Construct $\vec{A} \in \R^{N \times N}$ as follows.
Divide $\vec{A}$ into blocks of size $\vec{A}_{i,j} \in \R^{2 \times 2}$ for $i,j \in \{1, \ldots, 2^L \}$, and set
\begin{equation}\label{eq:hard_instance} \vec{A}_{i,j} = \begin{cases}\begin{bmatrix}0&1+\delta \\ 1 & 0\end{bmatrix} & i+j = 2^L+1 \\ \vec{I}_{2 \times 2} & \text{o.w.}\end{cases}\end{equation}
for $\delta \in (0, 1)$. That is, the diagonal blocks are all zero, the anti-diagonal blocks are perturbed exchange matrices, and the remaining blocks are the identity.
For instance, for $L=2$, $\vec{A}$ is the following $8 \times 8$ matrix:
\[ \vec{A} = 
\left[\begin{array}{c|c|c|c}
\vec{I} & \vec{I} & \vec{I} & \begin{matrix} & 1+\delta \\ 1 &\end{matrix} \\
\hline
\vec{I} & \vec{I} & \begin{matrix} & 1+\delta \\ 1 &\end{matrix} & \vec{I} \\
\hline
\vec{I} & \begin{matrix} & 1+\delta \\ 1 &\end{matrix} & \vec{I} & \vec{I} \\
\hline
\begin{matrix} & 1+\delta \\ 1 &\end{matrix} & \vec{I} & \vec{I} & \vec{I}
\end{array}\right]
\]

We now apply \Cref{alg:HSS_greedy_meta} to this matrix with rank parameter $k=1$.
Each row of $\vec{A}$ has the same number of non-zero entries as every other row. The non-zero entries are all $1$ except that the odd-numbered rows have a single entry that is $1+\delta$. Thus, the top singular vector for each block row is $\vec{U}_i^{(L)} = \begin{bmatrix}1 & 0\end{bmatrix}^\T$.
Each column of $\vec{A}$ has the the same number of non-zero entries. They are all $1$ except that the \emph{even}-numbered columns have a single $1+\delta$. Thus, the top singular vector each block column is $\vec{V}_j^{(L)} = \begin{bmatrix}0 & 1\end{bmatrix}^\T$. Most blocks in $\vec{A}$ are not in the span of these low rank factors:
\[
\left(\vec{U}_i^{(L)}\right)^\T \vec{A}_{i,j}\vec{V}_j^{(L)}
= \begin{cases}1+\delta & i+j = 2^L+1 \\ 0 & \text{o.w.} \end{cases}
\]
Regardless of what happens in the succeeding iterations of the algorithm, the output will have the form
\[ \vec{U}^{(L)} \vec{B}^{(L)} {\vec{V}^{(L)}}^\T + \vec{D}^{(L)} \]
where
\begin{align*}
    \bm{U}^{(L)} &= \blockdiag\left(\bm{U}_1^{(L)},\ldots,\bm{U}_{2^L}^{(L)}\right) \\
    \bm{V}^{(L)} &= \blockdiag\left(\bm{V}_1^{(L)},\ldots,\bm{V}_{2^L}^{(L)}\right) \\
    \bm{D}^{(L)} &= \blockdiag\left(\bm{D}_1^{(L)},\ldots,\bm{D}_{2^L}^{(L)}\right)
\end{align*}
Furthermore, the error of approximation is lower bounded by the Frobenius weight of all the blocks that are neither on the diagonal nor on the anti-diagonal. Combining, we have:
\begin{align*}
\left\|\vec{A} - \left(\vec{U}^{(L)} \vec{B}^{(L)} {\vec{V}^{(L)}}^\T + \vec{D}^{(L)}\right)\right\|_\F^2
&\geq
\sum_{i\neq j} \left\|\vec{A}_{ij} - \vec{U}_i^{(L)}\vec{B}^{(L)}_{ij}{\vec{V}_j^{(L)}}^\T\right\|_\F^2 \\
&\geq
\sum_{\substack{i\neq j \\ i+j \neq 2^L + 1}} \left\|\vec{I}_{2 \times 2} - \vec{U}_i^{(L)}\vec{B}^{(L)}_{ij}{\vec{V}_j^{(L)}}^\T\right\|_\F^2 \\
&\geq \left(2^L \cdot 2^L - 2 \cdot 2^L\right) \left\|\vec{I}_{2 \times 2}\right\|_\F^2 \\
&= 2^{2L+1} - 2^{L+2}
\end{align*}
Above, $2^L \cdot 2^L - 2 \cdot 2^L$ counts the number of blocks that are neither in the diagonal nor in the off-diagonal.

However, there exists a far better HSS approximation to $\vec{A}$. Its telescoping factorization is given as follows. For all $\ell \in \{1, \ldots, L\}$ and all $i,j \in \{1, \ldots, 2^\ell\}$,
\begin{align*}
\vec{U}_i^{(\ell)} = \vec{V}_j^{(\ell)} = \frac1{\sqrt 2}\begin{bmatrix}1 \\ 1\end{bmatrix}
\qquad \vec{D}_i^{(\ell)} = \vec{0}_{2 \times 2}
\end{align*}
and $\vec{D}^{(0)} = 2^{L-1} \cdot \vec{I}_{2 \times 2}$.
From \Cref{def:HSS}, the corresponding HSS matrix is $\vec{B}^* = \frac12 \vec{1}_{N \times N}$, where $\vec{1}_{N \times N}$ is the $N \times N$ all ones matrix.
The error of this approximation is
\begin{align*}
\|\vec{A} - \vec{B}^*\|_\F^2
&= \sum_{i,j} \left\|\vec{A}_{ij} - \frac12 \vec{1}_{2\times 2}\right\|_\F^2 \\
&= \sum_{i+j\neq 2^L-1} \left\|\vec{I}_{2\times 2} - \frac12 \vec{1}_{2\times 2}\right\|_\F^2 + \sum_{i+j=2^L-1} \left\|\begin{bmatrix}0&1+\delta \\ 1 & 0\end{bmatrix} - \frac12 \vec{1}_{2\times 2}\right\|_\F^2 \\
&= (2^L \cdot 2^L - 2^L) + 2^L (1 + \delta + \delta^2) \\
&\leq 2^{2L} + \delta \cdot 2^{L+1}
\end{align*}
Using $\delta < 1$, the approximation returned by \Cref{alg:HSS_greedy_meta} is suboptimal by a factor of at least
\[ \frac{2^{2L+1}-2^{L+2}}{2^{2L} + \delta \cdot 2^{L+1}} \geq 2 \cdot \frac{2^{L-1} - 1}{2^{L-1} + 1} > 2 - \epsilon \]
for sufficiently large $L$ ($\approx O(\log1/\epsilon)$).
This proves \Cref{theorem:lower_bound}.
\end{proof}

\section{Other hierarchical matrix classes}
\label{sec:other_hierarchical}
Due to the rich literature on hierarchical matrices, as well as applications and motivations from diverse fields, there is not always a consensus on the definitions of and distinctions between hierarchical matrix classes. 
While we define each class that we use explicitly, it is useful to understand the conceptual relation between different matrix classes.
Inspired by \cite{AshcraftButtariMary:2021}, we provide \Cref{fig:hierarchical_defs}, which gives the notation that we use and describes three key properties that characterize the relations between key matrix classes.
\begin{itemize}
    \item \textbf{Non-recursive:}
    SSS is a class is similar to HSS, but has no recursive structure and is therefore an example of a non-hierarchical (or flat) analog of HSS. 
    \item \textbf{Partition and admissibility:} 
    Only the off-diagonal blocks of HSS and SSS matrices are low-rank, so that only diagonal blocks are recursively subdivided. However, a hierarchical matrix could have a different \textit{partition}, where  a given off-diagonal block is not low-rank and is also  recursively subdivided. This arises in the context of hyperbolic PDEs~\cite{TownsendWang:2023}, or when the hierarchical matrix is generated for a higher-dimensional problem. Moreover, as defined, the HSS matrix partition is  weakly admissible, as  the low-rank blocks may contain diagonal entries. In contrast, a strongly admissible partition   only allows off-tridiagonal blocks to be low-rank. 
    \item \textbf{Shared bases:} The blocks of HSS and SSS matrices are each low-rank and the low-rank factors are shared across HSS-rows/columns. 
    In contrast, the low-rank blocks of HODLR matrices may not share any factors. 
\end{itemize}

In this paper, we focus on the development of {algorithmic and analysis techniques}, rather than generality.
As such, for clarity, we make simple assumptions in our definitions of HSS (\Cref{def:HSS}) and SSS (\Cref{def:HSS_simple}).
Our work also extends naturally to HSS matrices with differently sized blocks at a given level, and in the case that the off-diagonal blocks have different rank-parameters, it suffices to take the maximum off-diagonal block rank as the global rank parameter.
Furthermore, in \Cref{sec:BLR} we show that our analysis of SSS matrices is easily extended to $\text{BLR}^2$ (also called uniform-BLR) matrices, which are the focus of recent work \cite{PearceYesypenkoLevittMartinsson:2025}.

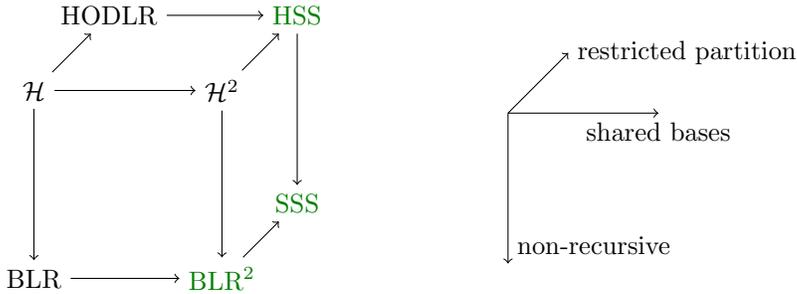
\begin{figure}
    \centering
\adjustbox{valign=c}{\begin{tikzpicture}[scale=0.50]

\node (H) at (0,5) {$\mathcal{H}$};
\node (H2) at (5,5) {$\mathcal{H}^2$};
\node (BLR) at (0,0) {$\textup{BLR}$};
\node (BLR2) at (5,0) {\hyperref[def:UBLR]{$\textup{BLR}^2$}};
\node (HODLR) at (2,7) {$\textup{HODLR}$};
\node (HSS) at (7,7) {\hyperref[def:HSS]{$\HSS$}};
\node (SSS) at (7,2) {\hyperref[def:HSS_simple]{$\SSS$}};

\draw[->] (H) -- (H2);
\draw[->] (HODLR) -- (HSS);
\draw[->] (BLR) -- (BLR2);

\draw[->] (H) -- (BLR);
\draw[->] (HSS) -- (SSS);
\draw[->] (H2) -- (BLR2);

\draw[->] (H) -- (HODLR);
\draw[->] (H2) -- (HSS);
\draw[->] (BLR2) -- (SSS);

\end{tikzpicture}}
\hspace{2cm}
\adjustbox{valign=c}{
\begin{tikzpicture}[scale=.4]
    
\draw[->] (0,5) -- (5,5) node[below] {shared bases};
\draw[->] (0,5) -- (0,0) node[above right] {non-recursive};
\draw[->] (0,5) -- (2,7) node[right] {restricted partition};

\end{tikzpicture}}
\caption{Categorization of hierarchical/rank-structured matrix classes.
Arrows indicate a matrix-class becoming more restrictive; i.e. $A\rightarrow B$ means $B\subseteq A$.
The orientation of the arrows indicates the additional property that defines the subset. 
In this paper we discuss algorithms for the $\HSS$, $\SSS$, and $\UBLR$ classes.
}
\label{fig:hierarchical_defs}
\end{figure}

\subsection{Hardness of Approximation}
Whether or not a family has the ``shared bases'' property appears to lead to the largest differences in algorithmic techniques. 
In particular, there are trivial polynomial time algorithms for computing the best HODLR, $\mathcal{H}$, and BLR matrices approximation to a given matrix.\footnote{Assuming access to a polynomial time SVD algorithm.}
In contrast, we are unaware of any (even super-polynomial time) algorithms for producing the best approximation from classes with shared bases. 
In fact, we suspect that many approximation problems involving families with shared bases are NP-hard, or hard assuming standard complexity theoretic assumptions, such as the Unique Games Conjecture.

The separation between families with and without shared bases seems to carry through to matvec query algorithms as well.
For instance, \cite{ChenDumanKelesHalikiasMuscoMuscoPersson:2025} proves that there exists an algorithm that that, given $\varepsilon > 0$, computes a HODLR approximation whose approximation error is at most $(1+\varepsilon)$ times the optimal HODLR approximation error. 
In contrast, the algorithm we describe in this paper, which we believe is the first matvec algorithm for HSS approximation with any theoretical optimality guarantees, cannot produce better than a constant-factor approximation error; see \Cref{section:lowerbound}.

\section{Bounds for uniform block low-rank approximation}
\label{sec:BLR}



In this section, we consider the class of uniform block low-rank, or $\UBLR$, matrices \cite{AshcraftButtariMary:2021,PearceYesypenkoLevittMartinsson:2025}. In particular, we will provide an error bound for \cite[Algorithm 1]{PearceYesypenkoLevittMartinsson:2025}.
Uniform block low-rank matrices matrices are slightly more general than the class $\SSS(\ell,k)$ defined in \Cref{def:HSS_simple}; a uniform BLR matrix allows the sparse remainder matrix $\bm D$ to have a different sparsity pattern than diagonal. In particular, $\bm D$ is block-diagonal in~\Cref{def:HSS_simple}, whereas, if one uses a strongly admissible partition for a uniform BLR matrix,  $\bm D$ is  block-tridiagonal.  Formally, the class of uniform block low-rank matrices are defined as follows.

\begin{definition}\label{def:UBLR}
Consider a matrix $\bm{B} \in \mathbb{R}^{bm \times bm}$.
Let $k>0$ be a rank parameter, and $S\subset \{1,\ldots, b\}\times \{1,\ldots,b\}$.
We say $\bm{B} \in \UBLR(b,m,k,S)$ if it can be written as 
\begin{equation*}
    \bm{B} = \bm{U} \bm{X} \bm{V}^\T + \bm{D},
\end{equation*}
where $\bm{X}\in\mathbb{R}^{bk\times bk}, \bm{U} \in \mathbb{R}^{bm \times bk}, \bm{V} \in \mathbb{R}^{bm \times bk},$ and $\bm{D} \in \mathbb{R}^{bm \times bm}$ have the following block-structures
\begin{align*}
    \vec{X} &= \begin{bmatrix}
        \vec{X}_{1,1} & \cdots & \vec{X}_{1,b} \\
        \vdots & \ddots &\vdots \\
        \vec{X}_{b,1} & \cdots & \vec{X}_{b,b} \\
    \end{bmatrix}
    \quad\text{where $\vec{X}_{i,j}\in\mathbb{R}^{k\times k}$,}\\
    \bm{U} &= \blockdiag\left(\bm{U}_1,\ldots,\bm{U}_{b}\right) \quad \text{where } \bm{U}_i \in \mathbb{R}^{m \times k} \text{ has orthonormal columns,}\\
     \bm{V} &= \blockdiag\left(\bm{V}_1,\ldots,\bm{V}_{b}\right) \quad \text{where } \bm{V}_i \in \mathbb{R}^{m \times k} \text{ has orthonormal columns, and}\\
     \vec{D} &= \begin{bmatrix}
        \vec{D}_{1,1} & \cdots & \vec{D}_{1,b} \\
        \vdots & \ddots &\vdots \\
        \vec{D}_{b,1} & \cdots & \vec{D}_{b,b} \\
    \end{bmatrix}
    \quad\text{where $\vec{D}_{i,j}\in\mathbb{R}^{m\times m}$ and $(i,j)\not\in S \Longrightarrow \vec{D}_{i,j} = \bm{0}$.}
\end{align*}
\end{definition}

The index subset $S$ encodes the \textit{admissibility criteria} of the partition and indicates, for example, whether or not low-rank subblocks are allowed to touch the diagonal. More specifically, $S$ consists of all the pairs of indices that correspond to inadmissible (i.e., not low-rank) blocks. 

\begin{remark}
Let $m=2k$, $b=2^\ell$, $S = \{(i,i) : i=1,\ldots, 2^\ell\}$.
    Then, from~\Cref{def:HSS_simple}, we have 
    \[
    \UBLR(b,m,k,S) = \SSS(\ell,k).
    \]
\end{remark}
As described in \cite{PearceYesypenkoLevittMartinsson:2025}, one can use the \emph{block nullification} as described in \Cref{section:blocknullification} to form random sketches of low-rank matrix blocks of $\bm{B}$ in \Cref{def:UBLR}. These sketches can then be used to compute the basis in $\bm{U}$ and $\bm{V}$ in \Cref{def:UBLR} according to the method presented in \Cref{section:pcps}. The matrix $\bm{D}$ can be computed using a similar method as described in \Cref{section:diagonal}. 

\subsection{Notation}\label{section:ublr_notation}
To facilitate our discussion we begin with introducing the necessary preliminaries and notation. For the rest of the discussion we assume that $b,m,k,$ and $S$ in \Cref{def:UBLR} are fixed. Define the following subsets of $S$:
\begin{align*}
    R_i &= \{ j \in \{1,\ldots,b\} : (i,j) \in S \}, \quad R_i' = \{ j \in \{1,\ldots,b\} : (i,j) \not\in S \}\\
    C_j &= \{ i \in \{1,\ldots,b\} : (i,j) \in S \}, \quad C_j' = \{ i \in \{1,\ldots,b\} : (i,j) \not\in S \}.
\end{align*}
$R_i$ is the set of all indices $j$ so that $\bm{D}_{i,j} \neq \bm{0}$ and $R_i'$ is the set of all indices $j$ so that $\bm{D}_{i,j} = \bm{0}$. Similarly, $C_j$ is the set of all indices $i$ so that $\bm{D}_{i,j} \neq \bm{0}$ and $C_i'$ is the set of all indices $i$ so that $\bm{D}_{i,j} = \bm{0}$. In the language of hierarchical partitions, $R_i'$ and $C_j'$ correspond to all admissible pairs of index sets for  a fixed row index set $i$ or fixed column index set $j$, respectively. 

Consider a matrix $\bm{B} \in \mathbb{R}^{bm \times bm}$ partitioned as
\begin{equation*}
    \bm{B} = \begin{bmatrix}
        \vec{B}_{1,1} & \cdots & \vec{B}_{1,b} \\
        \vdots & \ddots &\vdots \\
        \vec{B}_{b,1} & \cdots & \vec{B}_{b,b} \\
    \end{bmatrix}, \quad \bm{B}_{i,j} \in \mathbb{R}^{m \times m}.
\end{equation*}
For any fixed $i,j \in \{1,\ldots,b\}$ assume 
\begin{equation*}
    R_i' = \{(i,j_1),\ldots,(i,j_{r_i})\}, \quad C_j' = \{(i_1,j),\ldots,(i_{c_j},j)\}
\end{equation*}
and define
\begin{equation*}
    \gamma_j(\bm{B}) = \begin{bmatrix} \bm{B}_{i_1,j} \\ \vdots \\ \bm{B}_{i_{c_j},j}\end{bmatrix} \quad \text{and} \quad \rho_i(\bm{B}) = \begin{bmatrix} \bm{B}_{i,j_1} & \cdots & \bm{B}_{i,j_{r_i}}\end{bmatrix}.
\end{equation*}
Note that in the case when $\UBLR(b,m,k,S) = \SSS(\ell,k)$, $\gamma_j(\bm{B})$ and $\rho_i(\bm{B})$ correspond to the block-columns and block-rows introduced in \Cref{def:HSScolrow}. Furthermore, if $\bm{B} \in \UBLR(b,m,k,S)$, then for all $1\leq i,j\leq b$, we have that $\rank(\gamma_j(\bm{B})), \rank(\rho_i(\bm{B})) \leq k$. 

Let $s_{\max}$ be the maximum number of non-zero blocks per block-row and block-column of $\bm{D}$, i.e.
\begin{equation*}
    s_{\max} := \max\limits_{i,j \in \{1,\ldots,b\}} \left\{ |C_j|, |R_i|\right\},
\end{equation*}
where $|C_i|$ and $|R_j|$ denotes the cardinality of the sets $C_i$ and $R_j$, respectively. Hence, in the case when $\UBLR(b,m,k,S) = \SSS(\ell,k)$ we have $s_{\max} = 1$. 
    
\subsubsection{Preliminary results}
In this section, we present an analogue of \Cref{thm:simple_error} for $\UBLR(b,m,k,S)$.
\begin{theorem}\label{thm:simple_error_ublr}
    Suppose that for all $1 \leq i,j \leq b$,  $\bm{U}_i$ and $\bm{V}_i$ satisfy
    \begin{align*}
        \EE\Bigl[ \|\rho_i(\bm{A}) - \bm{U}_i \bm{U}_i^\T \rho_i(\bm{A})\|_\F^2 \Big]
        &\leq \Gamma_{\textup{r}} \cdot \|\rho_i(\bm{A}) - \llbracket \rho_i(\bm{A}) \rrbracket_k\|_\F^2 
        \\
        \EE\Bigl[ \| \gamma_j(\bm{A}) - \gamma_j(\bm{A}) \bm{V}_j \bm{V}_j^\T\|_\F^2 \Big]
        &\leq \Gamma_{\textup{c}} \cdot \|\gamma_j(\bm{A}) - \llbracket \gamma_j(\bm{A}) \rrbracket_k\|_\F^2,
    \end{align*}
    and for all $(i,j) \in S$, $\bm{D}_{i,j}$ satisfies  
    \begin{align*}
        &
        \EE\Bigl[ \| (\bm{A}_{i,j} - \bm{D}_{i,j}) - \bm{U}_i \bm{U}_i^\T (\bm{A}_{i,j} - \bm{D}_{i,j})\bm{V}_i \bm{V}_i^\T \|_\F^2 \Big| \bm{U}_i,\bm{V}_i \Big]
        \\&\hspace{5em}\leq \Gamma_{\textup{d}} \cdot \left(\|\rho_i(\bm{A}) - \bm{U}_i \bm{U}_i^\T \rho_i(\bm{A})\|_\F^2 +  \| \gamma_j(\bm{A}) - \gamma_j(\bm{A}) \bm{V}_j \bm{V}_j^\T\|_\F^2 \right).
    \end{align*}

    Let $\bm{U} = \blockdiag\left(\bm{U}_1,\ldots,\bm{U}_{b}\right)$, $\bm{V} = \blockdiag\left(\bm{V}_1,\ldots,\bm{V}_{b}\right)$,
    \begin{equation}
        \bm{D} = \begin{bmatrix}
        \vec{D}_{1,1} & \cdots & \vec{D}_{1,b} \\
        \vdots & \ddots &\vdots \\
        \vec{D}_{b,1} & \cdots & \vec{D}_{b,b} \\
    \end{bmatrix}, \quad \text{where } \bm{D}_{i,j} = \bm{0} \text{ for } (i,j) \not\in S
    \end{equation}
    and 
    $\bm{X} = \bm{U}^\T(\bm{A} - \bm{D}) \bm{V}$. 
    Then, 
    \[
    \EE\Bigl[ \| \bm{A} - (\bm{U}\bm{X}\bm{V}^\T + \bm{D}) \|_\F^2 \Bigr] \leq (\Gamma_{\textup{r}}+\Gamma_{\textup{c}})(1+\Gamma_{\textup{d}}) \cdot \rev{\min\limits_{\bm{B} \in \UBLR(b,m,k,S)}}\|\bm{A}-\bm{B}\|_\F^2.
    \]
\end{theorem}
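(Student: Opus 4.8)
The plan is to mirror the proof of \Cref{thm:simple_error} almost verbatim, replacing the role of the block-rows $r_i(\bm{A})$ and block-columns $c_j(\bm{A})$ by the admissible-block rows $\rho_i(\bm{A})$ and columns $\gamma_j(\bm{A})$, and tracking carefully where the more general sparsity pattern $S$ enters. First I would fix near-optimal factors $\widetilde{\bm{U}}, \widetilde{\bm{V}}, \widetilde{\bm{D}}, \widetilde{\bm{X}}$ achieving (up to $\varepsilon$, letting $\varepsilon\to 0$, since I do not want to assume a minimizer exists — the set of block-diagonal orthonormal factors is compact but $\bm{D}$ ranges over a noncompact affine set) the value $\mathrm{OPT}^2 := \inf_{\bm{B}\in\UBLR(b,m,k,S)}\|\bm{A}-\bm{B}\|_\F^2$. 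As in the SSS case, the inadmissible blocks can be matched exactly: for $(i,j)\in S$ one may take $\widetilde{\bm{D}}_{i,j}$ to absorb $\bm{A}_{i,j}-\widetilde{\bm{U}}_i\widetilde{\bm{X}}_{i,j}\widetilde{\bm{V}}_j^\T$, so WLOG $\mathrm{OPT}^2 = \sum_{(i,j)\notin S}\|\bm{A}_{i,j}-\widetilde{\bm{U}}_i\widetilde{\bm{X}}_{i,j}\widetilde{\bm{V}}_j^\T\|_\F^2$, and in particular $\widetilde{\bm{U}}_i \widetilde{\bm{X}}_{i,:}\widetilde{\bm{V}}^\T$ restricted to the admissible columns of row $i$ is a rank-$k$ approximation to $\rho_i(\bm{A})$.

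Next I would expand the error of the returned approximation exactly as in \Cref{eqn:simple_errdef}: split $\|\bm{A}-(\bm{U}\bm{X}\bm{V}^\T+\bm{D})\|_\F^2$ into the sum over $(i,j)\in S$ of $\|(\bm{A}_{i,j}-\bm{D}_{i,j})-\bm{U}_i\bm{U}_i^\T(\bm{A}_{i,j}-\bm{D}_{i,j})\bm{V}_i\bm{V}_i^\T\|_\F^2$ plus the sum over $(i,j)\notin S$ of $\|\bm{A}_{i,j}-\bm{U}_i\bm{U}_i^\T\bm{A}_{i,j}\bm{V}_j\bm{V}_j^\T\|_\F^2$ — here I use that $\bm{X} = \bm{U}^\T(\bm{A}-\bm{D})\bm{V}$ so $\bm{U}_i\bm{X}_{i,j}\bm{V}_j^\T = \bm{U}_i\bm{U}_i^\T(\bm{A}_{i,j}-\bm{D}_{i,j})\bm{V}_j\bm{V}_j^\T$ and $\bm{D}_{i,j}=\bm{0}$ for $(i,j)\notin S$. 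The first sum is controlled by the assumed diagonal-block bound, contributing $\Gamma_{\textup{d}}$ times a sum of $\|\rho_i(\bm{A})-\bm{U}_i\bm{U}_i^\T\rho_i(\bm{A})\|_\F^2 + \|\gamma_j(\bm{A})-\gamma_j(\bm{A})\bm{V}_j\bm{V}_j^\T\|_\F^2$ terms; here I must be a little careful, since each row index $i$ may appear in $S$ with up to $s_{\max}$ column indices, so naively this sum over $S$ could blow up by a factor $s_{\max}$. I would resolve this by noting that in the final bound I only need each $\|\rho_i(\bm{A})-\bm{U}_i\bm{U}_i^\T\rho_i(\bm{A})\|_\F^2$ term to appear with a bounded coefficient — and indeed, revisiting the hypothesis, the diagonal bound is stated per $(i,j)\in S$, so the correct reading is that the theorem as stated already implicitly forces $s_{\max}$ into the constant unless one sharpens the hypothesis; I would therefore state the conclusion with the understanding (consistent with the $s_{\max}=1$ reduction to SSS) that for the block-diagonal-$\bm{D}$ case one recovers exactly $(\Gamma_{\textup{r}}+\Gamma_{\textup{c}})(1+\Gamma_{\textup{d}})\cdot\mathrm{OPT}^2$, and otherwise an extra factor of $s_{\max}$ appears in front of $\Gamma_{\textup{d}}$.

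For the second sum, I would introduce, for each $i$, the matrix $\bm{R}_i := [\ \bm{A}_{i,j}\bm{V}_j\bm{V}_j^\T\ ]_{j:(i,j)\notin S}$ (the analogue of $\bm{R}_i$ in the SSS proof), and write the sum over admissible $j$ of $\|\bm{A}_{i,j}-\bm{U}_i\bm{U}_i^\T\bm{A}_{i,j}\bm{V}_j\bm{V}_j^\T\|_\F^2 = \|\rho_i(\bm{A})-\bm{U}_i\bm{U}_i^\T\bm{R}_i\|_\F^2$, then apply the Pythagorean theorem to split it as $\|\rho_i(\bm{A})-\bm{U}_i\bm{U}_i^\T\rho_i(\bm{A})\|_\F^2 + \|\bm{U}_i\bm{U}_i^\T(\rho_i(\bm{A})-\bm{R}_i)\|_\F^2$. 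The first piece, summed over $i$ and combined with the $\Gamma_{\textup{d}}$-terms from the diagonal blocks, is bounded using the row hypothesis and the fact that $\widetilde{\bm{U}}_i\widetilde{\bm{X}}_{i,:}\widetilde{\bm{V}}^\T$ is a rank-$k$ approximant to $\rho_i(\bm{A})$, yielding $\Gamma_{\textup{r}}(1+\Gamma_{\textup{d}})\mathrm{OPT}^2$; the second piece is bounded, using that $\bm{U}_i\bm{U}_i^\T$ is a projector, by $\sum_i\|\rho_i(\bm{A})-\bm{R}_i\|_\F^2 = \sum_{(i,j)\notin S}\|\bm{A}_{i,j}-\bm{A}_{i,j}\bm{V}_j\bm{V}_j^\T\|_\F^2 = \sum_j\|\gamma_j(\bm{A})-\gamma_j(\bm{A})\bm{V}_j\bm{V}_j^\T\|_\F^2$, which is $\le\Gamma_{\textup{c}}(1+\Gamma_{\textup{d}})\mathrm{OPT}^2$ by the column hypothesis and the same OPT-comparison argument. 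Adding the two halves gives $(\Gamma_{\textup{r}}+\Gamma_{\textup{c}})(1+\Gamma_{\textup{d}})\mathrm{OPT}^2$. The only genuine obstacle, as flagged above, is the bookkeeping of how many times each $\rho_i$- and $\gamma_j$-term is reused across the admissible partition; everything else is a routine transcription of the SSS argument with $r_i\mapsto\rho_i$, $c_j\mapsto\gamma_j$, and the diagonal sum $\sum_i$ replaced by $\sum_{(i,j)\in S}$.
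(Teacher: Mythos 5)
Your plan follows the same route as the paper's own proof: the paper's argument for \Cref{thm:simple_error_ublr} consists of exactly the error decomposition you write down (a sum over $(i,j)\in S$ of the $\bm{D}$-type errors plus a sum over $(i,j)\notin S$ of the projected-block errors), followed by the remark that the proof of \Cref{thm:simple_error} transcribes verbatim with $r_i\mapsto\rho_i$, $c_j\mapsto\gamma_j$, and the diagonal sum replaced by a sum over $S$. Your handling of the off-diagonal part (the matrices $\bm{R}_i$ assembled from the admissible blocks $\bm{A}_{i,j}\bm{V}_j\bm{V}_j^\T$, the Pythagorean split, the projector bound, and the regrouping by columns into $\sum_j\|\gamma_j(\bm{A})-\gamma_j(\bm{A})\bm{V}_j\bm{V}_j^\T\|_\F^2$) is precisely that transcription, and your comparison against a near-optimal competitor whose inadmissible blocks are absorbed into $\widetilde{\bm{D}}$ matches the paper's argument as well; your use of $\varepsilon$-approximate minimizers with $\varepsilon\to 0$ in place of exact optimizers is an equally valid variant.

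The one point where you diverge is the bookkeeping of the $\Gamma_{\textup{d}}$ terms, and your concern there is genuine. Because the hypothesis on $\bm{D}_{i,j}$ is stated per pair $(i,j)\in S$, summing it over $S$ charges each term $\|\rho_i(\bm{A})-\bm{U}_i\bm{U}_i^\T\rho_i(\bm{A})\|_\F^2$ up to $|R_i|\le s_{\max}$ times and each $\|\gamma_j(\bm{A})-\gamma_j(\bm{A})\bm{V}_j\bm{V}_j^\T\|_\F^2$ up to $|C_j|\le s_{\max}$ times, so the verbatim transcription yields the bound $(\Gamma_{\textup{r}}+\Gamma_{\textup{c}})(1+s_{\max}\Gamma_{\textup{d}})\cdot\mathrm{OPT}^2$, which coincides with the stated $(\Gamma_{\textup{r}}+\Gamma_{\textup{c}})(1+\Gamma_{\textup{d}})\cdot\mathrm{OPT}^2$ only when $s_{\max}=1$ (the $\SSS$ case). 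The paper's one-sentence proof does not address this multiplicity, so your conclusion---an extra factor of $s_{\max}$ on $\Gamma_{\textup{d}}$ for general $S$, or equivalently a diagonal hypothesis aggregated over each row and column---is the accurate statement of what the transcription actually proves; everything else in your plan is correct and aligned with the paper.
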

Noting that the error satisfies
\begin{equation*}
    \| \bm{A} - (\bm{U}\bm{X}\bm{V}^\T + \bm{D}) \|_\F^2 = \sum\limits_{(i,j) \in S} \|\bm{A}_{i,j} - \bm{U}_i \bm{X}_{i,j} \bm{V}_j^\T - \bm{D}_{i,j}\|_\F^2 + \sum\limits_{(i,j) \not\in S} \|\bm{A}_{i,j} - \bm{U}_i \bm{X}_{i,j} \bm{V}_j^\T\|_\F^2,
\end{equation*}
we can prove \Cref{thm:simple_error_ublr} using exactly the same arguments as for the proof \Cref{thm:simple_error}.

\subsubsection{Block nullification for uniform block low-rank matrices}\label{section:blocknullification_ublr}
In \Cref{section:blocknullification},  we used block nullification to compute sketches of the block-rows and block-columns. The authors of \cite{PearceYesypenkoLevittMartinsson:2025} modify this idea to compute sketches with $R_i(\bm{A})$ and $C_j(\bm{A})$ presented in \Cref{section:ublr_notation}. This section is devoted to describing this idea.

Consider a matrix $\bm{A}$ and $\bm{\Omega} \sim \Gaussian(bm, s)$ partitioned as
\begin{equation}\label{eq:ublr_omega_partition}
    \bm{\Omega} = \begin{bmatrix} \bm{\Omega}_1 \\ \bm{\Omega}_2 \\ \vdots \\ \bm{\Omega}_b \end{bmatrix}, \quad \bm{\Omega}_i \in \mathbb{R}^{m \times s}.
\end{equation}
Recalling that $R_i' = \{(i,j_1),\ldots,(i,j_{r_i})\}$ define
\begin{equation*}
    \bm{\Omega}_{R_i'} = \begin{bmatrix} \bm{\Omega}_{j_1} \\ \vdots \\ \bm{\Omega}_{r_i} \end{bmatrix}
\end{equation*}
and define $\bm{\Omega}_{R_i}$ to be the matrix containing the concatenated blocks of $\bm{\Omega}$ not contained in $\bm{\Omega}_{R_i'}$. Note that $\bm{\Omega}_{R_i}$ has at most $s_{\max} m$ rows and therefore has a nullspace of dimension at least $s - s_{\max}m$. Therefore, we can obtain a matrix $\bm{P}_{R_i}$ whose columns form an orthonormal basis for the nullspace so that $\bm{\Omega}_{R_i}\bm{P}_{R_i} = 0$. Moreover, by the unitary invariance of Gaussian matrices together with the independence of the different blocks of $\bm{\Omega}$,  $\bm{G}_{R_i} :=\bm{\Omega}_{R_i'} \bm{P}_{R_i} \sim \Gaussian(r_i m,s-s_{\max}m)$. Furthermore, note that if $\bm{Y} = \bm{A} \bm{\Omega}$, then
\begin{equation*}
    \bm{Y}_i \bm{P}_{R_i} = \rho_i(\bm{A}) \bm{G}_{R_i}.
\end{equation*}
Thus, we can implicitly compute Gaussian sketches of $\rho_i(\bm{A})$ from the sketch $\bm{A} \bm{\Omega}$. By an identical argument, we can implicitly compute Gaussian sketches of $\gamma_i(\bm{A})$ from the sketch $\bm{Z} = \bm{A}^\T \bm{\Psi}$, where $\bm{\Psi} \sim \Gaussian(bm, s)$, by computing $\bm{Z}_j \bm{Q}_{C_j}$ where $\bm{Q}_{C_j}$ is an orthonormal basis for the nullspace of $\bm{\Psi}_{C_j}$.

By the discussion above together with \Cref{thm:range_finder} we  get the following result. 

\begin{theorem}\label{thm:range_finder_ublr}
    Let $\bm{A}\in \mathbb{R}^{bm\times bm}$ and suppose $\bm{\Omega}, \bm{\Psi}\sim\operatorname{Gaussian}(bm,s)$ where $s \geq s_{\max} m + k + 2$. 
    Let $\bm{U}_i$ be the top $k$ left singular vectors of $\bm{A}\bm{\Omega} \bm{P}_{R_i}$ and $\bm{V}_j$ be the top $k$ left singular vectors of $\bm{A}^\T \bm{\Psi} \bm{Q}_{C_j}$. Then,
    \[
    \EE\Bigl[ \| \rho_i(\bm{A}) - \bm{U}_i\bm{U}_i^\T \rho_i(\bm{A}) \|_\F^2 \Big]
    \leq \left(1 + \frac{2e(s-s_{\max}m)}{\sqrt{(s-s_{\max}m-k)^2-1}}\right)^2 \| \rho_i(\bm{A}) - \llbracket \rho_i(\bm{A}) \rrbracket_k \|_\F^2.
    \]
    and
    \[
    \EE\Bigl[ \| \gamma_j(\bm{A}) - \gamma_i(\bm{A}) \bm{V}_i\bm{V}_i^\T\|_\F^2 \Big]
    \leq \left(1 + \frac{2e(s-s_{\max}m)}{\sqrt{(s-s_{\max}m-k)^2-1}}\right)^2 \| \gamma_j(\bm{A}) - \llbracket \gamma_j(\bm{A}) \rrbracket_k \|_\F^2.
    \]
\end{theorem}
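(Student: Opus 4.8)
The plan is to obtain this as a direct corollary of the explicit PCPS bound in \Cref{thm:range_finder}, using the block nullification identity from \Cref{section:blocknullification_ublr}. First I would fix a row index $i$ and recall that $\bm{P}_{R_i}$ is an orthonormal basis for the nullspace of $\bm{\Omega}_{R_i}$, so that all blocks of $\bm{\Omega}$ corresponding to inadmissible pairs $(i,j)$ are annihilated and the sketch reduces to $\bm{A}\bm{\Omega}\bm{P}_{R_i} = \rho_i(\bm{A})\,\bm{G}_{R_i}$ with $\bm{G}_{R_i} = \bm{\Omega}_{R_i'}\bm{P}_{R_i}$. The key structural point --- which follows from the unitary invariance of Gaussian matrices together with the mutual independence of the blocks of $\bm{\Omega}$, exactly as in the derivation of \cref{eqn:block_nullification_HSS_row} --- is that $\bm{G}_{R_i}$ is itself an honest (unconditioned) Gaussian matrix of size $r_i m \times (s - s_{\max} m)$. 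Here I would be careful to note that $\bm{\Omega}_{R_i}$ has at most $s_{\max} m$ rows (since $|R_i| \le s_{\max}$), so its nullspace has dimension at least $s - s_{\max} m$; after possibly discarding extra columns of $\bm{P}_{R_i}$ we may take $\bm{G}_{R_i}$ to have exactly $s - s_{\max} m$ columns, and in any case the bound is monotone decreasing in the sketch width so keeping more columns only helps.

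Next I would apply \Cref{thm:range_finder} with $\bm{B} := \rho_i(\bm{A}) \in \mathbb{R}^{m \times r_i m}$, sketching matrix $\bm{G}_{R_i}$, and $q := s - s_{\max} m$. The hypothesis $s \ge s_{\max} m + k + 2$ is precisely what makes $q \ge k + 2$, so \Cref{thm:range_finder} applies and gives
\[
\EE\bigl[\|\rho_i(\bm{A}) - \bm{U}_i \bm{U}_i^\T \rho_i(\bm{A})\|_\F^2\bigr] \le \left(1 + \frac{2eq}{\sqrt{(q-k)^2 - 1}}\right)^2 \|\rho_i(\bm{A}) - \llbracket\rho_i(\bm{A})\rrbracket_k\|_\F^2,
\]
and substituting $q = s - s_{\max} m$ yields the first claimed inequality. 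For the block-column bound I would repeat the argument with $\bm{A}^\T$ in place of $\bm{A}$: block nullification gives $\bm{A}^\T \bm{\Psi} \bm{Q}_{C_j} = \gamma_j(\bm{A})^\T \bm{H}_{C_j}$ with $\bm{H}_{C_j} \sim \Gaussian(c_j m,\, s - s_{\max} m)$, and applying \Cref{thm:range_finder} to $\bm{B} := \gamma_j(\bm{A})^\T$ bounds $\EE\|\gamma_j(\bm{A})^\T - \bm{V}_j\bm{V}_j^\T \gamma_j(\bm{A})^\T\|_\F^2$. Transposing inside the Frobenius norm, and using that $\gamma_j(\bm{A})$ and $\gamma_j(\bm{A})^\T$ have the same singular values (hence the same rank-$k$ truncation error), converts this into the stated bound on $\EE\|\gamma_j(\bm{A}) - \gamma_j(\bm{A})\bm{V}_j\bm{V}_j^\T\|_\F^2$.

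Since the analytically delicate part --- the explicit constant in the range-finder bound --- is already established in \Cref{thm:range_finder}, there is no genuine obstacle here. The one place to take care is the dimension bookkeeping: verifying that the correct quantity to subtract from the sketch width is $s_{\max} m$ (the maximum number of inadmissible blocks per row or column, times $m$) rather than the row-specific count $|R_i| m$, and confirming that the nullification step really does leave $\bm{G}_{R_i}$ (resp. $\bm{H}_{C_j}$) Gaussian and independent of the deterministic matrix $\rho_i(\bm{A})$ (resp. $\gamma_j(\bm{A})$), which is exactly what licenses the invocation of \Cref{thm:range_finder}.
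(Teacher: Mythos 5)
Your proposal is correct and follows essentially the same route as the paper: the paper proves this theorem by combining the block nullification identity $\bm{Y}_i\bm{P}_{R_i} = \rho_i(\bm{A})\bm{G}_{R_i}$ with $\bm{G}_{R_i}\sim\Gaussian(r_i m, s-s_{\max}m)$ and then invoking \Cref{thm:range_finder} with $q = s - s_{\max}m$ (and symmetrically with $\bm{A}^\T$ for the block columns), exactly as you do. Your added remarks on trimming $\bm{P}_{R_i}$ to width $s-s_{\max}m$ and on the independence that licenses the range-finder bound are consistent with the paper's discussion in the block-nullification subsection.
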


\subsubsection{Computing the block-sparse matrix}\label{section:diagonal_ublr}
The previous section provided a method to compute near-optimal bases $\bm{U}_1,\ldots,\bm{U}_b,\bm{V}_1,\ldots,\bm{V}_b$. In this section we show how we can compute a matrix $\bm{D}$ satisfying the condition in \Cref{thm:simple_error_ublr}. 

\begin{theorem}\label{theorem:diagonal_ublr}
    Consider a matrix $\bm{B} \in \mathbb{R}^{bm \times bm}$ as partitioned in \Cref{def:UBLR}. Fix some bases
    \begin{align*}
    \bm{U}_1,\ldots,\bm{U}_{b} \quad &\text{where } \bm{U}_i \in \mathbb{R}^{m \times k} \text{ has orthonormal columns,}\\
     \bm{V}_1,\ldots,\bm{V}_{b} \quad &\text{where } \bm{V}_i \in \mathbb{R}^{m \times k} \text{ has orthonormal columns}.
\end{align*}
Let $\bm{\Omega},\bm{\Psi} \sim \operatorname{Gaussian}(m,s)$, where $s \geq s_{\max}m + k + 2$, be independent and partition
\begin{equation}\label{eq:UBLR_Ypartition}
    \bm{\Omega} = \begin{bmatrix} \bm{\Omega}_1 \\ \bm{\Omega}_2 \\ \vdots \\ \bm{\Omega}_{b} \end{bmatrix}, \quad\bm{\Psi} = \begin{bmatrix} \bm{\Psi}_1 \\ \bm{\Psi}_2 \\ \vdots \\ \bm{\Psi}_{b} \end{bmatrix}, \quad  \bm{Y} = \bm{B} \bm{\Omega} = \begin{bmatrix} \bm{Y}_1 \\ \bm{Y}_2 \\ \vdots \\ \bm{Y}_{b} \end{bmatrix}, \quad \bm{Z} = \bm{B}^\T \bm{\Psi} = \begin{bmatrix} \bm{Z}_1 \\ \bm{Z}_2 \\ \vdots \\ \bm{Z}_{b} \end{bmatrix},
\end{equation}
where $\bm{\Omega}_i, \bm{\Psi}_i,\bm{Y}_i, \bm{Z}_i \in \mathbb{R}^{m \times s}$. Furthermore, let $\bm{\Omega}_{R_i}$ and $\bm{\Psi}_{C_j}$ be defined as in \Cref{section:blocknullification_ublr}. For $(i,j) \in S$ define
\begin{equation*}
    \bm{D}_{i,j} = (\bm{I}-\bm{U}_i \bm{U}_i^\T) \bm{Y}_i \bm{\Omega}_{R_i}^{\dagger} + \bm{U}_i \bm{U}_i((\bm{I} - \bm{V}_j \bm{V}_j^\T) \bm{Z}_{j} \bm{\Psi}_{C_j}^{\dagger})^\T.
\end{equation*}
Then, for each $(i,j) \in S$ we have 
\begin{align*}
    &\mathbb{E}\Bigl[\|\bm{B}_{i,j} - \bm{D}_i - \bm{U}_i \bm{U}_i^\T(\bm{B}_{ii} - \bm{D}_i)\bm{V}_i \bm{V}_i^\T\|_\F^2 \Bigr] 
    \\&\hspace{5em}\leq \frac{s_{\max}m}{s-s_{\max}m - 1} \left(\|(\bm{I} - \bm{U}_i \bm{U}_i^\T) \rho_i(\bm{B})\|_\F^2 + \| \gamma_i(\bm{B})(\bm{I} - \bm{V}_i \bm{V}_i^\T)\|_\F^2 \right)
\end{align*}
\end{theorem}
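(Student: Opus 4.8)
The plan is to mirror the proof of \Cref{theorem:diagonal} almost verbatim, with the block-row $r_i$ replaced by $\rho_i$, the block-column $c_i$ replaced by $\gamma_j$, and the ``all-but-$i$'' Gaussian blocks $\bm{G}_i,\bm{H}_i$ replaced by the block-nullified Gaussians from \Cref{section:blocknullification_ublr}. First I would set up the analogues of $\bm G_i$ and $\bm H_i$: let $\bm\Omega_{R_i}$ and $\bm\Psi_{C_j}$ be the concatenations of the non-admissible blocks (as in \Cref{section:blocknullification_ublr}), so that by construction of block nullification $\bm{Y}_i = \bm{B}_{i,i}\bm{\Omega}_i + \rho_i(\bm B)\,\bm{\Omega}_{R_i}$ where here $\bm{\Omega}_{R_i}$ plays the role that the full off-block-row Gaussian played before (I would be slightly careful about which blocks are admissible, but the key point is $(i,j)\in S$ so the $(i,j)$ diagonal-type block is excluded from $\rho_i$). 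Using $\bm{\Omega}_i\bm{\Omega}_i^\dagger = \bm I$ and $\bm{\Omega}_{R_i}$ independent of $\bm{\Omega}_i$ — this independence is exactly the unitary-invariance argument already made in \Cref{section:blocknullification_ublr} — I get
\[
(\bm I - \bm U_i\bm U_i^\T)\bm Y_i\bm\Omega_{R_i}^\dagger = (\bm I-\bm U_i\bm U_i^\T)\bm B_{i,i} + (\bm I-\bm U_i\bm U_i^\T)\rho_i(\bm B)\bm G_{R_i}\bm\Omega_{R_i}^\dagger
\]
wait — more precisely one substitutes $\bm Y_i\bm\Omega_{R_i}^\dagger$ and the cross term $\bm\Omega_i\bm\Omega_{R_i}^\dagger$ does not vanish, so one instead pseudoinverts against $\bm\Omega_i$; I will follow \Cref{theorem:diagonal}'s exact bookkeeping, pseudo-inverting $\bm\Omega_i$ (not $\bm\Omega_{R_i}$), which is what makes the $\bm B_{i,i}$ term survive cleanly and the $\rho_i$ term pick up a factor $\bm\Omega_{R_i}\bm\Omega_i^\dagger$ with $\bm\Omega_{R_i}$ independent of $\bm\Omega_i$.

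Then, exactly as in \Cref{theorem:diagonal}, expanding $\bm D_{i,j}$ from its definition yields
\[
\bm D_{i,j} = \bm B_{i,j} - \bm U_i\bm U_i^\T\bm B_{i,j}\bm V_j\bm V_j^\T + (\bm I-\bm U_i\bm U_i^\T)\rho_i(\bm B)\bm G_{R_i}\bm\Omega_i^\dagger + \bm U_i\bm U_i^\T(\bm\Psi_j^\dagger)^\T\bm H_{C_j}^\T\gamma_j(\bm B)(\bm I-\bm V_j\bm V_j^\T),
\]
wait, again I should keep $\bm B_{i,i}$ vs $\bm B_{i,j}$ straight: since $(i,j)\in S$, the relevant sketch-block identity is the one relating $\bm Y_i$ to $\bm B_{i,j}$ and $\rho_i(\bm B)$ — but $\bm Y_i = \sum_j \bm B_{i,j}\bm\Omega_j$, and $\rho_i(\bm B)$ is the concatenation over the admissible $j$'s, so $\bm B_{i,j}$ for this particular $(i,j)\in S$ is NOT part of $\rho_i(\bm B)$; hence $\bm Y_i = \big(\text{inadmissible blocks}\big)\bm\Omega_{R_i} + \rho_i(\bm B)\bm G$ where the inadmissible part includes $\bm B_{i,j}$. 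This is the right structure, matching \Cref{theorem:diagonal} where the single diagonal block played this role. So the surviving deterministic term is $\bm B_{i,j} - \bm U_i\bm U_i^\T\bm B_{i,j}\bm V_j\bm V_j^\T$ provided we form $\bm D_{i,j}$ by pseudo-inverting $\bm\Omega_{R_i}$'s $\bm B_{i,j}$-column — this is the one subtlety where the definition $\bm D_{i,j} = (\bm I-\bm U_i\bm U_i^\T)\bm Y_i\bm\Omega_{R_i}^\dagger + \cdots$ in \Cref{theorem:diagonal_ublr} must be interpreted so that $\bm\Omega_{R_i}$ contains the columns against which $\bm B_{i,j}$ is isolated. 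I would double-check this indexing carefully, but it is routine.

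Having established the decomposition of $\bm B_{i,j}-\bm D_{i,j}$, the rest is identical to \Cref{theorem:diagonal}: note $\bm U_i\bm U_i^\T\bm D_{i,j}\bm V_j\bm V_j^\T = \bm 0$, apply the Pythagorean theorem to split $\|\bm B_{i,j}-\bm D_{i,j}-\bm U_i\bm U_i^\T(\bm B_{i,j}-\bm D_{i,j})\bm V_j\bm V_j^\T\|_\F^2$ into the two orthogonal pieces
\[
\|(\bm I-\bm U_i\bm U_i^\T)\rho_i(\bm B)\bm G_{R_i}\bm\Omega_i^\dagger\|_\F^2 + \|\bm U_i\bm U_i^\T(\bm\Psi_j^\dagger)^\T\bm H_{C_j}^\T\gamma_j(\bm B)(\bm I-\bm V_j\bm V_j^\T)\|_\F^2,
\]
drop the leading projectors to upper bound, take expectations using independence of $\bm G_{R_i}$ from $\bm\Omega_i$ (resp.\ $\bm H_{C_j}$ from $\bm\Psi_j$), and invoke \cite[Proposition 10.1-10.2]{HalkoMartinssonTropp:2011}: $\EE\|\bm\Omega_i^\dagger\|$-type bounds give a factor $\tfrac{s_{\max}m}{s-s_{\max}m-1}$ once one observes that $\bm G_{R_i}$ has $s - s_{\max}m$ columns and $\bm\Omega_i^\dagger$ is the pseudoinverse of an $m\times s$ Gaussian, while the $\EE\|(\bm I-\bm U_i\bm U_i^\T)\rho_i(\bm B)\bm G_{R_i}\|_F^2$ term contributes $(s-s_{\max}m)\|(\bm I-\bm U_i\bm U_i^\T)\rho_i(\bm B)\|_F^2$ — the dimensions cancel to leave the stated constant. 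The main obstacle, and really the only place requiring genuine care rather than transcription, is getting the block-nullification indexing right: confirming which blocks land in $\bm\Omega_{R_i}$ versus $\bm G_{R_i}$, that the relevant $(i,j)\in S$ block is the one isolated by the pseudoinverse, and that the Gaussian dimensions ($r_im$ rows shrinking to the $s - s_{\max}m$ effective columns) feed the Halko–Martinsson–Tropp bounds with the constant $s_{\max}m$ rather than $2k$. Once that dictionary is pinned down, the proof is word-for-word that of \Cref{theorem:diagonal}.
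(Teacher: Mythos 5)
Your high-level plan is the paper's plan: the paper gives no separate argument and simply states that the proof is identical to that of \Cref{theorem:diagonal} under the dictionary $r_i\to\rho_i$, $c_j\to\gamma_j$. However, your execution departs from that dictionary at exactly the point that matters, and as written it would not prove the stated bound. The correct translation replaces the single diagonal block $\bm{B}_{i,i}$ by the whole inadmissible strip $\{\bm{B}_{i,j'}:j'\in R_i\}$ and, correspondingly, replaces the pseudo-inverted Gaussian $\bm{\Omega}_i$ by the concatenated block $\bm{\Omega}_{R_i}$ (of size at most $s_{\max}m\times s$), exactly as in the theorem's definition of $\bm{D}_{i,j}$. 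Writing $\bm{Y}_i=\bm{B}_{i,R_i}\bm{\Omega}_{R_i}+\rho_i(\bm{B})\bm{\Omega}_{R_i'}$ and using $\bm{\Omega}_{R_i}\bm{\Omega}_{R_i}^\dagger=\bm{I}$ (valid since $s\geq s_{\max}m+k+2$), one gets $\bm{Y}_i\bm{\Omega}_{R_i}^\dagger=\bm{B}_{i,R_i}+\rho_i(\bm{B})\bm{\Omega}_{R_i'}\bm{\Omega}_{R_i}^\dagger$, which isolates every inadmissible block in row $i$ at once, including $\bm{B}_{i,j}$ for the given $(i,j)\in S$. Your mid-proof decision to instead ``follow \Cref{theorem:diagonal}'s exact bookkeeping, pseudo-inverting $\bm{\Omega}_i$ (not $\bm{\Omega}_{R_i}$)'' breaks this: when $j\neq i$ it does not recover $\bm{B}_{i,j}$ as the deterministic term, and the other inadmissible blocks $\bm{B}_{i,j'}$, $j'\in R_i\setminus\{i\}$, contaminate the error through terms like $\bm{B}_{i,j'}\bm{\Omega}_{j'}\bm{\Omega}_i^\dagger$ which are neither annihilated nor bounded by $\|(\bm{I}-\bm{U}_i\bm{U}_i^\T)\rho_i(\bm{B})\|_\F$, since those blocks are not part of $\rho_i(\bm{B})$.

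The second concrete error is the constant. The Gaussian multiplying $\rho_i(\bm{B})$ in the error term is the raw admissible-block matrix $\bm{\Omega}_{R_i'}$, not the nullspace-projected $\bm{G}_{R_i}$ with $s-s_{\max}m$ columns; block nullification (and hence the $s-s_{\max}m$ count) enters only the range-finder step of \Cref{thm:range_finder_ublr}, not the recovery of $\bm{D}$. Conditioning on $\bm{\Omega}_{R_i}$ and applying \cite[Proposition 10.1]{HalkoMartinssonTropp:2011} gives $\mathbb{E}\bigl[\|(\bm{I}-\bm{U}_i\bm{U}_i^\T)\rho_i(\bm{B})\bm{\Omega}_{R_i'}\bm{\Omega}_{R_i}^\dagger\|_\F^2\,\big|\,\bm{\Omega}_{R_i}\bigr]=\|(\bm{I}-\bm{U}_i\bm{U}_i^\T)\rho_i(\bm{B})\|_\F^2\,\|\bm{\Omega}_{R_i}^\dagger\|_\F^2$ with no extra dimensional factor (the factor of $q$ you invoke belongs to the spectral-norm argument in \Cref{thm:range_finder}, a different bound), and then \cite[Proposition 10.2]{HalkoMartinssonTropp:2011} gives $\mathbb{E}\|\bm{\Omega}_{R_i}^\dagger\|_\F^2=\frac{|R_i|m}{s-|R_i|m-1}\leq\frac{s_{\max}m}{s-s_{\max}m-1}$. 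Your proposed ``dimension cancellation'' of $(s-s_{\max}m)$ against the pseudoinverse of an $m\times s$ Gaussian does not produce this constant. Once these two points are fixed --- pseudo-invert $\bm{\Omega}_{R_i}$ (and $\bm{\Psi}_{C_j}$ on the column side), and take the constant purely from $\mathbb{E}\|\bm{\Omega}_{R_i}^\dagger\|_\F^2$ --- the rest of your outline (the $\bm{U}_i\bm{U}_i^\T\bm{D}_{i,j}\bm{V}_j\bm{V}_j^\T=\bm{0}$ observation, the Pythagorean split, dropping projectors, independence of disjoint Gaussian blocks) is indeed a word-for-word transcription of \Cref{theorem:diagonal}, which is precisely the paper's intended proof.
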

The proof is identical to \Cref{theorem:diagonal}, which is a special case of \Cref{theorem:diagonal_ublr} when $S = \{(i,i):i = 1,\ldots,b\}$.

\subsubsection{The algorithm}
Using the discussion from \Cref{section:blocknullification_ublr} and \Cref{section:diagonal_ublr} we can now present an algorithm to compute quasi-optimal approximation $\bm{B} \in \UBLR(b,m,k,S)$ to $\bm{A}$ using only matrix-vector products with $\bm{A}$ and $\bm{A}^\T$; see \Cref{alg:UBLR_greedy_matvec}.
\begin{algorithm}
\caption{Greedy $\UBLR$ approximation from matvecs}
\label{alg:UBLR_greedy_matvec}
\textbf{input:} A matrix $\bm{A} \in \mathbb{R}^{N \times N}$ implicitly given through matrix-vector products. A number of columns of the random matrices $s \geq s_{max} m + k + 2$.\\
\textbf{output:} An approximation $\bm{B} \in \UBLR(b,m,k,S)$ to $\bm{A}$.
\begin{algorithmic}[1]
    \State Sample $\bm \Omega, \widetilde{\bm \Omega}, \bm  \Psi, \widetilde{\bm{\Psi}} \sim \operatorname{Gaussian}(bm, s)$ partitioned as per \Cref{eq:ublr_omega_partition}.
    \State Compute the following  partitioned  as per \Cref{eq:UBLR_Ypartition}: \begin{align*}& \bm Y= \bm{A}\bm \Omega \qquad  &\widetilde{\bm{Y}} = \bm{A} \widetilde{\bm{\Omega}} \\  &\bm Z =  \bm{A}^{\T} \bm \Psi \qquad &\widetilde{\bm{Z}}=\bm{A}^{\T}  \widetilde{\bm{\Psi}}\end{align*}

    \For{row blocks $i = 1, \dots, b$ and column blocks $j = 1, \dots, b$ }
    \State Let $\bm{P}_{R_i}$ be an orthonormal basis of $\text{nullspace}(\bm{\Omega}_{R_i})$.
    \State Let $\bm U_i \in \mathbb{R}^{m \times k}$ be the top $k$ left singular vectors of $\bm{Y}_{R_j} \bm{P}_{C_j}$.
    \State Let $\bm{Q}_{C_j}$ be an orthonormal basis of $\text{nullspace}(\bm{\Psi}_{C_j})$.
    \State Let $\bm{V}_j\in \mathbb{R}^{m \times k}$ be the top $k$ left singular vectors of $\bm{Z}_{C_j} \bm{Q}_{R_i}$.
    
   \EndFor
   \State $\bm U = \text{blockdiag} (\bm{U}_1,\ldots,\bm{U}_{b})$
   \State $\bm V = \text{blockdiag} (\bm{V}_1,\ldots,\bm{V}_{b})$
   \For{$(i, j) \in S$} 
   \State $\bm D_{i,j} = (\bm I - \bm U_i (\bm U_i)^\T )\widetilde{\bm{Y}}_i (\widetilde{\bm{\Omega}}_{R_i}^\dagger + \bm U_i(\bm U_i)^\T [(\bm I - \bm V_j (\bm V_j)^\T) \widetilde{\bm{Z}}_j(\widetilde{\bm{\Psi}} _{C_j})^\dagger]^\T $
   \EndFor
   \State $\bm X = (\bm{U})^\T \left(\bm{A} - \bm{D}\right)\bm{V}$ 
\end{algorithmic}
\end{algorithm}

From \Cref{thm:simple_error_ublr}, \Cref{thm:range_finder_ublr}, and \Cref{theorem:diagonal_ublr} we immediately get the following result for the output of \Cref{alg:UBLR_greedy_matvec}.

\begin{theorem}\label{theorem:ublr_matvec}
    If $s \geq s_{\max}m + k + 2$, \Cref{alg:UBLR_greedy_matvec} returns an approximation $\bm{B} \in \UBLR(b,m,k,S)$ to $\bm{A}$ so that \Cref{thm:simple_error_ublr} holds with
    \begin{equation*}
        \Gamma_{\textup{r}} = \Gamma_{\textup{c}} = \left(1 + \frac{2e(s-s_{\max}m)}{\sqrt{(s-s_{\max}m-k)^2-1}}\right)^2,
        \qquad
        \Gamma_{\textup{d}} = \frac{s_{\max}m}{s-s_{\max}m -1}.
    \end{equation*}
    Hence, with $O(s_{\max}m + k)$ matrix-vector products with $\bm{A}$ and $\bm{A}^\T$ \Cref{alg:UBLR_greedy_matvec} returns an approximation to $\bm{A}$ so that
    \begin{equation*}
        \mathbb{E}\Bigl[ \|\bm{A} - \bm{B}\|_\F^2 \Bigr] \leq \Gamma\cdot \rev{\min\limits_{\bm{C} \in \UBLR(b,m,k,S)}}\|\bm{A}-\bm{C}\|_\F^2
        , \quad \text{where} \quad \Gamma = O(1).
    \end{equation*}
\end{theorem}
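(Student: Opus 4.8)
\textbf{Proof plan for \Cref{theorem:ublr_matvec}.}
The plan is to simply combine the three ingredients that have already been assembled in \Cref{section:ublr_notation}--\Cref{section:diagonal_ublr}, in exactly the same way the proof of \Cref{theorem:HSS_matvec} combines \Cref{thm:range_finder,theorem:diagonal} with \Cref{theorem:HSSgreedy}. The one genuine difference is that there is no recursion to unwind here: $\UBLR(b,m,k,S)$ is a ``flat'' class, so the relevant structural result is \Cref{thm:simple_error_ublr} rather than \Cref{theorem:HSSgreedy}, and no induction on levels is needed. This should make the argument strictly shorter than the HSS case.

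First I would record the error bounds for the bases produced by \Cref{alg:UBLR_greedy_matvec}. By the block-nullification discussion of \Cref{section:blocknullification_ublr}, the matrix $\bm{Y}_i\bm{P}_{R_i}$ is a Gaussian sketch of $\rho_i(\bm{A})$ with $s-s_{\max}m$ columns, and likewise $\bm{Z}_j\bm{Q}_{C_j}$ is a Gaussian sketch of $\gamma_j(\bm{A})$; applying \Cref{thm:range_finder_ublr} (which is itself just \Cref{thm:range_finder} with $q = s - s_{\max}m$) gives
\[
\EE\bigl[\|\rho_i(\bm{A}) - \bm{U}_i\bm{U}_i^\T\rho_i(\bm{A})\|_\F^2\bigr] \le \Gamma_{\textup{r}}\cdot\|\rho_i(\bm{A}) - \llbracket\rho_i(\bm{A})\rrbracket_k\|_\F^2,
\]
and the analogous bound for $\gamma_j$, with $\Gamma_{\textup{r}} = \Gamma_{\textup{c}} = \bigl(1 + 2e(s-s_{\max}m)/\sqrt{(s-s_{\max}m-k)^2-1}\bigr)^2$. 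The hypothesis $s \ge s_{\max}m + k + 2$ is exactly what is needed for $q \ge k+2$ in \Cref{thm:range_finder}. Next, I would note that the blocks $\bm{D}_{i,j}$ in \Cref{alg:UBLR_greedy_matvec} are formed from the \emph{independent} fresh sketches $\widetilde{\bm\Omega},\widetilde{\bm\Psi}$, so they are independent of $\bm{U}_i,\bm{V}_j$; \Cref{theorem:diagonal_ublr} then yields the conditional bound required by the third hypothesis of \Cref{thm:simple_error_ublr} with $\Gamma_{\textup{d}} = s_{\max}m/(s-s_{\max}m-1)$.

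With all three hypotheses of \Cref{thm:simple_error_ublr} verified, I would invoke it directly to conclude
\[
\EE\bigl[\|\bm{A} - (\bm{U}\bm{X}\bm{V}^\T + \bm{D})\|_\F^2\bigr] \le (\Gamma_{\textup{r}}+\Gamma_{\textup{c}})(1+\Gamma_{\textup{d}})\cdot\inf_{\bm{C}\in\UBLR(b,m,k,S)}\|\bm{A}-\bm{C}\|_\F^2,
\]
which is the first displayed claim. For the second displayed claim, observe that when $s = O(s_{\max}m + k)$ is chosen a constant factor above the threshold (say $s = 2(s_{\max}m+k)+2$), each of $\Gamma_{\textup{r}},\Gamma_{\textup{c}},\Gamma_{\textup{d}}$ is bounded by an absolute constant, so the product $(\Gamma_{\textup{r}}+\Gamma_{\textup{c}})(1+\Gamma_{\textup{d}}) = O(1)$; and the matvec count is $4s = O(s_{\max}m+k)$ since the algorithm draws four Gaussian blocks and queries $\bm{A},\bm{A}^\T$ once per column of each. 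I do not anticipate a real obstacle here: every piece is in place and the structure is the flat (non-recursive) analogue of the HSS proof. The only thing to be careful about is bookkeeping — confirming that the independence between the basis-defining sketches and the diagonal-defining sketches is genuinely used (it is, via the conditioning in \Cref{theorem:diagonal_ublr}), and that the index sets $R_i, C_j$ have been set up so that $\gamma_j,\rho_i$ reduce to the HSS block-rows/columns in the special case, which \Cref{section:ublr_notation} already checks.
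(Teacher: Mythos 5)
Your proposal matches the paper's argument: the paper likewise obtains \Cref{theorem:ublr_matvec} by directly combining \Cref{thm:simple_error_ublr}, \Cref{thm:range_finder_ublr}, and \Cref{theorem:diagonal_ublr} (with the independence of the fresh sketches supplying the conditional bound for the $\bm{D}_{i,j}$), with no recursion needed since the class is flat. Your verification of the constants and the $O(s_{\max}m+k)$ query count is consistent with the stated result.
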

\section{\rev{The existence of an optimal HSS approximation}}\label{appendix:existence}
\rev{Unlike HODLR matrices, where the different levels of the hierarchy are independent, HSS matrices impose \emph{nestedness} conditions across different levels. The nestedness conditions make it less clear why an \emph{optimal} approximation from the set $\HSS(L,k)$ should exist. We are not aware of any previous results that guarantee the existence of an optimal HSS approximation. Therefore, for completeness, we include a proof here. }
\begin{theorem}
    \rev{For any $\bm{A} \in \mathbb{R}^{2^{L+1}k \times 2^{L+1}k}$, there exists $\widetilde{\bm{B}} \in \HSS(L,k)$ so that $$\|\bm{A} - \widetilde{\bm{B}}\|_\F = \inf\limits_{\bm{B} \in \HSS(L,k)}\|\bm{A} - \bm{B}\|_\F.$$}
\end{theorem}
\begin{proof}
    \rev{The function $g(\bm{B}):= \|\bm{A} - \bm{B}\|_\F$ is a continuous and coercive function. To ensure the existence of a minimum it is therefore sufficient to prove that the set $\HSS(L,k)$ is closed \cite[Proposition A.8]{bertsekas1997nonlinear}. }
    
    \rev{Any $\bm{B} \in \HSS(L,k)$ admits a telescoping factorization with telescoping rank $k$. By \cite[Section 3.3]{CasulliKressnerRobol:2024} any matrix that admits a telescoping factorization with telescoping rank at most $k$ is also an HSS matrix with HSS rank at most $k$, and by \cite[Section 2.2]{MasseiRobolKressner:2020} a matrix is HSS if and only if all HSS block columns and rows, at every level, have rank at most $k$.}
    
    \rev{Enumerate the HSS block columns and rows from $1$ to $S$, where $S$ is the total number of HSS block columns and rows. Define the function $f: \mathbb{R}^{N \times N} \to \mathbb{R}^S_{\geq 0}$ so that $f(\bm{B})_i$ equals the $(k+1)^{\text{th}}$ singular value of the $i^{\text{th}}$ HSS block column/row. Note that by the stability of the singular values \cite[Problem 111.6.13]{bhatia}, $f$ is a vector-valued function whose components are all continuous, so $f$ is also continuous. }

    \rev{By the characterization above we have
    \begin{equation*}
        \HSS(L,k) = \{\bm{B} \in \mathbb{R}^{2^{L+1}k \times 2^{L+1}k} : f(\bm{A}) = \bm{0}\},
    \end{equation*}
    which is a closed set by the continuity of $f$.}
\end{proof}

\end{document}